\documentclass[11pt]{article}
\usepackage{amsmath,amsfonts}
\usepackage{verbatim}
\usepackage{latexsym}
\usepackage{graphicx}
\usepackage{float}
\usepackage{color}
\usepackage{cite}
\textheight 9.8in
\textwidth 6.6in
\oddsidemargin 0pt
\evensidemargin 0pt
\topmargin -0.9in
\parskip 3pt
\parindent 0.3in

\allowdisplaybreaks
\makeatletter
\@addtoreset{equation}{section}

\begin{document}

\newcommand{\E}{\mathbb{E}}
\newcommand{\PP}{\mathbb{P}}
\newcommand{\RR}{\mathbb{R}}

\newtheorem{thm}{Theorem}[section]
\newtheorem{cor}[thm]{Corollary}
\newtheorem{lem}[thm]{Lemma}
\newtheorem{prp}[thm]{Proposition}
\newtheorem{ass}[thm]{Assumption}
\newtheorem{rem}{Remark}
\newtheorem{prf}[thm]{proof}
\newtheorem{exa}[thm]{Example}
\newtheorem{defn}{Definition}[section]

\newcommand\tq{{\scriptstyle{3\over 4 }\scriptstyle}}
\newcommand\qua{{\scriptstyle{1\over 4 }\scriptstyle}}
\newcommand\hf{{\textstyle{1\over 2 }\displaystyle}}
\newcommand\hhf{{\scriptstyle{1\over 2 }\scriptstyle}}

\newcommand{\proof}{\noindent {\it Proof}. }
\newcommand{\eproof}{\hfill $\Box$} 

\def\tl{\tilde}
\def\trace{\hbox{\rm trace}}
\def\diag{\hbox{\rm diag}}
\def\for{\quad\hbox{for }}
\def\refer{\hangindent=0.3in\hangafter=1}

\newcommand\wD{\widehat{\D}}
\title{
\bf The truncated EM scheme for multiple-delay SDEs with irregular coefficients and application to stochastic volatility  model
}

\author{
{\bf Zhuoqi Liu${}^a$,  Zhaohang Wang${}^b$, Siying Sun${}^a$, Shuaibin Gao${}^a$}
\\
${}^a$ Department of Mathematics, \\
Shanghai Normal University, \\
Shanghai, 200234, China. \\
${}^b$ School of Mathematics and Statistics, \\
South-Central Minzu University, \\
Wuhan, 430074, China.\\
 }

\date{}

\maketitle

\begin{abstract}
This paper focuses on the numerical scheme for multiple-delay stochastic differential equations with partially H\"older continuous drifts and locally H\"older continuous diffusion coefficients. To handle with the superlinear terms in coefficients, the truncated Euler-Maruyama scheme is employed. Under the given conditions, the convergence rates at time $T$ in both $\mathcal{L}^{1}$ and $\mathcal{L}^{2}$ senses are shown by virtue of the Yamada-Watanabe approximation technique. Moreover,  the convergence rates over a finite time interval $[0,T]$ are also obtained. Additionally, it should be noted that the convergence rates will not be affected by the number of delay variables. Finally, we perform the numerical experiments on the stochastic volatility  model to verify the reliability of the theoretical results.
 \end{abstract}
\section{Introduction}\label{sec1}

Due to their widespread applications, stochastic differential equations (SDEs) are attractive to researchers in recent decades \cite{1,2,20}. 
The numerical solutions approximated by Euler-Maruyama (EM) schemes have been widely investigated, since plenty of SDEs cannot been solved explicitly \cite{50}.
However, \cite{13} demonstrated that under superlinear growing condition, the $p$th moment of classical EM approximation will diverge to infinity in a finite time for $p\geq1$.
Thence, some implicit schemes have been developed to approximate SDEs with superlinear coefficients \cite{52,25,51}. Owing to the advantages of explicit schemes, several kinds of modified EM schemes also have been proposed to solve such SDEs, such as truncated EM scheme (TEMS) \cite{15,22,23}, tamed EM scheme \cite{14,26}, projected EM scheme \cite{57}, multilevel EM scheme \cite{56}, stopped EM scheme \cite{55} and so on.

Moreover, the Cox-Ingersoll-Ross (CIR) models have been rapidly developed in recent decades, since they are widely used in biology and finance \cite{xin20,xin22,xin24,xin21,xin23}. 
As is well known,  the characteristic of CIR model is that  the diffusion coefficients are H\"older continuous.
And the researches on numerical schemes for SDEs with H\"older continuous diffusion coefficients (SDEwHCDC) include the following papers:
the EM approximations for SDEwHCDC were established in \cite{xin18,xin1,58,xin19};
the strong convergence rate of the tamed EM scheme for SDEwHCDC and superlinear drift coefficients was studied in \cite{28};
the strong convergence rates of the TEMS for SDEwHCDC and superlinear drift coefficients were analyzed in \cite{31,29}.
When the diffusion coefficients are locally H\"older continuous, the tamed EM schemes \cite{7,16}  and tamed-adaptive EM scheme \cite{xin32} were discussed for superlinear SDEs. 
As for McKean-Vlasov SDEwHCDC, the numerical schemes were shown in \cite{xin12,xin31,xin11}.

Before the description of stochastic volatility  model in Section 6, let us focus on the following financial model called risk-neutral process in \cite{60}:
\begin{equation}\label{VIX1}
	dV(t)=\big(c_{4}V(t)+c_{5}V^{2}(t)-\lambda^{*}kV^\frac{3}{2}(t)\big)dt+kV^\frac{3}{2}(t)dB(t),
\end{equation}
where $B(t)$ is a  Brownian motion, $V(t)$ is the volatility and  $\lambda^{*}$ is the risk related to the Volatility Index (VIX), which can minimize the error between market and model prices. Here, assume that $c_4, c_5, k$ are constants.
The characteristic of (\ref{VIX1}) is that the diffusion coefficient and a part of the drift coefficient are both locally H\"older continuous.
However, we have not found result about numerical scheme for such SDEs yet.
To facilitate research, we rewrite (\ref{VIX1}) as:
\begin{equation}\label{vv2}
	\begin{split}
		dz(t)=&\big(a_1z(t)|z(t)|+a_2z^{3}(t)+a_3z(t)+a_4z(t-\tau_1)
		+a_5z(t-\tau_2)+a_6z(t)|z(t)|^\frac{1}{2}\big)dt+a_7|z(t)|^\frac{3}{2}dB(t),
	\end{split}
\end{equation}
with the initial data $\xi$, where the influence of time delay is taken into account.
Here, $\tau_1,\tau_2$ are constant delays and $a_{j}$ are constants for $j=1,2,\cdots,7$.
Obviously, the main characteristic of (\ref{VIX1}) has been preserved.
It is crucial and significative to analyze the numerical scheme for such SDEs.
On the other hand, the systems with multiple delays appear in \cite{xin13,xin14,xin15,xin16} and we cannot ignore the effect of multiple delays on the systems.  
As for the results about numerical schemes for SDEs with time delay, we refer the readers to \cite{3,xin17,5,6,9,18,53,x00xinx2,x00xinx3} for more details.

Based on the above discussions, the aim of this paper is to investigate the numerical scheme for the general form of multiple-delay SDEs (MDSDEs) with irregular coefficients, in which (\ref{vv2}) is contained. The contributions are stated as follows.
\begin{itemize}
	\item[$\bullet$] The TEMS for superlinear MDSDEs with partially H\"older continuous drifts and locally H\"older continuous diffusion coefficients is established.

	\item[$\bullet$] The convergence rates at time $T$ in both $\mathcal{L}^{1}$ and $ \mathcal{L}^{2}$ sense are given.

	\item[$\bullet$]  The convergence rates over a finite time interval $[0,T]$ in both $\mathcal{L}^{1}$ and $ \mathcal{L}^{2}$ sense are also revealed, which plays a vital role in approximating the European barrier option value \cite{xin88}.
	
	\item[$\bullet$] The numerical simulation for the stochastic volatility  model  is presented. Moreover, the conclusion that the convergence rates will not be affected by the number of delay variables is also verified.
\end{itemize}


%

This paper is organized as follows.  In section 2, we introduce some necessary notations, assumptions and the Yamada-Watanabe approximation technique. In section 3, the TEMS for MDSDE is established, and the boundedness of numerical solution is shown. Section 4 gives the convergence rates of the TEMS at time $T$ in $\mathcal{L}^{1} $and $\mathcal{L}^{2}$ sense. Section 5 demonstrates the convergence rates over a finite time interval $[0,T]$. In Section 6, we use the numerical experiment to validate the  feasibility of the theoretical results.


\section{Preliminaries}
Let $|\cdot |$ denote the Euclidean norm of vectors in $\mathbb{R}$. 
For real numbers $a$, $b$, let $a\wedge b=\min\{a,b\}$ and $a\vee b=\max\{a,b\}$, and denote $\lfloor a\rfloor$ the integer part of $a$. 
For a set $H$, define the indicator function $\mathbb{I}_H$: 
$\mathbb{I}_H(x) = 1$ if $x \in H$ and $\mathbb{I}_H(x) = 0$ if $x \notin H$.
Let $\big ( \Omega, \mathcal{F}, \mathbb{P} \big )$ stand for a complete probability space with a filtration  $\{\mathcal{F}_t\}_{t\in[0 , T]}$ satisfying the usual conditions and $\mathbb{E}$ be the probability expectation with respect to (w.r.t.) $\mathbb{P}$. 
Let $\tau$ be the time delay constant. Let $\mathcal{C}=\mathcal{C} ([-\tau,0];\mathbb{R})$ be the family of all continuous functions from  $[-\tau,0]$  to   $\mathbb{R}$, endowed with the  norm $||\xi||:=\sup_{-\tau\leq \theta\leq 0}|\xi(\theta)|$. 
Let $\mathcal{L}^q=\mathcal{L}^q\big ( \Omega, \mathcal{F}, \mathbb{P} \big )$ 
be the space of random variables $X$ with $\mathbb{E}|X|^q<\infty$ for $q \geq 1$.

Consider the superlinear MDSDEs with irregular coefficients of the form:
\begin{equation}\label{msde}
	dz(t)=\alpha\left( z(t), z(t-\tau_2),\cdots,z(t-\tau_r)\right)dt
	+\beta\left(z(t)\right) dB(t),
\end{equation}
with the initial data
$\xi=\{ \xi( \theta ) : - \tau \leq \theta \leq 0 \}\in \mathcal{C},$
where
$\alpha :(\mathbb{R})^r \rightarrow \mathbb{R}$ and  $\beta :\mathbb{R} \rightarrow \mathbb{R}$ are measurable. Here, $B(t)$ is an one-dimensional Brownian motion and $\tau_2, \tau_3, \cdots, \tau_r$ are all delay constants. 

The segment process $z_{t}$ is defined as $z_{t}=\{z(t+\theta):-\tau\leq\theta\leq0\}$.
Next, the projection operator is presented to make the prove process more simple. Define projection operator
$\varPhi_{\theta}(\zeta)=\zeta(\theta)$
for $\theta\in[-\tau,0] $ and $\zeta \in \mathcal{C}$.

For $v\in\mathbb{S}_{r}:=\{1,2,...,r\}$, we arrange
the sequence $\{\tau_{v}\}$ to make it be a non-negative and increasing sequence and let $\tau_{1}=0, \tau_r=\tau$. 
In order not to cause ambiguity, we make a sequence $\{\bar{s}_{v}\}$ satisfying $\tau_{v}=-\bar{s}_{v}$ for  $v\in\mathbb{S}_{r}$. 
Hence, $\bar{s}_{1}=-\tau_{1}=0, \bar{s}_{r}=-\tau_r=-\tau$. 
Then for $\zeta\in\mathcal{C}$, define $$\varPhi(\zeta)=\Big(\varPhi_{\bar{s}_{1}}(\zeta),\varPhi_{\bar{s}_{2}}(\zeta),\cdots,\varPhi_{\bar{s}_{r}}(\zeta)\Big).$$
Referring to the notations above leads to
\begin{equation*}
	\begin{split}
		\varPhi(z_t)=&\Big(\varPhi_{\bar{s}_{1}}(z_t),\varPhi_{\bar{s}_{2}}(z_t),\cdots,\varPhi_{\bar{s}_{r}}(z_t)\Big)
		=\Big(z_t(\bar{s}_{1}),z_t(\bar{s}_{2}),\cdots,z_t(\bar{s}_{r})\Big)\\
		=&\Big(z(t+\bar{s}_{1}),z(t+\bar{s}_{2}),\cdots,z(t+\bar{s}_{r})\Big)
		=\Big(z(t),z(t-\tau_{2}),\cdots,z(t-\tau)\Big).
	\end{split}
\end{equation*}
Then the target equation (\ref{msde}) becomes to
\begin{equation}\label{msdech}
	dz(t)=\alpha\left( \varPhi(z_t)\right)dt
	+\beta\left(z(t)\right) dB(t),
\end{equation}
with the initial data $\xi$, where
$\alpha :(\mathbb{R})^r \rightarrow \mathbb{R}$ and  $\beta :\mathbb{R} \rightarrow \mathbb{R}$ are measurable.
We need to impose some conditions on the coefficients. Denote $m^{(r)}=\left(m_1,m_2,\cdots,m_r\right)$, $n^{(r)}=\left(n_1,n_2,\cdots,n_r\right)$ for $m_i$, $n_i$ $\in\mathbb{R}$, $i\in\mathbb{S}_r$.	
Suppose that the drift coefficient can be decomposed as $\alpha(m^{(r)})=\alpha_{1}(m_{1})+\alpha_{2}(m_1)+\alpha_{3}(m^{(r)})$. 
\begin{ass}\label{a1}
	There exist constants $L_1>0$, $l_{1}\geq1$ and $\eta\in(0,1)$ such that
	$$\left|\alpha_{1}(m_{1})-\alpha_{1}(n_{1})\right|	\leq L_{1}\left(1+|m_1|^{l_{1}}+|n_{1}|^{l_{1}}\right)|m_{1}-n_{1}|^\eta,$$
	$$(m_1-n_{1}) \left( \alpha_{2}(m_{1})-\alpha_{2}(n_{1})\right) 
	\leq L_{1}|m_1-n_{1}|^2,$$
	$$\left|\alpha_{3}(m^{(r)})-\alpha_{3}(n^{(r)})\right|	\leq L_{1}\sum_{i=1}^{r}|m_i-n_{i}|,$$	
	for any $m_i, n_i\in \mathbb{R}$,  $i\in\mathbb{S}_r$. Here, $\alpha_{1}(\cdot)$ is a continuous and non-increasing function.
\end{ass}

\begin{ass} \label{a2}
	There exist constants $\bar{L}_2>0$ and $\check{p}\geq2$ such that
	$$m_1\big( \alpha_{1}(m_{1})+\alpha_{2}(m_1)\big)+\frac{\check{p}-1}{2}\left| \beta(m_1)\right|^2 
	\leq \bar{L}_{2}(1+|m_1|^2),$$
	for any $m_i\in \mathbb{R}$, $i\in\mathbb{S}_r$.
\end{ass}

By Assumption \ref{a2}, we derive that there exists a constant $L_{2}>0$ such that  
$$m_1\alpha(m^{(r)})+\frac{\check{p}-1}{2}\left| \beta(m_1)\right|^2 
\leq L_{2}(1+\sum_{i=1}^{r}|m_i|^2),$$
for any $m_i\in \mathbb{R}$, $i\in\mathbb{S}_r$.
\begin{ass}\label{a4}
	There exist constants $L_3>0$, $\sigma\in[\frac{1}{2},1)$ and $l_{2}, l_{3}\geq1$ such that
	$$\left|\alpha_{2}(m_{1})-\alpha_{2}(n_{1})\right|	\leq L_{3}\left(1+|m_1|^{l_{2}}+|n_{1}|^{l_{2}}\right)|m_1-n_{1}|,$$
	$$\left|\beta(m_1)-\beta(n_{1})\right|\leq L_{3}\left(1+|m_1|^{l_3}+|n_{1}|^{l_3}\right)|m_1-n_{1}|^\sigma,$$
	for any $m_i, n_i\in \mathbb{R}$, $i\in\mathbb{S}_r$.
\end{ass}

\begin{ass}\label{a3}
	There exist constants $L_4>0$ and $\gamma\in(0,1]$ such that
	\begin{equation*}
		\left|\xi(t)-\xi(s)\right|\leq L_{4}\left|t-s\right|^\gamma,
	\end{equation*}
	for any $t, s\in[-\tau,0]$.
\end{ass}

Under the given assumptions above, MDSDE (\ref{msdech}) admits a unique solution, which can be proved by consulting \cite{9}, \cite{10} and \cite{16}. The result is stated as the following lemma.

\begin{lem}\label{lem3.1}
	Let Assumptions \ref{a1}-\ref{a4} hold. Then MDSDE (\ref{msdech}) admits a unique solution z(t). Moreover, for any $p\in(0,\check{p}]$,
	$$\sup_{0\leq t\leq T}\mathbb{E}|z(t)|^p\leq C,~~~~\forall T>0.$$
\end{lem}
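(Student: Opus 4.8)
The natural route is the method of steps combined with existence--uniqueness and moment bounds for ordinary (non-delay) SDEs whose coefficients are superlinear, with a locally H\"older diffusion and a drift splitting into a monotone H\"older piece $\alpha_1$, a one-sided Lipschitz piece $\alpha_2$, and a globally Lipschitz piece $\alpha_3$. \textbf{Step 1 (reduction by steps).} Since $0=\tau_1<\tau_2\le\cdots\le\tau_r=\tau$, on $[0,\tau_2]$ one has $t-\tau_i\le 0$ for all $i\ge 2$, so $\varPhi(z_t)$ involves only $z(t)$ and the known initial segment $\xi$, which is H\"older continuous by Assumption \ref{a3}. Hence on $[0,\tau_2]$ equation (\ref{msdech}) is a non-delay SDE
$$dz(t)=\big(\alpha_1(z(t))+\alpha_2(z(t))+\alpha_3(z(t),\xi(t-\tau_2),\dots,\xi(t-\tau_r))\big)\,dt+\beta(z(t))\,dB(t),\qquad z(0)=\xi(0),$$
with a prescribed continuous forcing term (by Assumption \ref{a1} the map $\alpha_3$ is Lipschitz in $z(t)$ with the remaining slots frozen). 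Solving this on $[0,\tau_2]$, using $z_{\tau_2}$ as the new initial segment on $[\tau_2,2\tau_2]$, and iterating, yields a solution on every $[0,T]$ once the one-step problem is well posed.

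\textbf{Step 2 (one-step well-posedness).} Pathwise uniqueness for the one-step SDE follows from the Yamada--Watanabe approximation technique: localizing by $\theta_R=\inf\{t:|z(t)|\vee|\tilde z(t)|\ge R\}$ to neutralize the superlinear growth, the diffusion part, being $\sigma$-H\"older with $\sigma\ge\frac12$, contributes an error that vanishes in the $\phi_n$-approximation since $\int_{0^+}u^{-2\sigma}\,du=\infty$; the drift is harmless because $\alpha_1$ is non-increasing (its contribution to the comparison estimate is $\le 0$), $\alpha_2$ is one-sided Lipschitz, and $\alpha_3$ is Lipschitz, so Gr\"onwall closes the estimate. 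For strong existence, truncate $\alpha_j,\beta$ outside $\{|x|\le n\}$ to obtain bounded continuous coefficients; Skorokhod's theorem produces a weak solution, which by the pathwise uniqueness just established is a unique strong solution $z^{(n)}$; the uniform-in-$n$ moment bound of Step 3 forces the exit times $\zeta_n=\inf\{t:|z^{(n)}(t)|\ge n\}$ to diverge, and the $z^{(n)}$ agree on $[0,\zeta_n\wedge\zeta_m]$, so they patch into a global strong solution (one may instead quote \cite{9,10,16} here).

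\textbf{Step 3 (moment bound).} Fix $T>0$, $R>0$ and $\theta_R=\inf\{t\ge0:|z(t)|\ge R\}$. Since $\check p\ge2$, It\^o's formula applied to $|z(t\wedge\theta_R)|^{\check p}$ and taking expectations gives
$$\mathbb{E}|z(t\wedge\theta_R)|^{\check p}\le|\xi(0)|^{\check p}+\check p\,\mathbb{E}\int_0^{t\wedge\theta_R}|z(s)|^{\check p-2}\Big(z(s)\alpha(\varPhi(z_s))+\tfrac{\check p-1}{2}|\beta(z(s))|^2\Big)\,ds.$$
By the consequence of Assumption \ref{a2} recorded after it, the integrand is bounded by $\check p L_2|z(s)|^{\check p-2}\big(1+\sum_{i=1}^r|z(s-\tau_i)|^2\big)$, and Young's inequality further bounds this by $C\big(1+|z(s)|^{\check p}+\sum_{i=2}^r|z(s-\tau_i)|^{\check p}\big)$. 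Each delayed term is controlled by $\sup_{0\le u\le s}\mathbb{E}|z(u\wedge\theta_R)|^{\check p}$ when $s-\tau_i\ge0$ and by $\|\xi\|^{\check p}$ otherwise; writing $g(t)=\sup_{0\le s\le t}\mathbb{E}|z(s\wedge\theta_R)|^{\check p}$ we get $g(t)\le C_1+C_2\int_0^tg(s)\,ds$ with $C_1,C_2$ independent of $R$, so Gr\"onwall yields $g(t)\le C_1e^{C_2T}$ and Fatou's lemma as $R\to\infty$ gives $\sup_{0\le t\le T}\mathbb{E}|z(t)|^{\check p}\le C$. For $0<p<\check p$ the claim follows from Lyapunov's inequality $\mathbb{E}|z(t)|^p\le(\mathbb{E}|z(t)|^{\check p})^{p/\check p}$.

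\textbf{Main obstacle.} The only genuinely delicate point is the existence half of the one-step well-posedness: uniqueness is essentially Yamada--Watanabe plus the monotonicity of $\alpha_1$, but constructing a solution when superlinear growth, a merely $\sigma$-H\"older diffusion and a non-Lipschitz drift $\alpha_1$ occur simultaneously requires the truncation-plus-a-priori-estimate scheme above, whose success hinges on the moment estimates of Step 3 being uniform in the truncation level so that the localizing exit times indeed diverge.
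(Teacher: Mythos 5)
Your proposal is correct and takes essentially the route the paper intends: the paper gives no detailed proof of this lemma, deferring to \cite{9}, \cite{10} and \cite{16}, and your combination of the method of steps, Yamada--Watanabe pathwise uniqueness (using the monotonicity of $\alpha_{1}$ and $\sigma\geq\frac{1}{2}$), a truncation/localization construction of the strong solution, and the localized It\^o--Gr\"onwall--Fatou moment estimate is precisely the standard argument those references carry out and is consistent with the techniques the paper itself uses in Lemmas \ref{eseb}, \ref{lem3.3} and \ref{lemma4.2}. I see no gaps.
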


\begin{lem}\label{eseb}
	Let Assumptions \ref{a1}-\ref{a4} hold. Then for any $\bar{p}\in\left(0,\frac{{p}}{l_{3}+1}\right]$, the solution $z(t)$ of MDSDE (\ref{msdech}) satisfies
	$$\mathbb{E}\sup_{0\leq t\leq T}|z(t)|^{\bar{p}}\leq C,~~~~\forall T>0.$$
\end{lem}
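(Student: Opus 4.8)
The plan is to apply It\^o's formula to the smooth surrogate $V(x):=(1+|x|^2)^{\bar p/2}$ (which is $C^2$ on $\mathbb{R}$ even when $\bar p<2$), to absorb the superlinear part of the drift through the coercivity-type estimate stated just after Assumption \ref{a2}, and then to dominate the resulting martingale by a Burkholder--Davis--Gundy (BDG) estimate coupled with Young's inequality. After a routine localization (stopping when $|z|$ first reaches level $n$, and letting $n\to\infty$ by Fatou at the end) we may treat $\mathbb{E}\sup_{0\le t\le T}V(z(t))$ as finite; and by Jensen's inequality it suffices to prove the bound for $\bar p\ge2$, taking $p=\bar p(l_3+1)\le\check p$, so that Lemma \ref{lem3.1} supplies $\sup_{0\le t\le T}\mathbb{E}|z(t)|^{\bar p}\le C$. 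For the delayed arguments $z(s-\tau_i)$ we invoke Lemma \ref{lem3.1} when $s-\tau_i\in[0,T]$ and $\|\xi\|<\infty$ (Assumption \ref{a3}) when $s-\tau_i\in[-\tau,0]$. Below, $C$ denotes a generic constant, changing from line to line.

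\emph{Step 1: It\^o's formula, cancellation of the diffusion terms, and the drift integral.} With $V'(x)=\bar p x(1+|x|^2)^{\bar p/2-1}$ and $V''(x)=\bar p(1+|x|^2)^{\bar p/2-1}+\bar p(\bar p-2)|x|^2(1+|x|^2)^{\bar p/2-2}$, It\^o's formula applied to $V(z(t))$ yields the drift $V'(z(s))\alpha(\varPhi(z_s))+\tfrac12V''(z(s))|\beta(z(s))|^2$ and the martingale $M(t):=\int_0^tV'(z(s))\beta(z(s))\,dB(s)$. Substituting the inequality $m_1\alpha(m^{(r)})+\frac{\check p-1}{2}|\beta(m_1)|^2\le L_2(1+\sum_{i=1}^{r}|m_i|^2)$ (stated right after Assumption \ref{a2}) with $m_1=z(s)$ and $m_i=z(s-\tau_i)$ into $V'(z(s))\alpha(\varPhi(z_s))$ and collecting the $|\beta(z(s))|^2$-contributions, the coefficient of $(1+|z(s)|^2)^{\bar p/2-1}|\beta(z(s))|^2$ equals $\tfrac{\bar p}{2}\big[(2-\check p)+(\bar p-2)\tfrac{|z(s)|^2}{1+|z(s)|^2}\big]\le0$ since $2\le\bar p\le\check p$; hence those terms disappear and
\begin{equation*}
V(z(t))\le V(\xi(0))+C\int_0^t(1+|z(s)|^2)^{\bar p/2-1}\Big(1+\sum_{i=1}^{r}|z(s-\tau_i)|^2\Big)\,ds+M(t).
\end{equation*}
By Young's inequality the integrand here is at most $C\big(1+|z(s)|^{\bar p}+\sum_{i=1}^{r}|z(s-\tau_i)|^{\bar p}\big)$, so Lemma \ref{lem3.1} and $\|\xi\|<\infty$ give $\mathbb{E}\int_0^T(1+|z(s)|^2)^{\bar p/2-1}(1+\sum_{i=1}^{r}|z(s-\tau_i)|^2)\,ds\le C$.

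\emph{Step 2: the martingale.} Taking $\sup_{0\le t\le T}$, then expectations, and applying BDG to $M$,
\begin{equation*}
\mathbb{E}\sup_{0\le t\le T}V(z(t))\le V(\xi(0))+C+C\,\mathbb{E}\Big(\int_0^T|V'(z(s))|^2|\beta(z(s))|^2\,ds\Big)^{1/2}.
\end{equation*}
Since $|V'(x)|^2\le\bar p^2(1+|x|^2)^{\bar p-1}$ and, by Assumption \ref{a4} with $n_1=0$, $|\beta(x)|^2\le C(1+|x|^{2(l_3+\sigma)})\le C(1+|x|^2)^{l_3+\sigma}$, one gets $|V'(z(s))|^2|\beta(z(s))|^2\le C\,V(z(s))\,(1+|z(s)|^2)^{\bar p/2-1+l_3+\sigma}$, hence $\int_0^T|V'\beta|^2\,ds\le C\big(\sup_{0\le s\le T}V(z(s))\big)\int_0^T(1+|z(s)|^2)^{\bar p/2-1+l_3+\sigma}\,ds$. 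Applying $\sqrt{ab}\le\varepsilon a+\tfrac1{4\varepsilon}b$ with $\varepsilon$ small and moving $\varepsilon\,C\,\mathbb{E}\sup_{0\le t\le T}V(z(t))$ to the left-hand side,
\begin{equation*}
\tfrac12\mathbb{E}\sup_{0\le t\le T}V(z(t))\le V(\xi(0))+C+C\,\mathbb{E}\int_0^T(1+|z(s)|^2)^{\bar p/2-1+l_3+\sigma}\,ds.
\end{equation*}
Because $\sigma<1$ and $\bar p\ge2$ we have $2\big(\tfrac{\bar p}{2}-1+l_3+\sigma\big)=\bar p-2+2l_3+2\sigma<\bar p+2l_3\le\bar p(l_3+1)\le p$, so Lemma \ref{lem3.1} bounds the last integral by $C$. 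Therefore $\mathbb{E}\sup_{0\le t\le T}V(z(t))\le C$, and the assertion follows from $|z(t)|^{\bar p}\le V(z(t))$.

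\emph{Main obstacle.} The delicate point is Step 2. The diffusion coefficient grows like $|x|^{l_3+\sigma}$, so after BDG the quadratic-variation integrand carries the large polynomial exponent $\bar p-2+2(l_3+\sigma)$; the estimate closes only because one can peel off exactly one factor $V(z(s))=(1+|z(s)|^2)^{\bar p/2}$ so that Young's inequality reabsorbs $\mathbb{E}\sup V$ and leaves a moment of order $\bar p-2+2l_3+2\sigma$, which stays below $p$ precisely thanks to $\bar p\le p/(l_3+1)$ (and $\sigma<1$). The drift, by contrast, is harmless: the coercivity estimate after Assumption \ref{a2} both tames its superlinear part and, by virtue of $\bar p\le\check p$, annihilates the second-order diffusion terms in the It\^o expansion.
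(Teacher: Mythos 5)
Your proof is correct and follows essentially the same route as the paper: It\^o's formula on a $\bar p$-th power functional, the coercivity inequality derived from Assumption \ref{a2} to absorb both the drift and the second-order It\^o term, and a BDG--Young argument that peels off one factor of $\sup_t|z(t)|^{\bar p}$ so that the remaining moment of order $\bar p-2+2l_3+2\sigma\le\bar p(l_3+1)\le p$ is controlled by Lemma \ref{lem3.1}. The only (harmless) deviations are your use of the smooth surrogate $(1+|x|^2)^{\bar p/2}$ and your direct bound on the drift integral via $\sup_s\mathbb{E}|z(s)|^{\bar p}\le C$ in place of the paper's Gronwall step.
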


\begin{proof}
	For any $0\leq t\leq T$ and $\bar{p}\geq 2$, using It\^o's formula, Young's inequality, Burkhold-Davis-Gundy's (BDG) inequality and H\"older's inequality gives that
	\begin{equation*}
		\begin{split}
			\mathbb{E}\sup_{0\leq s\leq t}|z(s)|^{\bar{p}}
			\leq&\mathbb{E}\Big[||\xi||^{\bar{p}} +\sup_{0\leq s\leq t}\bar{p}\int_{0}^{s}|z(u)|^{\bar{p}-2}[z(u)\alpha(\varPhi(z_u))+\frac{\bar{p}-1}{2}|\beta(z(u))|^2]du\Big]\\
			&+\bar{p}\mathbb{E}\sup_{0\leq s\leq t}\int_{0}^{s}|z(u)|^{\bar{p}-2} z(u)\beta(z(u))dB(u)\\
			\leq&\mathbb{E}||\xi||^{\bar{p}}+C\mathbb{E}\int_{0}^{t}\big(1+\sum_{v=1}^{r}|z(u+\bar{s}_v)|^{\bar{p}}\big)du
			+C\mathbb{E}\big(\int_{0}^{t}|z(u)|^{2\bar{p}-2}|\beta(z(u))|^2du\big)^\frac{1}{2}\\
			\leq&C+C\int_{0}^{t}\mathbb{E}\sup_{0\leq u\leq s}|z(u)|^{\bar{p}}  du
			+\frac{1}{2}\mathbb{E}\sup_{0\leq s\leq t}|z(s)|^{\bar{p}}+C\int_{0}^{t}\sup_{0\leq u\leq s}\mathbb{E}|z(u)|^{\bar{p}(l_3+1)}ds.
		\end{split}
	\end{equation*}
	The desired result follows by Gronwall's inequality, the condition $\bar{p}({l_{3}+1})\leq p$ and H\"older's inequality.
\end{proof}

Let us review the Yamada-Watanabe approximation technique in \cite{xin1, xin2}.
For any  $\Theta>1$ and $\varepsilon\in (0,1)$, we see $\int_{\frac{\varepsilon}{\Theta}}^{\varepsilon}\frac{1}{x}dx=\ln\Theta.$
For each $\Theta>1$ and $\varepsilon\in (0,1)$, there is a continuous function $\varPsi: \mathbb{R}^{+}\rightarrow\mathbb{R}^{+}$ with support $[\frac{\varepsilon}{\Theta},\varepsilon]$ such that 
$$0\leq\varPsi_{\Theta,\varepsilon}(x)\leq\frac{2}{x\ln\Theta},~~~
\int_{\frac{\varepsilon}{\Theta}}^{\varepsilon}\varPsi_{\Theta,\varepsilon}(x)dx=1.$$
Define $$U_{\Theta,\varepsilon}(x)=\int_{0}^{|x|}\int_{0}^{y}\varPsi_{\Theta,\varepsilon}(u)dudy.$$
For any  $x\in\mathbb{R}$, $U_{\Theta,\varepsilon}(x)$ has the following properties:
\begin{equation}
	U'_{\Theta,\varepsilon}(|x|)\leq\frac{\Theta}{\varepsilon}|x|,
\end{equation}
\begin{equation} \label{ym4.2}
	0\leq|U'_{\Theta,\varepsilon}(x)|\leq1,
\end{equation}
\begin{equation}\label{ym4.3}
	|x|\leq\varepsilon+U_{\Theta,\varepsilon}(x),
\end{equation}
\begin{equation}\label{ym4.4}
	0\leq U''_{\Theta,\varepsilon}(x)=\varPsi_{\Theta,\varepsilon}(|x|)\leq\frac{2}{|x|\cdot\ln\Theta}\mathbb{I}_{[\frac{\varepsilon}{\Theta},\varepsilon]}(|x|).
\end{equation}
Here, $U'$ and $U''$ are the first and the second order derivatives of $U$ w.r.t. $x$.

\section{The numerical scheme}
Now we review the TEMS which was originated in \cite{22,23}. The first step of defining the TEMS is to choose a strictly increasing continuous function $h: \mathbb{R}^{+}\rightarrow\mathbb{R}^{+}$ such that $h(w)\rightarrow\infty$ as $w\rightarrow\infty$ and
\begin{equation}
	\label{tru}\sup_{|m_{1}|\leq w}\left(|\alpha_{1}(m_{1})|\vee|\alpha_{2}(m_{1})|\vee|\beta(m_1)|\right)\leq h(w),~~~\forall w\geq1.
\end{equation}
Denote by  $h^{-1}$ the inverse function of $h$ and we see that $h^{-1}$ is a strictly increasing continuous function from $[h(1),\infty)$ to $[1,\infty)$. We also choose a strictly decreasing function $\Gamma:(0,1]\rightarrow(0,\infty)$ such that
\begin{equation}\label{truncated}
	\lim_{\Delta\rightarrow0}\Gamma(\Delta)=\infty,~~~\Delta^\frac{1}{4}\Gamma(\Delta)\leq L_0,~~~\forall\Delta\in(0,1].
\end{equation}

\begin{rem}
	The constraint on the step size $\Delta$ in ($\ref{truncated}$) is weaker than that in \cite{31,29}, so that in our paper the value interval of $\Delta$ is broader, which could reduce calculating amount. And we have more choices for the function $\Gamma(\Delta)$. Please refer to \cite{12} for more details.
\end{rem}

The truncated mapping $\varpi_{\Delta}: \mathbb{R}\rightarrow \mathbb{R}$ is defined by 
$$\varpi_{\Delta}(m_1)=\Big(|m_1|\wedge h^{-1}(\Gamma(\Delta))\Big)\frac{m_1}{|m_1|},$$ 
where set $\frac{m_1}{|m_1|}=0$ if $m_1=0$. Then denote 
$$\alpha_{1,\Delta}(m_{1})=\alpha_{1}(\varpi_{\Delta}(m_{1})),~\alpha_{2,\Delta}(m_{1})=\alpha_{2}(\varpi_{\Delta}(m_{1})),
~\beta_{\Delta}(m_1)=\beta(\varpi_{\Delta}(m_{1})).$$
Obviously, one can see that $|\alpha_{1,\Delta}(m_{1})|\vee|\alpha_{2,\Delta}(m_{1})|\vee|\beta_{\Delta}(m_1)|\leq \Gamma(\Delta).$
Moreover, let	$\alpha_{\Delta}(m^{(r)})=\alpha_{1,\Delta}(m_{1})+\alpha_{2,\Delta}(m_1)+\alpha_{3}(m^{(r)}).$

	\begin{rem}
		In reality, we use the partially TEMS in this paper. Instead of truncating the whole $\alpha$, we decompose $\alpha$ as $\alpha_1+\alpha_2+\alpha_3$, and just truncate  the superlinear parts (i.e., the linear part is reserved), which can improve the computational efficiency and preserve the	asymptotic stability of the underlying equations.
		Please refer to \cite{8} for more details . 
		
	\end{rem}

	Now, we introduce the TEMS for MDSDE. For two positive integers $M,M_T$ and a positive integer sequence $\{k_{v}\}$, $v\in\{2,3,\cdots,r-1\}$, define $\Delta=\tau_{v}/k_{v}=T/M_{T}=\tau/M$. Define $t_{k}=k\Delta$ for $k\in\big\{-M,\cdots, M_{T}\big\}$. Set $\chi_{\Delta}(t_{k})=\xi(t_{k})$ for $k\in\big\{-M,\cdots,0\big\}$. Then we form
	$$\chi_{\Delta}(t_{k+1})=\chi_{\Delta}(t_{k})+\alpha_{\Delta}(\varPhi(\chi_{\Delta,t_k}))\Delta+\beta_{\Delta}(\chi_{\Delta}(t_k))\Delta B_k,$$
	where $\varPhi(\chi_{\Delta,t_k}):=(\chi_{\Delta}(t_k),\chi_{\Delta}(t_{k-k_2}),\cdots,\chi_{\Delta}(t_{k-M}))$, and $\Delta B_k:=B(t_{k+1})-B(t_{k})$. Define the step process which is one of the two versions of the continuous-time truncated EM solutions:
	$$\bar{\chi}(t)=\sum_{k=-M}^{M_{T}}\chi_{\Delta}(t_k)\mathbb{I}_{[t_k,t_{k+1})}.$$
	The other continuous form is:
	\begin{equation}\label{continuous}
		\chi(t)=\chi(0)+\int_{0}^{t}\alpha_{\Delta}(\varPhi(\bar{\chi}_s))ds+\int_{0}^{t}\beta_{\Delta}(\bar{\chi}(s))dB(s),
	\end{equation}
	where $\varPhi(\bar{\chi}_s):=(\bar{\chi}(s),\bar{\chi}(s-\tau_{2}),\cdots,\bar{\chi}(s-\tau)).$
	It can be observed that, for any $t\in[t_k,t_{k+1})$, $\chi(t_k)=\chi_{\Delta}(t_k)=\bar{\chi}(t).$

	
	The following estimation is obtained by consulting Lemma 2.4 in \cite{9} and handling the multiple delay terms.
	\begin{lem}\label{lem2.2}
		Let Assumption \ref{a2} hold. For any $\Delta\in(0,1]$ and $m_i\in\mathbb{R}$, $i\in\mathbb{S}_r$, we derive that
				$$m_1\alpha_{\Delta}(m^{(r)})+\frac{\check{p}-1}{2}|\beta_{\Delta}(m_1)|^2
				\leq \frac{5}{2}L_{2}(1+|m_1|^2)+\frac{L_{2}(r-1)}{2}\sum_{i=2}^{r}|m_i|^2.$$

	\end{lem}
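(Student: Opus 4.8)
The plan is to start from the definition $\alpha_{\Delta}(m^{(r)})=\alpha_{1,\Delta}(m_1)+\alpha_{2,\Delta}(m_1)+\alpha_3(m^{(r)})$ and exploit that the truncated maps are evaluated at $\varpi_{\Delta}(m_1)$, which always lies in $\mathbb{R}$; hence Assumption \ref{a2} applies verbatim with $m_1$ replaced by $\varpi_{\Delta}(m_1)$. First I would write
\begin{equation*}
  m_1\bigl(\alpha_{1,\Delta}(m_1)+\alpha_{2,\Delta}(m_1)\bigr)
  =\varpi_{\Delta}(m_1)\bigl(\alpha_{1}(\varpi_{\Delta}(m_1))+\alpha_{2}(\varpi_{\Delta}(m_1))\bigr)
  +\bigl(m_1-\varpi_{\Delta}(m_1)\bigr)\bigl(\alpha_{1,\Delta}(m_1)+\alpha_{2,\Delta}(m_1)\bigr),
\end{equation*}
and similarly keep $\tfrac{\check p-1}{2}|\beta_{\Delta}(m_1)|^2=\tfrac{\check p-1}{2}|\beta(\varpi_{\Delta}(m_1))|^2$ untouched. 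Applying Assumption \ref{a2} to the first group bounds it by $\bar L_2(1+|\varpi_{\Delta}(m_1)|^2)\le\bar L_2(1+|m_1|^2)$, using the contraction property $|\varpi_{\Delta}(m_1)|\le|m_1|$.

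For the correction term I would use that $m_1$ and $\varpi_{\Delta}(m_1)$ point in the same direction, so $m_1-\varpi_{\Delta}(m_1)$ is either $0$ (when $|m_1|\le h^{-1}(\Gamma(\Delta))$) or has the same sign as $m_1$ with $|m_1-\varpi_{\Delta}(m_1)|\le|m_1|$; in the nontrivial case $|m_1|>h^{-1}(\Gamma(\Delta))$, so that the truncation is active. Combined with the uniform bound $|\alpha_{1,\Delta}(m_1)|\vee|\alpha_{2,\Delta}(m_1)|\le\Gamma(\Delta)$ established right after the definition of $\varpi_{\Delta}$, one gets
\begin{equation*}
  \bigl(m_1-\varpi_{\Delta}(m_1)\bigr)\bigl(\alpha_{1,\Delta}(m_1)+\alpha_{2,\Delta}(m_1)\bigr)
  \le |m_1|\cdot 2\Gamma(\Delta).
\end{equation*}
Now the key trick (the same as in Lemma 2.4 of \cite{9}): on the region $|m_1|>h^{-1}(\Gamma(\Delta))\ge 1$ one has $\Gamma(\Delta)\le h(|m_1|)$, but more usefully one can absorb $2|m_1|\Gamma(\Delta)$ into $C(1+|m_1|^2)$ provided $\Gamma(\Delta)\le C$-type control is available — however that is false since $\Gamma(\Delta)\to\infty$. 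The correct route is instead to note that when the truncation is active, $|m_1|^2> (h^{-1}(\Gamma(\Delta)))^2$, and one splits $2|m_1|\Gamma(\Delta)\le |m_1|^2+\Gamma(\Delta)^2$; then since $\Gamma(\Delta)\le h(|m_1|)$ is the wrong direction, the argument of \cite{9} rather uses monotonicity of $\alpha_1$ to show the $\alpha_{1,\Delta}$ part of the correction is $\le 0$, and treats only the $\alpha_{2,\Delta}$ part, for which $|m_1-\varpi_\Delta(m_1)|\,|\alpha_{2,\Delta}(m_1)|\le |m_1|\,|\alpha_2(\varpi_\Delta(m_1))|$ is bounded using a growth bound on $\alpha_2$ at the truncation level $h^{-1}(\Gamma(\Delta))\le|m_1|$, yielding a term $\le L|m_1|(1+|m_1|^{1+l_2})$-type — and then the bound $h^{-1}(\Gamma(\Delta))\le |m_1|$ together with \eqref{tru} gives $|\alpha_2(\varpi_\Delta(m_1))|\le h(h^{-1}(\Gamma(\Delta)))=\Gamma(\Delta)$, and one closes by observing that on the active set $\Gamma(\Delta)^2\le C|m_1|^2$ is \emph{not} automatic, so the genuinely needed inequality is $|m_1-\varpi_\Delta(m_1)|\,\Gamma(\Delta)\le C(1+|m_1|^2)$, which does hold because $|m_1-\varpi_\Delta(m_1)|=|m_1|-h^{-1}(\Gamma(\Delta))$ and $\Gamma(\Delta)=h(h^{-1}(\Gamma(\Delta)))$, so one needs a mild regularity comparison of $h$ and $h^{-1}$; following \cite{9} this is handled so that the whole correction is bounded by $\tfrac{3}{2}L_2(1+|m_1|^2)$.

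Finally I would reassemble: the directional/truncated main part contributes $L_2(1+|m_1|^2)$ (upgrading $\bar L_2$ to $L_2$ as in the displayed consequence of Assumption \ref{a2}), the correction contributes at most $\tfrac{3}{2}L_2(1+|m_1|^2)$, and the Lipschitz term $m_1\alpha_3(m^{(r)})$ is handled exactly as in the passage deriving the global one-sided bound from Assumption \ref{a2}: $m_1\alpha_3(m^{(r)})\le |m_1|\,L_1\sum_{i=1}^r|m_i-0|\le \tfrac{r-1}{2}L_2\sum_{i=2}^r|m_i|^2$ plus a contribution to the $|m_1|^2$ term, after using $|\alpha_3(0)|$-boundedness and Young's inequality with the right constants to match $\tfrac{5}{2}L_2$. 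Summing gives $m_1\alpha_\Delta(m^{(r)})+\tfrac{\check p-1}{2}|\beta_\Delta(m_1)|^2\le \tfrac{5}{2}L_2(1+|m_1|^2)+\tfrac{L_2(r-1)}{2}\sum_{i=2}^r|m_i|^2$. The main obstacle is the correction-term estimate: one must verify that activating the truncation forces $|m_1|\ge h^{-1}(\Gamma(\Delta))$ in a way that converts the unbounded factor $\Gamma(\Delta)$ into something controlled by $1+|m_1|^2$; this is the content imported from Lemma 2.4 of \cite{9}, and all that is new here is bookkeeping the extra delay coordinates, which enter only through the already-Lipschitz $\alpha_3$ and therefore produce the harmless $\tfrac{L_2(r-1)}{2}\sum_{i=2}^r|m_i|^2$ term.
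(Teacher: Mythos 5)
Your decomposition $m_1(\alpha_{1,\Delta}(m_1)+\alpha_{2,\Delta}(m_1))=\varpi_{\Delta}(m_1)\bigl(\alpha_1(\varpi_{\Delta}(m_1))+\alpha_2(\varpi_{\Delta}(m_1))\bigr)+\bigl(m_1-\varpi_{\Delta}(m_1)\bigr)\bigl(\alpha_{1,\Delta}(m_1)+\alpha_{2,\Delta}(m_1)\bigr)$ and the bound on the first group via Assumption \ref{a2} are fine, but your treatment of the correction term — which is the whole content of the lemma — has a genuine gap. The crude bound $|\alpha_{1,\Delta}|\vee|\alpha_{2,\Delta}|\le\Gamma(\Delta)$ cannot be absorbed into $C(1+|m_1|^2)$, as you yourself notice; the sign argument you then invoke ($\alpha_1$ non-increasing $\Rightarrow$ the $\alpha_{1,\Delta}$ part of the correction is $\le 0$) is unjustified, since non-increasing does not force $\mathrm{sign}(m_1)\,\alpha_1(\varpi_\Delta(m_1))\le 0$ (take $\alpha_1(x)=-x|x|^{1/2}+10$); and the inequality you finally declare to be "the genuinely needed" one, $|m_1-\varpi_\Delta(m_1)|\,\Gamma(\Delta)\le C(1+|m_1|^2)$ with $C$ independent of $\Delta$, is simply false for the paper's own choices $h(w)=8w^3$, $\Gamma(\Delta)=\Delta^{-1/4}$: taking $|m_1|=2h^{-1}(\Gamma(\Delta))$ gives $|m_1-\varpi_\Delta(m_1)|\Gamma(\Delta)=8\bigl(h^{-1}(\Gamma(\Delta))\bigr)^4$, which outgrows $1+|m_1|^2\asymp\bigl(h^{-1}(\Gamma(\Delta))\bigr)^2$ as $\Delta\to 0$. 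Ending with "following \cite{9} this is handled" is not a proof of the one step that actually needs proving.

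The argument the paper intends (it gives no proof, only the citation to Lemma 2.4 of \cite{9}) avoids the crude $\Gamma(\Delta)$ bound entirely and is compatible with your decomposition: write $m_1=\frac{|m_1|}{|m_1|\wedge h^{-1}(\Gamma(\Delta))}\,\varpi_{\Delta}(m_1)$, so that
\begin{equation*}
m_1\bigl(\alpha_{1,\Delta}(m_1)+\alpha_{2,\Delta}(m_1)\bigr)+\tfrac{\check p-1}{2}|\beta_{\Delta}(m_1)|^2
\le \frac{|m_1|}{|m_1|\wedge h^{-1}(\Gamma(\Delta))}\Bigl[\varpi_{\Delta}(m_1)\bigl(\alpha_1+\alpha_2\bigr)(\varpi_{\Delta}(m_1))+\tfrac{\check p-1}{2}|\beta(\varpi_{\Delta}(m_1))|^2\Bigr],
\end{equation*}
where the factor is $\ge 1$ and the diffusion term is nonnegative; then Assumption \ref{a2} at $\varpi_{\Delta}(m_1)$ and, on the active set, $1\le h^{-1}(\Gamma(\Delta))\le|m_1|$ give $\frac{|m_1|}{h^{-1}(\Gamma(\Delta))}\bigl(1+(h^{-1}(\Gamma(\Delta)))^2\bigr)\le|m_1|+|m_1|^2\le 2(1+|m_1|^2)$, uniformly in $\Delta$. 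Equivalently, in your notation, the correction term should be rewritten as $\lambda\,\varpi_{\Delta}(m_1)(\alpha_1+\alpha_2)(\varpi_{\Delta}(m_1))$ with $\lambda=(|m_1|-h^{-1}(\Gamma(\Delta)))_+/h^{-1}(\Gamma(\Delta))$ and bounded by a second application of Assumption \ref{a2}, not by the uniform truncation bound. The remaining $m_1\alpha_3(m^{(r)})$ term and the bookkeeping of the constants $\tfrac52 L_2$ and $\tfrac{(r-1)L_2}{2}$ via Young's inequality are routine, as you say, but as written your proposal does not close the essential step.
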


	\begin{lem}\label{lem3.3}
		Let Assumptions \ref{a1}-\ref{a4} hold. Then for any $\Delta\in(0,1]$ and $p\in(0,\check{p}]$, 
		$$\sup_{0<\Delta\leq1}\sup_{0\leq t\leq T}\mathbb{E}|\chi(t)|^p\leq C,$$
		and
		$$\mathbb{E}|\chi(t)-\bar{\chi}(t)|^p\leq C\Delta^\frac{p}{2}\Gamma^p(\Delta).$$
	\end{lem}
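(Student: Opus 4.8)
The plan is to prove the two assertions in turn, treating the $p$-th moment bound first and then the one-step increment estimate. For the moment bound, I would begin with the case $p \in [2, \check p]$ and apply It\^o's formula to $|\chi(t)|^p$ using the continuous form \eqref{continuous}. This produces a drift term of the form $p|\chi(s)|^{p-2}\big[\chi(s)\alpha_\Delta(\varPhi(\bar\chi_s)) + \tfrac{p-1}{2}|\beta_\Delta(\bar\chi(s))|^2\big]$ and a martingale term. The key point is that the integrand involves $\chi(s)$ outside but $\bar\chi(s)$ inside the coefficients, so I first split $\chi(s)\alpha_\Delta(\varPhi(\bar\chi_s)) = \bar\chi(s)\alpha_\Delta(\varPhi(\bar\chi_s)) + (\chi(s)-\bar\chi(s))\alpha_\Delta(\varPhi(\bar\chi_s))$. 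The first piece is controlled by Lemma~\ref{lem2.2} (Khasminskii-type inequality for the truncated coefficients), giving a bound of order $1 + \sum_{i} |\bar\chi(s-\tau_i)|^2$. Similarly $\tfrac{p-1}{2}|\beta_\Delta(\bar\chi(s))|^2$ combines into that estimate when $p \le \check p$. The cross term $(\chi(s)-\bar\chi(s))\alpha_\Delta(\cdots)$ and the $|\beta_\Delta|^2$ contribution beyond what Lemma~\ref{lem2.2} absorbs are handled crudely using $|\alpha_{1,\Delta}|\vee|\alpha_{2,\Delta}|\vee|\beta_\Delta| \le \Gamma(\Delta)$ together with the yet-unproven increment bound — so I would actually prove a preliminary rough version of the increment estimate first (see below), or argue the two claims simultaneously via a bootstrap. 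After taking expectations (the martingale term vanishes or, after BDG and Young, is absorbed into $\tfrac12\E\sup$ on a finite-time version), using the delay structure $\sup_{0\le u\le s}\E|\bar\chi(u+\bar s_i)|^2 \le \|\xi\|^2 + \sup_{0\le u\le s}\E|\chi(u)|^2$, and applying Gr\"onwall's inequality, I obtain the uniform-in-$\Delta$ bound for $p \in [2,\check p]$; the case $p \in (0,2)$ follows by H\"older's inequality.

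For the increment estimate $\E|\chi(t) - \bar\chi(t)|^p \le C\Delta^{p/2}\Gamma^p(\Delta)$, fix $t \in [t_k, t_{k+1})$ so that $\bar\chi(t) = \chi(t_k)$ and write
\[
\chi(t) - \bar\chi(t) = \int_{t_k}^{t} \alpha_\Delta(\varPhi(\bar\chi_s))\,ds + \int_{t_k}^{t} \beta_\Delta(\bar\chi(s))\,dB(s).
\]
Since $\alpha_\Delta(m^{(r)}) = \alpha_{1,\Delta}(m_1) + \alpha_{2,\Delta}(m_1) + \alpha_3(m^{(r)})$, the truncated parts are bounded by $\Gamma(\Delta)$, and $\alpha_3$ grows at most linearly by Assumption~\ref{a1}, so $|\alpha_\Delta(\varPhi(\bar\chi_s))| \le C\Gamma(\Delta) + C\sum_i |\bar\chi(s-\tau_i)|$; similarly $|\beta_\Delta(\bar\chi(s))| \le \Gamma(\Delta)$. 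For $p \ge 2$, apply the elementary inequality $|a+b|^p \le 2^{p-1}(|a|^p + |b|^p)$, then H\"older in time to the drift integral (picking up a factor $\Delta^{p-1} \cdot \Delta = \Delta^p \le \Delta^{p/2}$) and the BDG inequality to the stochastic integral (picking up $(\int_{t_k}^t |\beta_\Delta|^2 ds)^{p/2} \le (\Delta\Gamma^2(\Delta))^{p/2}$). Using the already-established first part to bound $\E|\bar\chi(s-\tau_i)|^p$ uniformly, and noting $\Gamma(\Delta)\ge 1$ so the drift's $\Delta^p$ is dominated by $\Delta^{p/2}\Gamma^p(\Delta)$, yields the claim for $p\ge 2$; the case $p \in (0,2)$ again follows by H\"older.

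The main obstacle is the circular dependence between the two assertions: the moment bound's proof needs control of $\chi(s) - \bar\chi(s)$ (to handle the cross term and the excess diffusion contribution), while the cleanest increment bound invokes the moment bound to estimate $\E|\bar\chi(s-\tau_i)|^p$. I expect to resolve this by first establishing a crude increment bound that uses only $|\alpha_{i,\Delta}|\vee|\beta_\Delta|\le\Gamma(\Delta)$ and the linear growth of $\alpha_3$ expressed in terms of $\bar\chi$ itself — this is self-contained modulo $\E|\bar\chi|^p$ appearing on both sides over the short interval $[t_k,t]$, which can be closed since the interval has length $\Delta$ — then feed this into the It\^o estimate to get the moment bound, and finally return to upgrade the increment bound to the stated sharp form. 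A secondary technical point is keeping all constants independent of the number of delays $r$; here the decomposition of $\alpha$ and the structure in Lemma~\ref{lem2.2}, where the delay contribution enters additively with coefficient $L_2(r-1)/2$, is what makes the Gr\"onwall argument go through cleanly, and one must be careful that the final constant $C$ is allowed to depend on $r$, $T$, $L_0$ and the $L_i$ but not on $\Delta$.
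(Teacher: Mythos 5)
Your proposal follows essentially the same route as the paper: Itô's formula with the splitting $\chi\alpha_\Delta = \bar\chi\alpha_\Delta + (\chi-\bar\chi)\alpha_\Delta$, Lemma \ref{lem2.2} for the Khasminskii-type term, the one-step increment bound obtained from the integral representation with the truncated parts bounded by $\Gamma(\Delta)$ and $\alpha_3$ of linear growth, and Gr\"onwall to close (lower $p$ by H\"older/Lyapunov). Your bootstrap resolution of the interdependence is exactly how the paper proceeds — it proves the increment estimate with moments of $\bar\chi$ on the right-hand side, substitutes it into the It\^o estimate, and absorbs the resulting lower-power term via Young's inequality and the condition $\Delta^{1/4}\Gamma(\Delta)\le L_0$ — so the proposal is correct.
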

	
	\begin{proof}
		For any $t\in[0,T]$, 
		we get from It\^o's formula, Young's inequality, H\"older's inequality that
		\begin{equation*}	
			\begin{split}	
				\sup_{0\leq s\leq t}\mathbb{E}|\chi(s)|^p
				\leq&||\xi||^p+\sup_{0\leq s\leq t}\mathbb{E}\Big[p\int_{0}^{s}|\chi (u)|^{p-2}[\chi(u)\alpha_{\Delta}(\varPhi(\bar{\chi}_u))+\frac{p-1}{2}|\beta_{\Delta}(\bar{\chi}(u))|^2]du\\
				&+p\int_{0}^{s}|\chi(u)|^{p-2} \chi(u)\beta_{\Delta}(\bar{\chi}(u))dB(u)\Big]\\
				\leq&||\xi||^p+\sup_{0\leq s\leq t}\mathbb{E}\Big(p\int_{0}^{s}|\chi (u)|^{p-2}\big[\bar{\chi}(u)\alpha_{\Delta}(\varPhi(\bar{\chi}_u))+\frac{p-1}{2}|\beta_{\Delta}(\bar{\chi}(u))|^2\big]du\Big)\\
				&+p\mathbb{E}\int_{0}^{t}|\chi(s)|^{p-2}|\chi (s)-\bar{\chi}(s)||\alpha_{\Delta}(\varPhi(\bar{\chi}_s))|ds\\
				\leq&||\xi||^p+p\mathbb{E}\int_{0}^{t}|\chi(s)|^{p-2}\Big(\frac{5}{2}L_{2}(1+|\bar{\chi}(s)|^2)
				+\frac{L_{2}(r-1)}{2}\sum_{v=2}^{r}|\bar{\chi}(s+\bar{s}_v)|^2\Big)ds\\
				&+p\Gamma(\Delta)\mathbb{E}\int_{0}^{t}
				|\chi(s)|^{p-2}|\chi (s)-\bar{\chi}(s)|ds\\
				\leq&||\xi||^p+C\mathbb{E}\int_{0}^{t}\Big(1+|\chi(s)|^p+\sum_{v=1}^{r}|\bar{\chi}(s+\bar{s}_v)|^p\Big)ds
				+p\Gamma^\frac{p}{2}(\Delta)\int_{0}^{t}
				\mathbb{E}|\chi (s)-\bar{\chi}(s)|^\frac{p}{2}ds.
			\end{split}	
		\end{equation*}	
		The key point is to estimate $\mathbb{E}
		|\chi (t)-\bar{\chi}(t)|^\frac{p}{2}$.
		For any $t\in[0,T]$, 
		there always exists a positive integer $k$ such that $t\in[t_k,t_{k+1})$. From (\ref{continuous}), we get that
		$$\chi (t)-\bar{\chi}(t)=\int_{t_k}^{t}\alpha_{\Delta}(\varPhi(\chi_{\Delta,t_k}))ds+\int_{t_k}^{t}\beta_{\Delta}(\chi_{\Delta}(t_k))dB(s).$$
		For any $\Delta\in(0,1]$, $t\in[0,T]$ and $p\in[2,\check{p}]$, we derive that
		\begin{equation*}
			\begin{split}
				\mathbb{E}|\chi (t)-\bar{\chi}(t)|^\frac{p}{2}
				\leq&C\Delta^{\frac{p}{2}-1}\mathbb{E}\int_{t_k}^{t}|\alpha_{1,\Delta}(\chi_{\Delta}(t_k))|^\frac{p}{2}ds+C\Delta^{\frac{p}{2}-1}\mathbb{E}\int_{t_k}^{t}|\alpha_{2,\Delta}(\chi_{\Delta}(t_k))|^\frac{p}{2}ds\\&+C\Delta^{\frac{p}{2}-1}\mathbb{E}\int_{t_k}^{t}|\alpha_{3}(\varPhi(\chi_{\Delta,t_k}))|^\frac{p}{2}ds+C\Delta^{\frac{p}{4}-1}\mathbb{E}\int_{t_k}^{t}|\beta_{\Delta}(\chi_{\Delta}(t_k))|^\frac{p}{2}ds\\
				\leq&C\Delta^\frac{p}{4}\Gamma^\frac{p}{2}(\Delta)+C\Delta^\frac{p}{2}\big(1+\sup_{0\leq t\leq T}\mathbb{E}|\bar{\chi}(t)|^\frac{p}{2}+\|\xi\|^\frac{p}{2}\big).\\
			\end{split}
		\end{equation*}
	
		Therefore, combining these inequalities with Young's inequality yields that
		
		\begin{equation*}
			\begin{split}
				\sup_{0\leq s\leq t}\mathbb{E}|\chi(s)|^p
				\leq&C+C\int_{0}^{t}\sup_{0\leq u\leq s}\mathbb{E}|\chi(u)|^pds
				+C\int_{-\tau}^{0}\|\xi\|^p ds+p\Gamma^\frac{p}{2}(\Delta)\int_{0}^{t}
				\mathbb{E}|\chi (s)-\bar{\chi}(s)|^\frac{p}{2}ds\\
				\leq&C+C\int_{0}^{t}\sup_{0\leq u\leq s}\mathbb{E}|\chi(u)|^pds
				+C\Delta^\frac{p}{2}\Gamma^\frac{p}{2}(\Delta)\int_{0}^{t}
				\sup_{0\leq u\leq s}\mathbb{E}|\chi(u)|^\frac{p}{2}ds
				ds\\\leq&C+C\int_{0}^{t}\sup_{0\leq u\leq s}\mathbb{E}|\chi(u)|^pds.
			\end{split}
		\end{equation*}
		An application of the Gronwall inequality leads to the desired result for $p\in[2,\check{p}]$.
		The case when $p\in(0,2)$ holds with the aid of the Lyapunov inequality.
	\end{proof} 
	
	\begin{lem}\label{esnb}
		Let Assumptions \ref{a1}-\ref{a4} hold. Then for any $\Delta\in(0,1]$ and $\bar{p}\in\left(0,\frac{p}{l_{3}+1}\right]$, we have
		$$\sup_{\Delta\in(0,1]}\mathbb{E}\Big(\sup_{0\leq t\leq T}|\chi(t)|^{\bar{p}}\Big)\leq C.$$
	\end{lem}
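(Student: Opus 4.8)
The plan is to carry out, for the continuous-time scheme $\chi$ in $(\ref{continuous})$, the same energy estimate that gave Lemma~\ref{eseb} for the exact solution, with Lemma~\ref{lem2.2} playing the role of the coercivity condition and Lemma~\ref{lem3.3} used to absorb the one-step error $\chi-\bar\chi$. Fix $\bar p\in(0,\frac{p}{l_3+1}]$; we may assume $\bar p\ge 2$, since for smaller $\bar p$ one either invokes Jensen's inequality (when $\frac{p}{l_3+1}\ge 2$) or reruns the computation below with $|\chi(t)|^{\bar p}$ replaced by the smoothed function $(|\chi(t)|^2+\varepsilon)^{\bar p/2}$, letting $\varepsilon\downarrow 0$ at the end.

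First I would apply It\^o's formula to $|\chi(t)|^{\bar p}$ along $(\ref{continuous})$, take the supremum over $s\in[0,t]$, pass to expectations, and control the stochastic integral by the Burkholder--Davis--Gundy inequality, to reach
\begin{equation*}
\mathbb{E}\sup_{0\le s\le t}|\chi(s)|^{\bar p}\le\|\xi\|^{\bar p}+\bar p\,\mathbb{E}\int_0^t|\chi(u)|^{\bar p-2}\Big[\chi(u)\alpha_\Delta(\varPhi(\bar\chi_u))+\frac{\bar p-1}{2}|\beta_\Delta(\bar\chi(u))|^2\Big]du+C\,\mathbb{E}\Big(\int_0^t|\chi(u)|^{2\bar p-2}|\beta_\Delta(\bar\chi(u))|^2du\Big)^{\!1/2}.
\end{equation*}
In the drift integral I would split $\chi(u)\alpha_\Delta(\varPhi(\bar\chi_u))=\bar\chi(u)\alpha_\Delta(\varPhi(\bar\chi_u))+(\chi(u)-\bar\chi(u))\alpha_\Delta(\varPhi(\bar\chi_u))$, apply Lemma~\ref{lem2.2} to the first piece together with $\frac{\bar p-1}{2}|\beta_\Delta(\bar\chi(u))|^2$ (legitimate because $\bar p\le\frac{\check p}{2}\le\check p$), and estimate the cross term via $|\alpha_\Delta(\varPhi(\bar\chi_u))|\le C\Gamma(\Delta)+C\sum_{v=1}^{r}|\bar\chi(u+\bar s_v)|$, which follows from the boundedness of the truncated parts and the Lipschitz bound on $\alpha_3$ in Assumption~\ref{a1}. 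Several applications of Young's inequality (with conjugate exponents $\frac{\bar p}{\bar p-2},\frac{\bar p}{2}$, plus a three-factor version) then bound the drift contribution by a combination of $\mathbb{E}|\chi(u)|^{\bar p}$, $\mathbb{E}|\bar\chi(u+\bar s_v)|^{\bar p}$, $\Gamma^{\bar p/2}(\Delta)\,\mathbb{E}|\chi(u)-\bar\chi(u)|^{\bar p/2}$ and $\mathbb{E}|\chi(u)-\bar\chi(u)|^{\bar p}$, each of them finite uniformly in $\Delta$: the moment terms by Lemma~\ref{lem3.3} (note that $\bar\chi(u+\bar s_v)$ is always some $\chi(t_k)$ or $\xi(t_k)$, hence has moments up to order $\check p$ under uniform control), and the error terms because Lemma~\ref{lem3.3} gives $\mathbb{E}|\chi(u)-\bar\chi(u)|^q\le C\Delta^{q/2}\Gamma^q(\Delta)$, which together with $\Delta^{1/4}\Gamma(\Delta)\le L_0$ makes $\Gamma^{\bar p/2}(\Delta)\Delta^{\bar p/4}\Gamma^{\bar p/2}(\Delta)=(\Delta^{1/4}\Gamma(\Delta))^{\bar p}$ and $\Delta^{\bar p/2}\Gamma^{\bar p}(\Delta)=(\Delta^{1/2}\Gamma(\Delta))^{\bar p}$ both at most $L_0^{\bar p}$.

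For the stochastic term I would factor $|\chi(u)|^{2\bar p-2}=|\chi(u)|^{\bar p}|\chi(u)|^{\bar p-2}\le\big(\sup_{0\le w\le t}|\chi(w)|^{\bar p}\big)|\chi(u)|^{\bar p-2}$, pull the supremum out of the integral, and use $ab\le\frac14a^2+b^2$ to obtain the bound $\frac14\mathbb{E}\sup_{0\le s\le t}|\chi(s)|^{\bar p}+C\,\mathbb{E}\int_0^t|\chi(u)|^{\bar p-2}|\beta_\Delta(\bar\chi(u))|^2du$. Assumption~\ref{a4} with one argument set to zero, together with $|\varpi_\Delta(x)|\le|x|$, gives $|\beta_\Delta(\bar\chi(u))|^2\le C(1+|\bar\chi(u)|^{2(l_3+1)})$, so one more Young step turns the remaining integrand into $C(1+|\chi(u)|^{\bar p}+|\bar\chi(u)|^{\bar p(l_3+1)})$, whose expectation is a constant since $\bar p(l_3+1)\le p\le\check p$ and Lemma~\ref{lem3.3} applies. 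Collecting everything yields $\mathbb{E}\sup_{0\le s\le t}|\chi(s)|^{\bar p}\le C+\frac14\mathbb{E}\sup_{0\le s\le t}|\chi(s)|^{\bar p}$ with $C$ independent of $\Delta$; as the scheme has, for each fixed $\Delta$, bounded diffusion coefficient and at most linearly growing drift, $\mathbb{E}\sup_{0\le s\le T}|\chi(s)|^{\bar p}<\infty$, and the inequality can be rearranged to give the asserted uniform bound.

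The step I expect to be the main obstacle is the bookkeeping around the mismatch between $\chi$ (entering the It\^o terms) and $\bar\chi$ (entering the coefficients): one has to make sure that each factor of the truncation level $\Gamma(\Delta)$ is paired with an appropriate power of $\mathbb{E}|\chi-\bar\chi|^q$ so that, through $\Delta^{1/4}\Gamma(\Delta)\le L_0$, it collapses to a $\Delta$-free constant, and that the auxiliary exponent $\bar p(l_3+1)$ produced by the diffusion estimate never leaves the range $(0,\check p]$ on which the uniform moment bounds of Lemma~\ref{lem3.3} are valid.
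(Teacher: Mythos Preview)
Your proposal is correct and follows essentially the same route as the paper: It\^o's formula for $|\chi|^{\bar p}$, the BDG inequality, the splitting $\chi\alpha_\Delta=\bar\chi\alpha_\Delta+(\chi-\bar\chi)\alpha_\Delta$ together with Lemma~\ref{lem2.2}, the growth estimate $|\beta_\Delta(\bar\chi)|^2\le C(1+|\bar\chi|^{2(l_3+1)})$, and the key constraint $\bar p(l_3+1)\le p$. The only cosmetic difference is that the paper keeps the term $C\int_0^t\mathbb{E}\sup_{0\le u\le s}|\chi(u)|^{\bar p}\,ds$ and closes with Gronwall's inequality, whereas you invoke the pointwise moment bound of Lemma~\ref{lem3.3} directly to reduce everything to constants and then rearrange; both are valid and your handling of the cross term and of the $\Gamma(\Delta)$ cancellation via $\Delta^{1/4}\Gamma(\Delta)\le L_0$ is in fact slightly more explicit than the paper's.
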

	\begin{proof}
		By employing It\^o's formula, H\"older's inequality and Young's inequality, for $\bar{p}\geq 2$, we have
		\begin{equation*}
			\begin{split}
				\mathbb{E}\sup_{0\leq s\leq t}|\chi(t)|^{\bar{p}}
				\leq&\mathbb{E}\Big[||\xi||^{\bar{p}} +\sup_{0\leq s\leq t}\bar{p}\int_{0}^{s}|\chi(u)|^{\bar{p}-2}[\chi (u)\alpha_{\Delta}(\varPhi(\bar{\chi}_u))+\frac{\bar{p}-1}{2}|\beta_{\Delta}(\bar{\chi}(u))|^2]du\Big]\\
				&+\bar{p}\mathbb{E}\sup_{0\leq s\leq t}\int_{0}^{s}|\chi(u)|^{\bar{p}-2} \chi(u)\beta_{\Delta}(\bar{\chi}(u))dB(u)\\
				\leq&\mathbb{E}||\xi||^{\bar{p}}+C\mathbb{E}\int_{0}^{t}|\chi(u)|^{\bar{p}-2}\Big[\frac{5}{2}L_{2}(1+|\bar{\chi}(u)|^2)+\frac{L_{2}(r-1)}{2}\sum_{v=2}^{r}|\bar{\chi}(u+\bar{s}_v)|^2\Big]du\\
				&+\bar{p}\Gamma(\Delta)\mathbb{E}\int_{0}^{t}
				|\chi(u)|^{\bar{p}-2}|\chi (u)-\bar{\chi}(u)|du+C\mathbb{E}\big(\int_{0}^{t}|\chi(u)|^{2\bar{p}-2}|\beta_{\Delta}(\bar{\chi}(u))|^2du\big)^\frac{1}{2}\\
				\leq&C+C\int_{0}^{t}\mathbb{E}\sup_{0\leq u\leq s}|\chi(u)|^{\bar{p}}ds
				+\frac{1}{2}\mathbb{E}\sup_{0\leq s\leq t}|z(s)|^{\bar{p}}+C\int_{0}^{t}\sup_{0\leq u\leq s}\mathbb{E}|\chi(u)|^{\bar{p}(l_3+1)}ds.
			\end{split}
		\end{equation*}
		Due to Lemma \ref{lem3.3},  Gronwall's inequality, Lyapunov's inequality and the condition $\bar{p}(l_{3}+1)\leq p$, we get the desired result.
	\end{proof}
	
	The following lemma can be obtained immediately by borrowing the technique in Lemmas \ref{lem3.1} and \ref{lem3.3} with the Chebyshev inequality.
	\begin{lem}\label{lemma4.1}
		Let Assumptions \ref{a1}-\ref{a4} hold. For any $K>||\xi||$, define the stopping times:
		$$\rho_k=\inf\{t\in[0,T]:|z(t)|\geq K\},~\rho_{\Delta,k}=\inf\{t\in[0,T]:|\chi(t)|\geq K\}.$$
		Then we derive that
		$\mathbb{P}(\rho_k\leq T)\vee\mathbb{P}(\rho_{\Delta,k}\leq T)\leq\frac{C}{K^2}.$
	\end{lem}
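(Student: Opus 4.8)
The plan is to derive, uniformly in $K$ and in $\Delta\in(0,1]$, the stopped second-moment bounds $\mathbb{E}|z(T\wedge\rho_k)|^2\leq C$ and $\mathbb{E}|\chi(T\wedge\rho_{\Delta,k})|^2\leq C$, and then to conclude with the Chebyshev inequality together with the path-continuity of $z$ and of $\chi$. I treat the two processes in parallel, repeating the estimates from the proofs of Lemmas \ref{lem3.1} and \ref{lem3.3} with the stopping time inserted.

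For the exact solution, I apply It\^o's formula to $|z(t\wedge\rho_k)|^2$. Since $|z(s)|\leq K$ for every $s\in[0,\rho_k]$ by continuity of the paths, and $\beta$ is continuous hence bounded on $[-K,K]$, the stopped stochastic integral $\int_0^{t\wedge\rho_k}2z(s)\beta(z(s))\,dB(s)$ is a genuine martingale and drops out after taking expectations. Using the consequence of Assumption \ref{a2} displayed right after it, together with $\check{p}\geq 2$ (which lets $\tfrac12|\beta|^2$ be dominated by $\tfrac{\check{p}-1}{2}|\beta|^2$), one obtains
$$
\mathbb{E}|z(t\wedge\rho_k)|^2\leq|\xi(0)|^2+2L_2\,\mathbb{E}\int_0^{t\wedge\rho_k}\Big(1+\sum_{v=1}^{r}|z(s+\bar{s}_v)|^2\Big)ds .
$$
The delayed terms are handled as in the boundedness lemmas: on $\{s<\rho_k\}$ one has $s+\bar{s}_v<\rho_k$ for every $v$, hence $|z(s+\bar{s}_v)|^2=|z((s+\bar{s}_v)\wedge\rho_k)|^2$ (here one uses $\|\xi\|<K$, so that $z$ is already bounded by $K$ on $[-\tau,0]$), and each such contribution is at most $\|\xi\|^2+\sup_{0\leq u\leq s}\mathbb{E}|z(u\wedge\rho_k)|^2$. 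Setting $\phi(t):=\sup_{0\leq s\leq t}\mathbb{E}|z(s\wedge\rho_k)|^2$ and taking the supremum over $t$ gives $\phi(t)\leq C+C\int_0^t\phi(s)\,ds$, so Gronwall's inequality yields $\phi(T)\leq C$ with $C$ independent of $K$.

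The argument for $\chi$ is the same, applied to $|\chi(t\wedge\rho_{\Delta,k})|^2$: the stopped stochastic integral is now a martingale because $|\beta_\Delta(\cdot)|\leq\Gamma(\Delta)$; Lemma \ref{lem2.2} replaces the Assumption \ref{a2} consequence; the mismatch $\chi(u)\alpha_\Delta(\varPhi(\bar{\chi}_u))=\bar{\chi}(u)\alpha_\Delta(\varPhi(\bar{\chi}_u))+\big(\chi(u)-\bar{\chi}(u)\big)\alpha_\Delta(\varPhi(\bar{\chi}_u))$ is split as in the proof of Lemma \ref{lem3.3}, the extra term being bounded through $|\alpha_\Delta|\leq 2\Gamma(\Delta)+L_1\sum_v|\bar{\chi}(u+\bar{s}_v)|$, Young's inequality and Lemma \ref{lem3.3} (so that $\Gamma(\Delta)\,\mathbb{E}|\chi(u)-\bar{\chi}(u)|\leq C\Delta^{1/2}\Gamma^2(\Delta)=C\,(\Delta^{1/4}\Gamma(\Delta))^2\leq CL_0^2$, uniformly in $\Delta$); and the delayed step-process terms $\mathbb{E}|\bar{\chi}((s+\bar{s}_v)\wedge\rho_{\Delta,k})|^2$ are reduced to $\|\xi\|^2+\sup_{0\leq u\leq s}\mathbb{E}|\chi(u\wedge\rho_{\Delta,k})|^2$ just as above, using that $\bar{\chi}(u)=\chi(t_j)$ with $t_j<\rho_{\Delta,k}$ whenever $u<\rho_{\Delta,k}$. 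Gronwall's inequality then gives $\sup_{\Delta\in(0,1]}\mathbb{E}|\chi(T\wedge\rho_{\Delta,k})|^2\leq C$.

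Finally, since the paths of $z$ and of $\chi$ (the latter via the representation (\ref{continuous})) are continuous, on $\{\rho_k\leq T\}$ we have $|z(\rho_k)|=K$ and on $\{\rho_{\Delta,k}\leq T\}$ we have $|\chi(\rho_{\Delta,k})|=K$; hence
$$
K^2\,\mathbb{P}(\rho_k\leq T)\leq\mathbb{E}|z(\rho_k\wedge T)|^2\leq C,\qquad K^2\,\mathbb{P}(\rho_{\Delta,k}\leq T)\leq\mathbb{E}|\chi(\rho_{\Delta,k}\wedge T)|^2\leq C,
$$
and taking the maximum gives $\mathbb{P}(\rho_k\leq T)\vee\mathbb{P}(\rho_{\Delta,k}\leq T)\leq C/K^2$. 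I expect the only genuinely delicate point to be the bookkeeping for the multiple delay terms under the stopping times---verifying that the indicator identities allow each delayed value $z(s+\bar{s}_v)$ (resp.\ $\bar{\chi}(s+\bar{s}_v)$) to be replaced by its stopped counterpart, so that the Gronwall argument closes; everything else merely repeats computations already performed for Lemmas \ref{lem3.1}, \ref{lem3.3} and \ref{lem2.2}.
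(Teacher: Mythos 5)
Your proposal is correct and follows exactly the route the paper has in mind: the paper states the lemma follows "by borrowing the technique in Lemmas \ref{lem3.1} and \ref{lem3.3} with the Chebyshev inequality," and your argument is precisely that—redoing the It\^o/Gronwall second-moment estimates with the stopping times inserted (using the consequence of Assumption \ref{a2}, Lemma \ref{lem2.2}, the bound $\Delta^{1/4}\Gamma(\Delta)\leq L_0$, and the reduction of the delayed terms to stopped moments), and then concluding via $|z(\rho_k)|=K$, $|\chi(\rho_{\Delta,k})|=K$ on the respective events. Your write-up simply supplies the details the paper omits, including the bookkeeping for the multiple delays, which you handle correctly.
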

	
	\section{Convergence rates at $T$ in $\mathcal{L}^{1}$ and $\mathcal{L}^{2}$ sense}
	
	
	\begin{lem}\label{lemma4.2}
		Let Assumptions \ref{a1}-\ref{a3} hold and assume that $K>||\xi||$, $p\geq\left[l_1\vee l_2\vee (2l_3)\right]$. Let $\Delta\in(0,1]$ be sufficiently small such that $h^{-1}(\Gamma(\Delta))\geq K$. Then for any $t\in[0,T]$, we have
		\begin{equation*}
			\begin{split}
				\mathbb{E}|z(t\wedge\rho_{*})-\chi(t\wedge\rho_{*})|
				&\leq \left\{\begin{array}{ll}
					C\Big(\frac{1}{\ln\Delta^{-1}}+\big(\Delta^{\frac{1}{2}}\Gamma(\Delta)\big)^{\eta}+\Delta^{\frac{3}{8}}\Gamma(\Delta)+\Delta^{\gamma}
					\Big), & \text { if } \sigma=\frac{1}{2}, \\			
					~~~\\
					C\Big(\big(\Delta^{\frac{1}{2}}\Gamma(\Delta)\big)^{{(2\sigma-1)}\wedge{\eta}}
					+\Delta^\gamma\Big), & \text { if } \sigma\in(\frac{1}{2},1), 
				\end{array}\right.
			\end{split}
		\end{equation*}
		where $\rho_{*}:=\rho_k\wedge\rho_{\Delta,k}$.
	\end{lem}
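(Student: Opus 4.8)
The plan is to control $\mathbb{E}|z(t\wedge\rho_{*})-\chi(t\wedge\rho_{*})|$ through the Yamada--Watanabe function $U_{\Theta,\varepsilon}$. Put $e(t)=z(t)-\chi(t)$, so $e(0)=\xi(0)-\chi(0)=0$, and apply It\^o's formula to $U_{\Theta,\varepsilon}(e(t\wedge\rho_{*}))$; since the coefficients are bounded on $[0,\rho_{*}]$, the stochastic integral is a genuine martingale and drops out in expectation, leaving
$$\mathbb{E}U_{\Theta,\varepsilon}(e(t\wedge\rho_{*}))=\mathbb{E}\int_{0}^{t\wedge\rho_{*}}U'_{\Theta,\varepsilon}(e(s))\big(\alpha(\varPhi(z_s))-\alpha_{\Delta}(\varPhi(\bar{\chi}_s))\big)ds+\frac{1}{2}\mathbb{E}\int_{0}^{t\wedge\rho_{*}}U''_{\Theta,\varepsilon}(e(s))|\beta(z(s))-\beta_{\Delta}(\bar{\chi}(s))|^2ds.$$
The first reduction is that, on $[0,t\wedge\rho_{*}]$, every argument occurring, namely $z(s)$, the delayed values $z(s+\bar{s}_{v})$, $\chi(s)$, $\bar{\chi}(s)$, $\bar{\chi}(s+\bar{s}_{v})$ (at nonnegative times) and the initial data, has modulus at most $K$; since $\Delta$ is small enough that $h^{-1}(\Gamma(\Delta))\geq K$, the truncation map is the identity at all of them, so $\alpha_{j,\Delta}=\alpha_{j}$ and $\beta_{\Delta}=\beta$ throughout.

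For the drift I would use $\alpha=\alpha_1+\alpha_2+\alpha_3$ and insert the intermediate point $\chi(s)$. First, $U'_{\Theta,\varepsilon}(e(s))[\alpha_1(z(s))-\alpha_1(\chi(s))]\leq0$, since $\alpha_1$ is non-increasing while $U'_{\Theta,\varepsilon}$ is odd and nondecreasing, hence has the sign of $e(s)$. Second, $U'_{\Theta,\varepsilon}(e(s))[\alpha_1(\chi(s))-\alpha_1(\bar{\chi}(s))]\leq C|\chi(s)-\bar{\chi}(s)|^{\eta}$ by $|U'_{\Theta,\varepsilon}|\leq1$, the partial H\"older bound in Assumption \ref{a1}, and $|\chi(s)|\vee|\bar{\chi}(s)|\leq K$. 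Third, writing $U'_{\Theta,\varepsilon}(e(s))[\alpha_2(z(s))-\alpha_2(\chi(s))]=\frac{U'_{\Theta,\varepsilon}(e(s))}{e(s)}\,e(s)[\alpha_2(z(s))-\alpha_2(\chi(s))]$ and combining the one-sided Lipschitz bound in Assumption \ref{a1} with $|U'_{\Theta,\varepsilon}|\leq1$ gives $\leq L_1|e(s)|$. Fourth, $U'_{\Theta,\varepsilon}(e(s))[\alpha_2(\chi(s))-\alpha_2(\bar{\chi}(s))]\leq C|\chi(s)-\bar{\chi}(s)|$ by the local Lipschitz bound in Assumption \ref{a4}. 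Finally, Assumption \ref{a1} yields $U'_{\Theta,\varepsilon}(e(s))[\alpha_3(\varPhi(z_s))-\alpha_3(\varPhi(\bar{\chi}_s))]\leq L_1\sum_{v=1}^{r}|z(s+\bar{s}_{v})-\bar{\chi}(s+\bar{s}_{v})|$; for $s+\bar{s}_{v}\geq0$ I split this as $|e(s+\bar{s}_{v})|+|\chi(s+\bar{s}_{v})-\bar{\chi}(s+\bar{s}_{v})|$ (on $\{s\leq\rho_{*}\}$ one has $s+\bar{s}_{v}\leq\rho_{*}$ because $\bar{s}_{v}\leq0$, so the first term feeds the Gronwall loop via $\sup_{0\leq u\leq s}\mathbb{E}|e(u\wedge\rho_{*})|$), while for $s+\bar{s}_{v}<0$ it equals $|\xi(s+\bar{s}_{v})-\xi(t_j)|\leq L_4\Delta^{\gamma}$ by Assumption \ref{a3}, which produces the $\Delta^{\gamma}$ term.

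For the diffusion term, on the stopped interval $\beta_{\Delta}(\bar{\chi}(s))=\beta(\bar{\chi}(s))$, so $|\beta(z(s))-\beta(\bar{\chi}(s))|^2\leq2|\beta(z(s))-\beta(\chi(s))|^2+2|\beta(\chi(s))-\beta(\bar{\chi}(s))|^2$; by the H\"older bound in Assumption \ref{a4} and $0\leq U''_{\Theta,\varepsilon}(e(s))\leq\frac{2}{|e(s)|\ln\Theta}\mathbb{I}_{[\varepsilon/\Theta,\varepsilon]}(|e(s)|)$ the on-diagonal part contributes $\leq\frac{C}{\ln\Theta}|e(s)|^{2\sigma-1}\mathbb{I}_{[\varepsilon/\Theta,\varepsilon]}(|e(s)|)$, bounded by $\frac{C}{\ln\Theta}$ when $\sigma=\frac{1}{2}$ and by $\frac{C}{\ln\Theta}\varepsilon^{2\sigma-1}$ when $\sigma\in(\frac{1}{2},1)$, while the off-diagonal part, using $U''_{\Theta,\varepsilon}\leq\frac{2\Theta}{\varepsilon\ln\Theta}$, is $\leq\frac{C\Theta}{\varepsilon\ln\Theta}|\chi(s)-\bar{\chi}(s)|^{2\sigma}$. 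Collecting everything, invoking Lemma \ref{lem3.3} and Lyapunov's inequality to replace $\mathbb{E}|\chi(s)-\bar{\chi}(s)|^{q}$ by $C(\Delta^{1/2}\Gamma(\Delta))^{q}$ for $q\in\{\eta,1,2\sigma\}$, bundling the $|e|$--terms into a Gronwall loop, and using $|e(t\wedge\rho_{*})|\leq\varepsilon+U_{\Theta,\varepsilon}(e(t\wedge\rho_{*}))$, Gronwall's inequality gives
$$\sup_{0\leq t\leq T}\mathbb{E}|e(t\wedge\rho_{*})|\leq C\Big(\varepsilon+\frac{\varepsilon^{2\sigma-1}}{\ln\Theta}+\frac{\Theta}{\varepsilon\ln\Theta}(\Delta^{1/2}\Gamma(\Delta))^{2\sigma}+(\Delta^{1/2}\Gamma(\Delta))^{\eta}+\Delta^{1/2}\Gamma(\Delta)+\Delta^{\gamma}\Big),$$
and the two stated rates follow by optimizing over $\Theta$ and $\varepsilon$: for $\sigma\in(\frac{1}{2},1)$ one fixes $\Theta>1$ and takes $\varepsilon\sim\Delta^{1/2}\Gamma(\Delta)$ to balance the second and third terms into $(\Delta^{1/2}\Gamma(\Delta))^{(2\sigma-1)\wedge\eta}$; for $\sigma=\frac{1}{2}$ one must let $\ln\Theta\to\infty$ (to kill $\frac{1}{\ln\Theta}$) and choose $\varepsilon\sim\Delta^{3/8}\Gamma(\Delta)$, which is the largest $\Theta$-compatible choice keeping $\varepsilon<1$, giving the $\frac{1}{\ln\Delta^{-1}}+\Delta^{3/8}\Gamma(\Delta)$ contributions.

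The main obstacle is the borderline case $\sigma=\frac{1}{2}$ of the diffusion term. There the on-diagonal piece $U''_{\Theta,\varepsilon}(e(s))|e(s)|^{2\sigma}$ decays only like $1/\ln\Theta$, so it can never do better than $1/\ln\Delta^{-1}$, and the off-diagonal piece $\frac{\Theta}{\varepsilon\ln\Theta}\mathbb{E}|\chi(s)-\bar{\chi}(s)|$ couples $\Theta$ and $\varepsilon$; the crude bound $U''_{\Theta,\varepsilon}\leq\frac{2\Theta}{\varepsilon\ln\Theta}$ is not enough, and one should instead split according to whether $|\chi(s)-\bar{\chi}(s)|\leq\varepsilon/\Theta$ or not—the first alternative gives $\frac{C}{\ln\Theta}$ directly, while the second is controlled by a higher-moment (essentially sub-Gaussian, with scale $\Gamma(\Delta)\Delta^{1/2}$) estimate of $\chi-\bar{\chi}$ through Chebyshev's inequality. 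This is exactly where the weakened step-size restriction $\Delta^{1/4}\Gamma(\Delta)\leq L_0$ enters, as it caps how large $\Theta$ may be taken while keeping $\varepsilon<1$, and where the exponent $3/8$ originates. By contrast the multiple delays are harmless: $\bar{s}_{v}\leq0$ keeps every delayed argument inside the stopped region, and summing over $v\in\mathbb{S}_{r}$ only alters constants, not rates, which is the source of the asserted independence of the rate from $r$.
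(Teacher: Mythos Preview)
Your overall strategy---apply It\^o to $U_{\Theta,\varepsilon}$, exploit the monotonicity of $\alpha_1$ and the one-sided Lipschitz bound on $\alpha_2$ to get a Gronwall-ready drift term, and use the support property of $U''_{\Theta,\varepsilon}$ for the diffusion---is exactly the paper's. Your pre-Gronwall inequality
\[
\varepsilon+\frac{\varepsilon^{2\sigma-1}}{\ln\Theta}+\frac{\Theta}{\varepsilon\ln\Theta}\bigl(\Delta^{1/2}\Gamma(\Delta)\bigr)^{2\sigma}+\bigl(\Delta^{1/2}\Gamma(\Delta)\bigr)^{\eta}+\Delta^{1/2}\Gamma(\Delta)+\Delta^{\gamma}
\]
is correct, and your optimization for $\sigma\in(\tfrac12,1)$ (fix $\Theta=2$, take $\varepsilon=\Delta^{1/2}\Gamma(\Delta)$) matches the paper.

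The genuine gap is your treatment of $\sigma=\tfrac12$. You assert that ``the crude bound $U''_{\Theta,\varepsilon}\le \frac{2\Theta}{\varepsilon\ln\Theta}$ is not enough'' and that one must split on $\{|\chi-\bar\chi|\le \varepsilon/\Theta\}$ with a sub-Gaussian/Chebyshev argument. This is wrong: the crude bound \emph{is} enough. The paper simply takes $\varepsilon=\frac{1}{\ln\Delta^{-1}}$ and $\Theta=\Delta^{-1/8}$, so that $\frac{1}{\ln\Theta}=\frac{8}{\ln\Delta^{-1}}$ and
\[
\frac{\Theta}{\varepsilon\ln\Theta}\,\Delta^{1/2}\Gamma(\Delta)
=\frac{\Delta^{-1/8}\ln\Delta^{-1}}{\tfrac18\ln\Delta^{-1}}\,\Delta^{1/2}\Gamma(\Delta)
=8\,\Delta^{3/8}\Gamma(\Delta),
\]
which already delivers the stated $\frac{1}{\ln\Delta^{-1}}+\Delta^{3/8}\Gamma(\Delta)$ without any further splitting. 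Your own suggested pair (``$\varepsilon\sim\Delta^{3/8}\Gamma(\Delta)$, $\ln\Theta\to\infty$'') is not fully specified and, as written, does not make the third term small unless you also pin $\Theta$ down to something like $\Delta^{-1/8}$; the proposed Chebyshev refinement is an unnecessary detour.

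A secondary point: on the stopped interval you bound the polynomial factors $(1+|\chi|^{l_j}+|\bar\chi|^{l_j})$ by $1+2K^{l_j}$, which makes the constant $C$ depend polynomially on $K$. The paper instead separates these factors from $|\chi-\bar\chi|^{\eta}$ (resp.\ $|\chi-\bar\chi|$, $|\chi-\bar\chi|^{2\sigma}$) via H\"older's inequality and then uses the $K$-free moment bounds of Lemmas~\ref{lem3.1} and~\ref{lem3.3}. This is not cosmetic: Theorem~\ref{thm4.3} later chooses $K=(\Delta^{1/2}\Gamma(\Delta))^{-1}\to\infty$, so a $K$-dependent constant here would destroy the rate there.
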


	\begin{proof}
		Let $\delta(t)=z(t)-\chi(t)$ for $t\in[0,T]$. For $0\leq s\leq t\wedge\rho_{*}$, we get
		$$|z(s+\bar{s}_v)|\vee|\chi(s+\bar{s}_v)|\leq K\leq h^{-1}(\Gamma(\Delta)),~~~\forall v\in\mathbb{S}_{r}.$$
		Thus, for $0\leq s\leq t\wedge\rho_{*}$, we see 
		$\alpha_{\Delta}(\varPhi(\bar{\chi}_s))=\alpha(\varPhi(\bar{\chi}_s)), \beta_{\Delta}(\varPhi(\bar{\chi}_s))=\beta(\varPhi(\bar{\chi}_s)).$
		By It\^o's formula and (\ref{ym4.2}), (\ref{ym4.3}), it gives that	
		\begin{equation}\label{JJ}
			\begin{split}
				&\mathbb{E}|\delta(t\wedge\rho_{*})|\\\leq&\varepsilon+\mathbb{E}U_{\Theta,\varepsilon}(\delta(t\wedge\rho_{*}))\\
				\leq&\varepsilon+\mathbb{E}\int_{0}^{t\wedge\rho_{*}}{U'}_{\Theta,\varepsilon}(\delta(s))\big[\alpha(\varPhi(z_s))-\alpha_{\Delta}(\varPhi(\bar{\chi}_s))\big]ds
				+\frac{1}{2}\mathbb{E}\int_{0}^{t\wedge\rho_{*}}{U''}_{\Theta,\varepsilon}(\delta(s))|\beta(z(s))-\beta_{\Delta}(\bar{\chi}(s))|^2 ds\\
				=:&\varepsilon+J_{1}+J_{2}.
			\end{split}
		\end{equation}
		Thus, we derive that 
		\begin{equation}\label{j1}
			\begin{split}
				J_{1}
				=&\mathbb{E}\int_{0}^{t\wedge\rho_{*}}{U'}_{\Theta,\varepsilon}(\delta(s))\big[\alpha(\varPhi(z_s))-\alpha(\varPhi(\bar{\chi}_s))\big]ds\\
				=&\mathbb{E}\int_{0}^{t\wedge\rho_{*}}{U'}_{\Theta,\varepsilon}(\delta(s))\big[\alpha_{1}(z(s))-\alpha_{1}(\bar{\chi}(s))\big]ds
				+\mathbb{E}\int_{0}^{t\wedge\rho_{*}}{U'}_{\Theta,\varepsilon}(\delta(s))\big[\alpha_{2}(z(s))-\alpha_{2}(\bar{\chi}(s))\big]ds\\
				&+\mathbb{E}\int_{0}^{t\wedge\rho_{*}}{U'}_{\Theta,\varepsilon}(\delta(s))\big[\alpha_{3}(\varPhi(z_s))-\alpha_{3}(\varPhi(\bar{\chi}_s))\big]ds\\
				=:&J_{11}+J_{12}+J_{13}.\\
			\end{split}
		\end{equation}
		By Assumption \ref{a1}, H\"older's inequality and (\ref{ym4.2}), we infer that
		\begin{equation}\label{j11}
			\begin{split}
				J_{11}=&\mathbb{E}\int_{0}^{t\wedge\rho_{*}}{U'}_{\Theta,\varepsilon}(\delta(s))\big[\alpha_{1}(z(s))-\alpha_{1}(\bar{\chi}(s))\big]ds\\
				\leq&\mathbb{E}\int_{0}^{t\wedge\rho_{*}}{U'}_{\Theta,\varepsilon}(\delta(s))\big[\alpha_{1}(z(s))-\alpha_{1}({\chi}(s))\big]ds
				+\mathbb{E}\int_{0}^{t\wedge\rho_{*}}{U'}_{\Theta,\varepsilon}(\delta(s))\big[\alpha_{1}(\chi(s))-\alpha_{1}(\bar{\chi}(s))\big]ds\\
				\leq&L_1\int_{0}^{t}\mathbb{E}\big(1+|\chi(s)|^{l_1}+|\bar{\chi}(s)|^{l_1}\big)|\chi(s)-\bar{\chi}(s)|^{\eta}ds\\
				\leq&L_{1}\int_{0}^{t}\big(1+\mathbb{E}|\chi(s)|^{p}+\mathbb{E}|\bar{\chi}(s)|^{p}\big)^{\frac{l_1}{p}}\big(\mathbb{E}|\chi(s)-\bar{\chi}(s)|^{\frac{\eta p}{p-l_1}}\big)^{\frac{p-l_1}{p}}ds\\
				\leq&C\big(\mathbb{E}|\chi(s)-\bar{\chi}(s)|^{\frac{\eta p}{p-l_1}}\big)^{\frac{p-l_1}{p}}
				\leq C(\Delta^\frac{1}{2}\Gamma(\Delta))^\eta.
			\end{split}
		\end{equation}
		where we utilize the fact that ${U'}_{\Theta,\varepsilon}(\delta(s))[\alpha_{1}(z(s))-\alpha_{1}({\chi}(s))]\leq 0$, since $${U'}_{\Theta,\varepsilon}(a-b)\geq0,~~~~~\alpha_{1}(a)-\alpha_{1}(b)\leq0,~~~~~a\geq b;$$
		$${U'}_{\Theta,\varepsilon}(a-b)<0,~~~~~\alpha_{1}(a)-\alpha_{1}(b)\geq0,~~~~~a< b.$$	
		We get from Assumptions \ref{a1}, \ref{a4} and (\ref{ym4.2}) that 
		\begin{equation}\label{j12}
			\begin{split}
				J_{12}=&\mathbb{E}\int_{0}^{t\wedge\rho_{*}}{U'}_{\Theta,\varepsilon}(\delta(s))\big[\alpha_{2}(z(s))-\alpha_{2}(\bar{\chi}(s))\big]ds\\
				=&\mathbb{E}\int_{0}^{t\wedge\rho_{*}}{U'}_{\Theta,\varepsilon}(\delta(s))\big[\alpha_{2}(z(s))-\alpha_{2}(\chi(s))\big]ds
				+\mathbb{E}\int_{0}^{t\wedge\rho_{*}}{U'}_{\Theta,\varepsilon}(\delta(s))\big[\alpha_{2}(\chi(s))-\alpha_{2}(\bar{\chi}(s))\big]ds\\
				\leq&\mathbb{E}\int_{0}^{t\wedge\rho_{*}}\frac{{U'}_{\Theta,\varepsilon}(\delta(s))}{z(s)-\chi(s)}\big(z(s)-\chi(s)\big)\big[\alpha_{2}(z(s))-\alpha_{2}(\chi(s))\big]ds\\
				&+L_{3}\mathbb{E}\int_{0}^{t\wedge\rho_{*}}\big(1+|\chi(s)|^{l_2}+|\bar{\chi}(s)|^{l_2}\big)|\chi(s)-\bar{\chi}(s)|ds\\
				\leq&L_{1}\int_{0}^{t}\mathbb{E}|z(s\wedge\rho_{*})-{\chi}(s\wedge\rho_{*})|ds+L_{3}\int_{0}^{t}\big(1+\mathbb{E}|\chi(s)|^{p}+\mathbb{E}|\bar{\chi}(s)|^{p}\big)^\frac{l_2}{p}\big(\mathbb{E}|\chi(s)-\bar{\chi}(s)|^\frac{p}{p-l_2}\big)^\frac{p-l_2}{p}ds\\
				\leq&C\int_{0}^{t}\mathbb{E}|\delta(s\wedge\rho_{*})|ds+C\Delta^\frac{1}{2}\Gamma(\Delta),
			\end{split}
		\end{equation}
		and
		\begin{equation}\label{j13}
			\begin{split}
				J_{13}=&\mathbb{E}\int_{0}^{t\wedge\rho_{*}}{U'}_{\Theta,\varepsilon}(\delta(s))\big[\alpha_{3}(\varPhi(z_s))-\alpha_{3}(\varPhi(\bar{\chi}_s))\big]ds\\
				\leq&L_{1}\mathbb{E}\int_{0}^{t\wedge\rho_{*}}\sum_{v=1}^{r}|z(s+\bar{s}_v)-\bar{\chi}(s+\bar{s}_v)|ds\\
				\leq&rL_{1}\mathbb{E}\int_{0}^{t\wedge\rho_{*}}|z(s)-\bar{\chi}(s)|ds+(r-1)L_{1}\int_{-\tau}^{0}|\xi(s)-\xi(\lfloor\frac{s}{\Delta}\rfloor\Delta)|ds\\
				\leq&C\int_{0}^{t}\mathbb{E}|\delta(s\wedge\rho_{*})|ds+C\mathbb{E}\int_{0}^{t\wedge\rho_{*}}|\chi(s)-\bar{\chi}(s)|ds+C\Delta^{\gamma}\\
				\leq&C\Big(\int_{0}^{t}\mathbb{E}|\delta(s\wedge\rho_{*})|ds+\Delta^\frac{1}{2}\Gamma(\Delta)+\Delta^{\gamma}\Big).
			\end{split}
		\end{equation}
		Substituting (\ref{j11}), (\ref{j12}) and (\ref{j13}) into (\ref{j1}) gives
		\begin{equation}
			\begin{split}
				J_{1}\leq C\left((\Delta^\frac{1}{2}\Gamma(\Delta))^\eta+\int_{0}^{t}\mathbb{E}|\delta(s\wedge\rho_{*})|ds+{\Delta}^{\gamma}\right).
			\end{split}
		\end{equation}
		By (\ref{ym4.4}), Assumption \ref{a4}, Lemmas \ref{lem3.1} and \ref{lem3.3}, we have
		\begin{equation*}
			\begin{split}
				J_2
				=&\frac{1}{2}\mathbb{E}\int_{0}^{t\wedge\rho_{*}}{U''}_{\Theta,\varepsilon}(\delta(s))|\beta(z(s))-\beta(\bar{\chi}(s))|^2 ds\\
				\leq&L_{3}^2\mathbb{E}\int_{0}^{t\wedge\rho_{*}}\frac{1}{|\delta(s)|\ln\Theta}\mathbb{I}_{[\frac{\varepsilon}{\Theta},\varepsilon]}(|\delta(s)|) \big(1+|z(s)|^{2l_3}+|\chi(s)|^{2l_3}\big)|z(s)-\chi(s)|^{2\sigma}ds\\
				&+L_{3}^2\mathbb{E}\int_{0}^{t\wedge\rho_{*}}\frac{1}{|\delta(s)|\ln\Theta}\mathbb{I}_{[\frac{\varepsilon}{\Theta},\varepsilon]}(|\delta(s)|) \big(1+|\chi(s)|^{2l_3}+|\bar{\chi}(s)|^{2l_3}\big)|\chi(s)-\bar{\chi}(s)|^{2\sigma}ds\\
				\leq&L_{3}^2\mathbb{E}\int_{0}^{t\wedge\rho_{*}}\frac{\varepsilon^{2\sigma-1}}{\ln\Theta}\big(1+|z(s)|^{2l_3}+|\chi(s)|^{2l_3}\big)ds
				\\&+L_{3}^2\mathbb{E}\int_{0}^{t\wedge\rho_{*}}\frac{\Theta}{\varepsilon\ln\Theta}\big(1+|\chi(s)|^{2l_3}+|\bar{\chi}(s)|^{2l_3}\big)|\chi(s)-\bar{\chi}(s)|^{2\sigma} ds\\
				\leq&L_{3}^2\big(\frac{\varepsilon^{2\sigma-1}}{\ln\Theta}\big)\int_{0}^{t}\big(1+\mathbb{E}|z(s)|^{2l_3}+\mathbb{E}|\chi(s)|^{2l_3}\big)ds\\
				&+L_{3}^2\big(\frac{\Theta}{\varepsilon\ln\Theta}\big)\int_{0}^{t}\Big(1+\mathbb{E}|\chi(s)|^{p}+\mathbb{E}|\bar{\chi}(s)|^{p}\Big)^\frac{2l_3}{p}\Big(\mathbb{E}|\chi(s)-\bar{\chi}(s)|^\frac{2\sigma p}{p-2l_3} \Big)^\frac{p-2l_3}{p}ds\\
				\leq&C\big(\frac{\varepsilon^{2\sigma-1}}{\ln\Theta}\big)+C\big(\frac{\Theta}{\varepsilon\ln\Theta}\big)(\Delta^{\frac{1}{2}}\Gamma(\Delta))^{2\sigma}.
			\end{split}
		\end{equation*}
		Then combining the estimations of $J_1$ and $J_2$ with the Gronwall inequality yields that
		%

$$				\mathbb{E}|\delta(t\wedge\rho_{*})|
				\leq C\Big(\varepsilon+(\Delta^{\frac{1}{2}}\Gamma(\Delta))^{\eta}+\Delta^{\gamma}+\frac{{\varepsilon}^{2\sigma-1}}{\ln{\Theta}}+\frac{\Theta}{\varepsilon\ln{\Theta}}(\Delta^{\frac{1}{2}}\Gamma(\Delta))^{2\sigma}
				\Big).$$
		When $\sigma=\frac{1}{2}$, choosing $\varepsilon=\frac{1}{\ln\Delta^{-1}}$ and $\Theta=\Delta^{-\frac{1}{8}}$ gives that
	$$	\mathbb{E}|\delta(t\wedge\rho_{*})|
				\leq C\Big(\frac{1}{\ln\Delta^{-1}}+\big(\Delta^{\frac{1}{2}}\Gamma(\Delta)\big)^{\eta}+\Delta^{\frac{3}{8}}\Gamma(\Delta)+\Delta^{\gamma}
				\Big).$$
		When $\sigma\in(\frac{1}{2},1)$, by taking $\varepsilon=\Delta^\frac{1}{2}\Gamma(\Delta)$, $\Theta=2$, we have
		$$\mathbb{E}|\delta(t\wedge\rho_{*})|
		\leq C\Big(\big(\Delta^{\frac{1}{2}}\Gamma(\Delta)\big)^{{(2\sigma-1)}\wedge{\eta}}
		+\Delta^\gamma\Big).$$
	\end{proof}

	\begin{thm}\label{thm4.3}
		Let the assumptions in Lemma \ref{lemma4.2} hold. For every sufficiently small $\Delta\in(0,1]$, if  $\Gamma(\Delta)\geq h\Big(\big(\Delta^\frac{1}{2}\Gamma(\Delta)\big)^{-1}\Big)$, then 
		\begin{equation*}
			\begin{split}
				\mathbb{E}|z(T)-\chi(T)|\leq
				\left\{\begin{array}{ll}
					C\Big(\frac{1}{\ln\Delta^{-1}}+\big(\Delta^{\frac{1}{2}}\Gamma(\Delta)\big)^{\eta}+\Delta^{\frac{3}{8}}\Gamma(\Delta)+\Delta^{\gamma}
					\Big), & \text {if }  \sigma=\frac{1}{2}, \\			
					~~~\\	C\Big(\big(\Delta^{\frac{1}{2}}\Gamma(\Delta)\big)^{{(2\sigma-1)}\wedge{\eta}}
					+\Delta^\gamma\Big), & \text {if }  \sigma\in(\frac{1}{2},1).
				\end{array}\right.
			\end{split}
		\end{equation*}
		
	\end{thm}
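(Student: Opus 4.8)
The plan is to remove the stopping time $\rho_{*}$ from the estimate of Lemma \ref{lemma4.2} by a standard localization argument, splitting $\mathbb{E}|z(T)-\chi(T)|$ according to whether $\rho_{*}$ exceeds $T$ or not, and then choosing the truncation level $K$ so that the tail contribution matches the rate already obtained inside the stopped region.

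First I would fix $K:=\big(\Delta^{\frac12}\Gamma(\Delta)\big)^{-1}$. Since $\Delta^{\frac14}\Gamma(\Delta)\le L_{0}$ forces $\Delta^{\frac12}\Gamma(\Delta)=\Delta^{\frac14}\cdot\Delta^{\frac14}\Gamma(\Delta)\to0$ as $\Delta\to0$, this $K$ tends to $\infty$, so $K>\|\xi\|$ for every sufficiently small $\Delta$; moreover the hypothesis $\Gamma(\Delta)\ge h\big((\Delta^{\frac12}\Gamma(\Delta))^{-1}\big)=h(K)$, upon applying the increasing function $h^{-1}$, gives $h^{-1}(\Gamma(\Delta))\ge K$. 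Hence Lemma \ref{lemma4.2} applies with this $K$, and with $\rho_{*}=\rho_{k}\wedge\rho_{\Delta,k}$.

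Next I would write
\begin{equation*}
\mathbb{E}|z(T)-\chi(T)|=\mathbb{E}\big[|z(T)-\chi(T)|\mathbb{I}_{\{\rho_{*}>T\}}\big]+\mathbb{E}\big[|z(T)-\chi(T)|\mathbb{I}_{\{\rho_{*}\le T\}}\big].
\end{equation*}
On $\{\rho_{*}>T\}$ one has $T\wedge\rho_{*}=T$, so $z(T)=z(T\wedge\rho_{*})$ and $\chi(T)=\chi(T\wedge\rho_{*})$, and the first term is bounded by $\mathbb{E}|z(T\wedge\rho_{*})-\chi(T\wedge\rho_{*})|$, i.e. by the right-hand side displayed in Lemma \ref{lemma4.2}. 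For the second term I would apply the Cauchy--Schwarz inequality:
\begin{equation*}
\mathbb{E}\big[|z(T)-\chi(T)|\mathbb{I}_{\{\rho_{*}\le T\}}\big]\le\big(\mathbb{E}|z(T)-\chi(T)|^{2}\big)^{\frac12}\big(\mathbb{P}(\rho_{*}\le T)\big)^{\frac12}.
\end{equation*}
By Lemmas \ref{lem3.1} and \ref{lem3.3} (with $p=2\le\check p$) the first factor is bounded by a constant independent of $\Delta$, while $\mathbb{P}(\rho_{*}\le T)\le\mathbb{P}(\rho_{k}\le T)+\mathbb{P}(\rho_{\Delta,k}\le T)\le 2C/K^{2}$ by Lemma \ref{lemma4.1}; hence this term is at most $C/K=C\,\Delta^{\frac12}\Gamma(\Delta)$.

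Finally I would combine the two bounds and observe that the extra term $\Delta^{\frac12}\Gamma(\Delta)$ is absorbed. Indeed, for small $\Delta$ one has $\Delta^{\frac12}\Gamma(\Delta)<1$, so since $(2\sigma-1)\wedge\eta<1$ it is dominated by $\big(\Delta^{\frac12}\Gamma(\Delta)\big)^{(2\sigma-1)\wedge\eta}$ when $\sigma\in(\frac12,1)$; when $\sigma=\frac12$ it is dominated by $\big(\Delta^{\frac12}\Gamma(\Delta)\big)^{\eta}$ (as $\eta<1$) and also by $\Delta^{\frac38}\Gamma(\Delta)$ (as $\Delta^{\frac12}\le\Delta^{\frac38}$). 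In either case the sum collapses to exactly the right-hand side of Lemma \ref{lemma4.2}, which is the assertion of the theorem. The only point requiring care is the bookkeeping of the step-size condition: one must check that $K=(\Delta^{\frac12}\Gamma(\Delta))^{-1}$ is simultaneously large enough that $h^{-1}(\Gamma(\Delta))\ge K$ — which is precisely the assumed inequality — and small enough that $C/K$ does not exceed the rate already present in Lemma \ref{lemma4.2}; everything else is the routine split-at-a-stopping-time argument.
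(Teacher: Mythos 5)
Your proposal is correct and follows essentially the same route as the paper: split at the stopping time $\rho_{*}$, bound the tail event via Lemma \ref{lemma4.1} and the moment bounds, choose $K=\big(\Delta^{\frac12}\Gamma(\Delta)\big)^{-1}$ so that the hypothesis $\Gamma(\Delta)\geq h(K)$ guarantees $h^{-1}(\Gamma(\Delta))\geq K$, and absorb the resulting $C\Delta^{\frac12}\Gamma(\Delta)$ into the rate from Lemma \ref{lemma4.2}. The only cosmetic difference is that you use Cauchy--Schwarz where the paper uses Young's inequality with a free parameter $\lambda$ (then optimized to $\lambda=\Delta^{\frac12}\Gamma(\Delta)$), which yields the same bound.
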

	\begin{proof}
		For any $\lambda>0$, by Lemma \ref{lemma4.1} and the following Young's inequality, 
		$$ab\leq\frac{\lambda}{q}a^{q}+\frac{q-1}{q\lambda^\frac{1}{q-1}}b^\frac{q}{q-1},~~~~\forall q>1,$$
		we have
		\begin{equation*}
			\begin{split}
				\mathbb{E}|\delta(t)|=\mathbb{E}|\delta(t)\mathbb{I}_{\{\rho_{*}>T\}}|+\mathbb{E}|\delta(t)\mathbb{I}_{\{\rho_{*}\leq T\}}|
				\leq&\mathbb{E}|\delta(t)\mathbb{I}_{\{\rho_{*}>T\}}|+\frac{\lambda}{2}\mathbb{E}|\delta(t)|^2+\frac{1}{2\lambda}\mathbb{P}(\rho_{*}\leq T)\\					\leq&\mathbb{E}|\delta(t)\mathbb{I}_{\{\rho_{*}>T\}}|+\frac{C\lambda}{2}+\frac{C}{2\lambda K^2}.
			\end{split}
		\end{equation*}
		Note that $h^{-1}\big(\Gamma(\Delta)\big)\geq\big(\Delta^\frac{1}{2}\Gamma(\Delta)\big)^{-1}$, since $\Gamma(\Delta)\geq h\Big(\big(\Delta^\frac{1}{2}\Gamma(\Delta)\big)^{-1}\Big)$. So taking $\lambda=\Delta^\frac{1}{2}\Gamma(\Delta)$ and $K=\big(\Delta^\frac{1}{2}\Gamma(\Delta)\big)^{-1}$ leads to 
		$$	\mathbb{E}|\delta(t)|\leq\mathbb{E}|\delta(t)\mathbb{I}_{\{\rho_{*}>T\}}|+C\Delta^\frac{1}{2}\Gamma(\Delta).$$
		According to Lemmas \ref{lemma4.1} and \ref{lemma4.2}, we obtain the desired result.
	\end{proof}
	
	For any $\epsilon\in(0,\frac{1}{4}]$,  there exists a subinterval $[\Delta_{1},\Delta_{2}]\subset(0,1]$ such that  $\frac{1}{\ln{\Delta^{-1}}}\leq\Delta^{\frac{3}{8}-\epsilon}$ holds for $\Delta\in[\Delta_{1},\Delta_{2}]$. We will use this technique to give the convergence rate in a special case in the following theorem.
	\begin{thm}\label{LL11}
		Let Assumptions \ref{a1}-\ref{a3} hold. Suppose that $p\geq\big[(2l_1+2\eta)\vee (2l_2+2)\vee(4l_3+4\sigma)\vee \frac{2l_3+2\sigma-1+\varPi}{\varPi}\big]$ for  any $\varPi\in (0,1)$. Let $l*=\big[(2l_1+2\eta-p)\vee (2l_2+2-p)\vee (2l_3+2\sigma-p)\big]$. Then for any $\Delta\in(0,1]$, we derive that
		\begin{equation*}
			\begin{split}
				\mathbb{E}|z(T)-\chi(T)|^2\leq
				\left\{\begin{array}{ll}
					C\Big(\Xi_1(\Delta)+\Xi_2(\Delta)
					+\Delta^{-\frac{1}{4}}\Xi_3(\Delta)
					\Big), & \text {if }  \sigma=\frac{1}{2}, \\   
					~~~\\ C\Big(\Xi_4(\Delta)+\Xi_2(\Delta)
					+\big(\Delta^\frac{1}{2}\Gamma(\Delta)\big)^{-2}\Xi_3(\Delta)\Big), & \text {if }  \sigma\in(\frac{1}{2},1),
				\end{array}\right.
			\end{split}
		\end{equation*}
		where
		$$\Xi_1(\Delta):=\Big(\frac{1}{\ln\Delta^{-1}}+\big(\Delta^{\frac{1}{2}}\Gamma(\Delta)\big)^{\eta}+\Delta^{\frac{3}{8}}\Gamma(\Delta)+\Delta^{\gamma}\Big)^{1-\varPi},$$ $$\Xi_2(\Delta):=\Big(h^{-1}(\Gamma(\Delta))\Big)^{l*},$$ $$\Xi_3(\Delta):=\Big(h^{-1}(\Gamma(\Delta))\Big)^{4l_3+4\sigma-p},$$
		$$\Xi_4(\Delta):=\Big(\big(\Delta^{\frac{1}{2}}\Gamma(\Delta)\big)^{({2\sigma-1})\wedge{\eta}}
		+\Delta^\gamma\Big)^{1-\varPi}.
		$$
		In particular, define
		$$h(w)=Lw^{\upsilon}, ~w>1,~~\text{and}~~\Gamma(\Delta)=\Delta^{-\epsilon},~\epsilon\in(0,\frac{1}{4}].$$
		Here, $L=L_{1}+2L_{3}+|\alpha_{1}(0)|+|\alpha_{2}(0)|+|\beta(0)|$, and $\upsilon=(l_1\vee l_2\vee l_3)+1.$ 
		
		Then, when $\sigma=\frac{1}{2}$, we have
		$$\mathbb{E}|z(T)-\chi(T)|^{2}\leq C\Delta^{\vartheta_{1}^{*}},~~~~~\Delta\in[\Delta_{1},\Delta_{2}],$$
		where $\vartheta_{1}^{*}=\big\{(1-\varPi)\big[(\frac{\eta}{2}-\eta\epsilon)\wedge(\frac{3}{8}-\epsilon)\wedge\gamma\big]\big\}\wedge\big\{-\frac{\epsilon l*}{\upsilon}\big\}\wedge\big\{\frac{\epsilon(p-4l_3-4\sigma)}{\upsilon}-\frac{1}{4}\big\}$.
		
		When $\sigma\in(\frac{1}{2},1)$, we have
		$$\mathbb{E}|z(T)-\chi(T)|^{2}\leq C\Delta^{\vartheta_{2}^{*}},~~~~~\Delta\in(0,1],$$
		where $\vartheta_{2}^{*}=\big\{(1-\varPi)\big[(\frac{\eta}{2}-\eta\epsilon)\wedge((\frac{1}{2}-\epsilon)(2\sigma-1))\wedge\gamma\big]\big\}\wedge\big\{-\frac{\epsilon l*}{\upsilon}\big\}\wedge\big\{\frac{\epsilon(p-4l_3-4\sigma)-\upsilon+2\upsilon\epsilon}{\upsilon}\big\}$.
	\end{thm}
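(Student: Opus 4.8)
The plan is to follow the structure of the proof of Theorem~\ref{thm4.3}, now in the $\mathcal{L}^{2}$ setting, and to convert the stopped $\mathcal{L}^{1}$ estimate of Lemma~\ref{lemma4.2} into an unstopped $\mathcal{L}^{2}$ estimate by interpolating it against the $\mathcal{L}^{p}$ moment bounds. Write $\delta(t)=z(t)-\chi(t)$, fix a sufficiently small $\Delta\in(0,1]$, and take the truncation radius in the stopping times of Lemma~\ref{lemma4.1} to be $K=h^{-1}(\Gamma(\Delta))$; this is the largest $K$ for which $|z(s+\bar{s}_{v})|\vee|\chi(s+\bar{s}_{v})|\le h^{-1}(\Gamma(\Delta))$ on $[0,\rho_{*}]$, so that Lemma~\ref{lemma4.2} applies with this $K$. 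Then decompose
\begin{equation*}
\mathbb{E}|z(T)-\chi(T)|^{2}=\mathbb{E}\big[|\delta(T)|^{2}\mathbb{I}_{\{\rho_{*}>T\}}\big]+\mathbb{E}\big[|\delta(T)|^{2}\mathbb{I}_{\{\rho_{*}\le T\}}\big]=:I_{1}+I_{2},
\end{equation*}
with $\rho_{*}=\rho_{k}\wedge\rho_{\Delta,k}$.

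For $I_{1}$, observe that on $\{\rho_{*}>T\}$ we have $\delta(T)=\delta(T\wedge\rho_{*})$ and $T\wedge\rho_{*}=T$, so I would apply H\"older's inequality with conjugate exponents $1/(1-\varPi)$ and $1/\varPi$ to the splitting $|\delta(T)|^{2}\mathbb{I}_{\{\rho_{*}>T\}}=|\delta(T\wedge\rho_{*})|^{1-\varPi}\,|\delta(T)|^{1+\varPi}\mathbb{I}_{\{\rho_{*}>T\}}$, which gives
\begin{equation*}
I_{1}\le\big(\mathbb{E}|\delta(T\wedge\rho_{*})|\big)^{1-\varPi}\big(\mathbb{E}|\delta(T)|^{(1+\varPi)/\varPi}\big)^{\varPi}.
\end{equation*}
The first factor is controlled by Lemma~\ref{lemma4.2}, and the second is a finite constant independent of $\Delta$ because $|\delta(T)|\le|z(T)|+|\chi(T)|$ and the lower bound on $p$ guarantees $(1+\varPi)/\varPi\le p$, so Lemmas~\ref{lem3.1} and~\ref{lem3.3} apply. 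This produces $I_{1}\le C\,\Xi_{1}(\Delta)$ when $\sigma=\frac{1}{2}$ and $I_{1}\le C\,\Xi_{4}(\Delta)$ when $\sigma\in(\frac{1}{2},1)$.

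For $I_{2}$, I would bound $|\delta(T)|^{2}\le 2(|z(T)|^{2}+|\chi(T)|^{2})$ and estimate $\mathbb{E}[(\cdot)\mathbb{I}_{\{\rho_{*}\le T\}}]$ by H\"older's inequality, placing the polynomially growing factors --- whose orders $2l_{1}+2\eta$, $2l_{2}+2$, $2l_{3}+2\sigma$ and $4l_{3}+4\sigma$ are inherited from the drift and (squared) diffusion estimates used in Lemma~\ref{lemma4.2} --- into $\mathcal{L}^{p/(\cdot)}$ norms that stay finite, by Lemmas~\ref{lem3.1},~\ref{lem3.3},~\ref{eseb} and~\ref{esnb}, under the stated restrictions on $p$, while the remaining factor goes into $\mathbb{P}(\rho_{*}\le T)\le C/K^{2}=C(h^{-1}(\Gamma(\Delta)))^{-2}$ from Lemma~\ref{lemma4.1}. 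Matching the surviving powers of $h^{-1}(\Gamma(\Delta))$ across the pieces yields precisely the exponents $l*$ and $4l_{3}+4\sigma-p$, that is the terms $C\,\Xi_{2}(\Delta)$ and $C\,\Delta^{-1/4}\Xi_{3}(\Delta)$ (respectively $C\,(\Delta^{1/2}\Gamma(\Delta))^{-2}\Xi_{3}(\Delta)$), the extra $\Delta$-prefactor recording the growth of $\Gamma(\Delta)$ that enters through the truncated coefficients, exactly as $\Delta^{3/8}\Gamma(\Delta)$ and $\Delta^{1/2}\Gamma(\Delta)$ did in Lemma~\ref{lemma4.2}. Adding $I_{1}$ and $I_{2}$ gives the two displayed general bounds.

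Finally, for the ``in particular'' part I would substitute $h(w)=Lw^{\upsilon}$ and $\Gamma(\Delta)=\Delta^{-\epsilon}$, so that $h^{-1}(\Gamma(\Delta))=(\Delta^{-\epsilon}/L)^{1/\upsilon}$ and every occurrence of $h^{-1}(\Gamma(\Delta))$ and of $\Delta^{1/2}\Gamma(\Delta)$ turns into an explicit power of $\Delta$; on the subinterval $[\Delta_{1},\Delta_{2}]$ the term $1/\ln\Delta^{-1}$ is replaced by $\Delta^{3/8-\epsilon}$, and the exponent $\vartheta_{1}^{*}$ (resp. $\vartheta_{2}^{*}$) comes out as the minimum of the exponents of $\Xi_{1}$ (resp. $\Xi_{4}$), of $\Xi_{2}$, and of $\Delta^{-1/4}\Xi_{3}$ (resp. $(\Delta^{1/2}\Gamma(\Delta))^{-2}\Xi_{3}$). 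The main obstacle I expect is the bookkeeping in $I_{2}$: the H\"older exponents must be chosen so that, simultaneously for all four growth orders, the high-moment factors remain finite --- which is exactly what forces the several lower bounds on $p$, including the $\varPi$-dependent one $p\ge(2l_{3}+2\sigma-1+\varPi)/\varPi$ --- while the powers of $h^{-1}(\Gamma(\Delta))$ and the $\Delta$-prefactors must recombine into the single clean quantities $\Xi_{2}$ and $\Xi_{3}$; by contrast the interpolation step for $I_{1}$ is routine once Lemma~\ref{lemma4.2} and the moment bounds are available.
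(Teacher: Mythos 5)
Your treatment of $I_1$ is a legitimate shortcut: on $\{\rho_{*}>T\}$ the interpolation $|\delta(T)|^{2}=|\delta(T\wedge\rho_{*})|^{1-\varPi}|\delta(T)|^{1+\varPi}$, combined with H\"older and the finiteness of $\mathbb{E}|\delta(T)|^{(1+\varPi)/\varPi}$ (indeed $(1+\varPi)/\varPi<(2l_{3}+2\sigma-1+\varPi)/\varPi\le p$), does convert Lemma \ref{lemma4.2} into $C\,\Xi_{1}(\Delta)$, resp. $C\,\Xi_{4}(\Delta)$. The genuine gap is in $I_{2}$. On $\{\rho_{*}\le T\}$ you perform no It\^o computation, so the growth orders $2l_{1}+2\eta$, $2l_{2}+2$, $2l_{3}+2\sigma$, $4l_{3}+4\sigma$ that you invoke simply never appear there; the only smallness at your disposal is $\mathbb{P}(\rho_{*}\le T)\le CK^{-2}$ with $K=h^{-1}(\Gamma(\Delta))$, and after H\"older against the highest admissible moment $\mathbb{E}|\delta(T)|^{p}$ the best you can extract is $I_{2}\le C\big(h^{-1}(\Gamma(\Delta))\big)^{-2(p-2)/p}$, an exponent bounded below by $-2$. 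The theorem's exponents $l*$ and $4l_{3}+4\sigma-p$ scale like $-p$ for large $p$, so your bound is strictly weaker in general; one can pick admissible parameters (e.g.\ $p$ large, $\upsilon$ moderately large, $\epsilon=\frac14$) for which $2\epsilon(p-2)/(p\upsilon)<\vartheta_{1}^{*}$, so the claimed rate in the ``in particular'' part is out of reach of this decomposition.

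The reason is structural: $\Xi_{2}$ and $\Xi_{3}$ do not come from the exit probability at all. In the paper's proof one applies It\^o's formula to $U_{\Theta,\varepsilon}(\delta(\cdot))$ up to an auxiliary stopping time $\rho_{n}$ (removed at the end by Fatou), squares, and must then control the truncation error of the scheme itself, namely $\alpha(\varPhi(\chi_{s}))-\alpha_{\Delta}(\varPhi(\chi_{s}))$ and $\beta(\chi(s))-\beta_{\Delta}(\chi(s))$. These differences are supported on $\{|\chi(s)|\ge h^{-1}(\Gamma(\Delta))\}$ and are estimated by Chebyshev's inequality against the full $p$-th moment of $\chi(s)$, which is exactly what produces the exponents $2l_{1}+2\eta-p$, $2l_{2}+2-p$, $2l_{3}+2\sigma-p$ and $4l_{3}+4\sigma-p$, while the prefactor $\big(\Theta/(\varepsilon\ln\Theta)\big)^{2}$ becomes $\Delta^{-1/4}$ or $\big(\Delta^{1/2}\Gamma(\Delta)\big)^{-2}$ after the choice of $\varepsilon,\Theta$; the $\varPi$-interpolation is applied not to an indicator decomposition but to the cross term $\mathbb{E}\int(1+|z|^{2l_{3}}+|\chi|^{2l_{3}})|z-\chi|^{2\sigma}ds$, reducing it to the $\mathcal{L}^{1}$ rate. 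Your decomposition hides the truncation error behind $\rho_{*}$ and pays for it with an exit probability that is too expensive; to prove the stated bounds you need to carry out the $\mathcal{L}^{2}$ Yamada--Watanabe/It\^o argument directly, including the stochastic integral via the It\^o isometry and a Gronwall step.
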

	
	\begin{proof}
		For $t\in[0,T]$, let $\delta(t)=z(t)-\chi(t)$. For every integer $n>\|\xi\|$, define the stopping time
		$$\rho_{n}=\inf\{t\in[0,T]: |z(t)|\vee|\chi(t)|\geq n\}.$$
		Using It\^o's formula leads to
		\begin{equation*}
			\begin{split}
				&U_{\Theta,\varepsilon}(\delta(t\wedge\rho_{n}))\\
				=&\int_{0}^{t\wedge\rho_{n}}{U'}_{\Theta,\varepsilon}(\delta(s))\big[\alpha(\varPhi(z_s))-\alpha_{\Delta}(\varPhi(\bar{\chi}_s))\big]ds
				+\frac{1}{2}\int_{0}^{t\wedge\rho_{n}}{U''}_{\Theta,\varepsilon}(\delta(s))|\beta(z(s))-\beta_{\Delta}(\bar{\chi}(s))|^2 ds\\
				&+\int_{0}^{t\wedge\rho_{n}}{U'}_{\Theta,\varepsilon}(\delta(s))[\beta(z(s))-\beta_{\Delta}(\bar{\chi}(s))]dB(s)\\
				=:&S_{1}+S_{2}+S_{3}.
			\end{split}
		\end{equation*}
		We see that 
		\begin{equation*}
			\begin{split}
				S_1=&\int_{0}^{t\wedge\rho_{n}}{U'}_{\Theta,\varepsilon}(\delta(s))\big[\alpha(\varPhi(z_s))-\alpha_{\Delta}(\varPhi(\bar{\chi}_s))\big]ds\\
				=&\int_{0}^{t\wedge\rho_{n}}{U'}_{\Theta,\varepsilon}(\delta(s))\big[\alpha(\varPhi(z_s))-\alpha(\varPhi({\chi}_s))\big]ds
				+\int_{0}^{t\wedge\rho_{n}}{U'}_{\Theta,\varepsilon}(\delta(s))\big[\alpha(\varPhi(\chi_s))-\alpha_{\Delta}(\varPhi({\chi}_s))\big]ds\\
				&+\int_{0}^{t\wedge\rho_{n}}{U'}_{\Theta,\varepsilon}(\delta(s))\big[\alpha_{\Delta}(\varPhi(\chi_s))-\alpha_{\Delta}(\varPhi(\bar{\chi}_s))\big]ds\\
				=:&S_{11}+S_{12}+S_{13}.
			\end{split}
		\end{equation*}
		Similar to the estimation of $J_1$ in Lemma \ref{lemma4.2}, we get from Assumptions \ref{a1}, \ref{a4} and (\ref{ym4.2}) that
		\begin{equation*}
			\begin{split}
				S_{11}=&\int_{0}^{t\wedge\rho_{n}}{U'}_{\Theta,\varepsilon}(\delta(s))\big[\alpha(\varPhi(z_s))-\alpha(\varPhi({\chi}_s))\big]ds\\
				=&	\int_{0}^{t\wedge\rho_{*}}{U'}_{\Theta,\varepsilon}(\delta(s))\big[\alpha_{1}(z(s))-\alpha_{1}(\chi(s))\big]ds
				\\&+\int_{0}^{t\wedge\rho_{*}}{U'}_{\Theta,\varepsilon}(\delta(s))\big[\alpha_{2}(z(s))-\alpha_{2}(\chi(s))\big]ds\\
				&+\int_{0}^{t\wedge\rho_{n}}{U'}_{\Theta,\varepsilon}(\delta(s))\big[\alpha_{3}(\varPhi(z_s))-\alpha_{3}(\varPhi({\chi}_s))\big]ds\\
				\leq&L_1\int_{0}^{t\wedge\rho_{n}}|z(s)-\chi(s)|ds
				+L_{1}\int_{0}^{t\wedge\rho_{n}}\sum_{v=1}^{r}|z(s+\bar{s}_v)-\chi(s+\bar{s}_v)|ds\\
				\leq&C\int_{0}^{t\wedge\rho_{n}}|z(s)-\chi(s)|ds,
			\end{split}
		\end{equation*}
		\begin{equation*}
			\begin{split}
				S_{12}=&\int_{0}^{t\wedge\rho_{n}}{U'}_{\Theta,\varepsilon}(\delta(s))\big[\alpha(\varPhi(\chi_s))-\alpha_{\Delta}(\varPhi({\chi}_s))\big]ds\\
				\leq&\int_{0}^{t\wedge\rho_{n}}{U'}_{\Theta,\varepsilon}(\delta(s))\big[\alpha_{1}(\chi(s))-\alpha_{1}(\varpi_{\Delta}(\chi(s)))\big]ds
				\\&+\int_{0}^{t\wedge\rho_{n}}{U'}_{\Theta,\varepsilon}(\delta(s))\big[\alpha_{2}(\chi(s))-\alpha_{2}(\varpi_{\Delta}(\chi(s)))\big]ds\\
				\leq&L_{1}\int_{0}^{t\wedge\rho_{n}}\big(1+|\chi(s)|^{l_{1}}+|\varpi_{\Delta}(\chi(s))|^{l_{1}}\big)|\chi(s)-\varpi_{\Delta}(\chi(s))|^{\eta} ds\\&+L_{3}\int_{0}^{t\wedge\rho_{n}}\big(1+|\chi(s)|^{l_{2}}+|\varpi_{\Delta}(\chi(s))|^{l_{2}}\big)|\chi(s)-\varpi_{\Delta}(\chi(s))|
				,
			\end{split}
		\end{equation*}
		\begin{equation*}
			\begin{split}
				S_{13}=&
				\int_{0}^{t\wedge\rho_{n}}{U'}_{\Theta,\varepsilon}(\delta(s))\big[\alpha_{\Delta}(\varPhi(\chi_s))-\alpha_{\Delta}(\varPhi(\bar{\chi}_s))\big]ds\\
				=&\int_{0}^{t\wedge\rho_{*}}{U'}_{\Theta,\varepsilon}(\delta(s))\big[\alpha_{1,\Delta}(\chi(s))-\alpha_{1,\Delta}(\bar{\chi}(s))\big]ds
				\\&+\int_{0}^{t\wedge\rho_{*}}{U'}_{\Theta,\varepsilon}(\delta(s))\big[\alpha_{2,\Delta}(\chi(s))-\alpha_{2,\Delta}(\bar{\chi}(s))\big]ds\\
				&+\int_{0}^{t\wedge\rho_{*}}{U'}_{\Theta,\varepsilon}(\delta(s))\big[\alpha_{3}(\varPhi(\chi_s))-\alpha_{3}(\varPhi(\bar{\chi}_s))\big]ds\\
				\leq&L_{1}\int_{0}^{t\wedge\rho_{n}}\big(1+|\chi(s)|^{l_{1}}+|\bar{\chi}(s)|^{l_{1}}\big)|\chi(s)-\bar{\chi}(s)|^{\eta}ds\\&+L_{3}\int_{0}^{t\wedge\rho_{n}}\big(1+|\chi(s)|^{l_{2}}+|\bar{\chi}(s)|^{l_{2}}\big)|\chi(s)-\bar{\chi}(s)|ds\\
				&+L_{1}\int_{0}^{t\wedge\rho_{n}}\sum_{v=1}^{r}|\chi(s+\bar{s}_v)-\bar{\chi}(s+\bar{s}_v)|ds\\
				\leq&C\int_{0}^{t\wedge\rho_{n}}\big(1+|\chi(s)|^{l_{1}}+|\bar{\chi}(s)|^{l_{1}}\big)|\chi(s)-\bar{\chi}(s)|^{\eta}ds\\&+C\int_{0}^{t\wedge\rho_{n}}\big(1+|\chi(s)|^{l_{2}}+|\bar{\chi}(s)|^{l_{2}}\big)|\chi(s)-\bar{\chi}(s)|ds\\
				&+C\int_{0}^{t\wedge\rho_{n}}|\chi(s)-\bar{\chi}(s)|ds+C\int_{-\tau}^{0}|\xi(s)-\xi(\lfloor\frac{s}{\Delta}\rfloor\Delta)|ds.
			\end{split}
		\end{equation*}
		Then by Assumption \ref{a4} and (\ref{ym4.4}), we have
		
		\begin{equation*}
			\begin{split}
				S_{2}\leq& \frac{1}{2}\int_{0}^{t\wedge\rho_{n}}{U''}_{\Theta,\varepsilon}(\delta(s))|\beta(z(s))-\beta_{\Delta}(\bar{\chi}(s))|^2 ds\\
				\leq&C\int_{0}^{t\wedge\rho_{n}}{U''}_{\Theta,\varepsilon}(\delta(s))|\beta(z(s))-\beta(\chi(s))|^2 ds
				+C\int_{0}^{t\wedge\rho_{n}}{U''}_{\Theta,\varepsilon}(\delta(s))|\beta(\chi(s))-\beta_{\Delta}(\chi(s))|^2 ds\\
				&+C\int_{0}^{t\wedge\rho_{n}}{U''}_{\Theta,\varepsilon}(\delta(s))|\beta_{\Delta}(\chi(s))-\beta_{\Delta}(\bar{\chi}(s))|^2 ds\\
				\leq&
				C\big(\frac{\varepsilon^{2\sigma-1}}{\ln\Theta}\big)\int_{0}^{t\wedge\rho_{n}}\big(1+|z(s)|^{2l_{3}}+|\chi(s)|^{2l_{3}}\big)ds\\
				&+C\big(\frac{\Theta}{\varepsilon\ln\Theta}\big)\int_{0}^{t\wedge\rho_{n}}\big(1+|\chi(s)|^{2l_{3}}+|\bar{\chi}(s)|^{2l_{3}}\big)|\chi(s)-\bar{\chi}(s)|^{2\sigma} ds
				\\
				&+C\big(\frac{\Theta}{\varepsilon\ln\Theta}\big)\int_{0}^{t\wedge\rho_{n}}\big(1+|\chi(s)|^{2l_{3}}+|\varpi_{\Delta}(\chi(s))|^{2l_{3}}\big)|\chi(s)-\varpi_{\Delta}(\chi(s))|^{2\sigma} ds.
			\end{split}
		\end{equation*}
		Thanks to the It\^o isometry and Assumption \ref{a4}, we see
		\begin{equation*}
			\begin{split}
				\mathbb{E}\big[S_{3}\big]^2=&\mathbb{E}\Big[	\int_{0}^{t\wedge\rho_{n}}{U'}_{\Theta,\varepsilon}(\delta(s))[\beta(z(s))-\beta_{\Delta}(\bar{\chi}(s))]dB(s) \Big]^2\\
				=&\mathbb{E}	\int_{0}^{t\wedge\rho_{n}}|{U'}_{\Theta,\varepsilon}(\delta(s))|^2\big|\beta(z(s))-\beta_{\Delta}(\bar{\chi}(s))\big|^2ds\\ 
				\leq&
				C\mathbb{E}\int_{0}^{t\wedge\rho_{n}}\big(1+|z(s)|^{2l_{3}}+|\chi(s)|^{2l_{3}}\big)|z(s)-\chi(s)|^{2\sigma}ds\\&+C\mathbb{E}\int_{0}^{t\wedge\rho_{n}}\big(1+|\chi(s)|^{2l_{3}}+|\bar{\chi}(s)|^{2l_{3}}\big)|\chi(s)-\bar{\chi}(s)|^{2\sigma} ds
				\\
				&+C\mathbb{E}\int_{0}^{t\wedge\rho_{n}}\big(1+|\chi(s)|^{2l_{3}}+|\varpi_{\Delta}(\chi(s))|^{2l_{3}}\big)|\chi(s)-\varpi_{\Delta}(\chi(s))|^{2\sigma} ds.\\
			\end{split}
		\end{equation*}
		Hence, combining these inequalities with H\"older's inequality means that
		\begin{equation*}
			\begin{split}
				&\mathbb{E}|\delta(t\wedge\rho_{n})|^2
				\leq2\big[\varepsilon^2+\mathbb{E}U^2_{\Theta,\varepsilon}(\delta(t\wedge\rho_{n}))\big]\\
				\leq&C\big[\varepsilon^2+\int_{0}^{t}\mathbb{E}|\delta(s\wedge\rho_{n})|^2ds
				+\int_{0}^{t}\mathbb{E}\big(1+|\chi(s)|^{2l_{1}}+|\varpi_{\Delta}(\chi(s))|^{2l_{1}}\big)|\chi(s)-\varpi_{\Delta}(\chi(s))|^{2\eta} ds\\
				&+\int_{0}^{t}\mathbb{E}\big(1+|\chi(s)|^{2l_{2}}+|\varpi_{\Delta}(\chi(s))|^{2l_{2}}\big)|\chi(s)-\varpi_{\Delta}(\chi(s))|^2ds\\
				&+\int_{0}^{t}\mathbb{E}\big(1+|\chi(s)|^{2l_{1}}+|\bar{\chi}(s)|^{2l_{1}}\big)|\chi(s)-\bar{\chi}(s)|^{2\eta}ds\\&+\int_{0}^{t}\mathbb{E}\big(1+|\chi(s)|^{2l_{2}}+|\bar{\chi}(s)|^{2l_{2}}\big)|\chi(s)-\bar{\chi}(s)|^2ds+\int_{0}^{t}\mathbb{E}|\chi(s)-\bar{\chi}(s)|^2ds+\Delta^{2\gamma}\big]\\&
				+C\big(\frac{\varepsilon^{2\sigma-1}}{\ln\Theta}\big)^2\int_{0}^{t}\mathbb{E}\big(1+|z(s)|^{4l_{3}}+|\chi(s)|^{4l_{3}}\big)ds\\
				&+C\big(\frac{\Theta}{\varepsilon\ln\Theta}\big)^2\int_{0}^{t}\mathbb{E}\big(1+|\chi(s)|^{4l_{3}}+|\varpi_{\Delta}(\chi(s))|^{4l_{3}}\big)|\chi(s)-\varpi_{\Delta}(\chi(s))|^{4\sigma} ds\\
				&+C\big(\frac{\Theta}{\varepsilon\ln\Theta}\big)^2\int_{0}^{t}\mathbb{E}\big(1+|\chi(s)|^{4l_{3}}+|\bar{\chi}(s)|^{4l_{3}}\big)|\chi(s)-\bar{\chi}(s)|^{4\sigma} ds\\
				&	+C\mathbb{E}\int_{0}^{t\wedge\rho_{n}}\big(1+|z(s)|^{2l_{3}}+|\chi(s)|^{2l_{3}}\big)|z(s)-\chi(s)|^{2\sigma}ds\\
				&+C\int_{0}^{t}\mathbb{E}\big(1+|\chi(s)|^{2l_{3}}+|\varpi_{\Delta}(\chi(s))|^{2l_{3}}\big)|\chi(s)-\varpi_{\Delta}(\chi(s))|^{2\sigma} ds
				\\
				&+C\int_{0}^{t}\mathbb{E}\big(1+|\chi(s)|^{2l_{3}}+|\bar{\chi}(s)|^{2l_{3}}\big)|\chi(s)-\bar{\chi}(s)|^{2\sigma}ds.
			\end{split}
		\end{equation*}
		The key point is to estimate $\mathbb{E}\int_{0}^{t\wedge\rho_{n}}\big(1+|z(s)|^{2l_{3}}+|\chi(s)|^{2l_{3}}\big)|z(s)-\chi(s)|^{2\sigma}ds$.
		Let $\varPi\in(0,1)$ be arbitrary.
		Then the H\"older inequality leads to
		\begin{equation*}
			\begin{split}
				&	\mathbb{E}\int_{0}^{t\wedge\rho_{n}}\big(1+|z(s)|^{2l_{3}}+|\chi(s)|^{2l_{3}}\big)|z(s)-\chi(s)|^{2\sigma}ds\\
				\leq&	C\mathbb{E}\int_{0}^{t}\big(1+|z(s)|^{2l_{3}+2\sigma-1+\varPi}+|\chi(s)|^{2l_{3}+2\sigma-1+\varPi}\big)|z(s)-\chi(s)|^{1-\varPi}ds\\
				\leq&C\int_{0}^{t}\Big(\mathbb{E}\big(1+|z(s)|^\frac{2l_3+2\sigma-1+\varPi}{\varPi}+|\chi(s)|^\frac{2l_3+2\sigma-1+\varPi}{\varPi}\big)\Big)^{\varPi}\Big(\mathbb{E}|z(s)-\chi(s)|\Big)^{1-\varPi}ds\\
				\leq&C\int_{0}^{t}\Big(\mathbb{E}|z(s)-\chi(s)|\Big)^{1-\varPi}ds\\
				\leq&C\Big(\frac{1}{\ln\Delta^{-1}}+\big(\Delta^{\frac{1}{2}}\Gamma(\Delta)\big)^{\eta}+\Delta^{\frac{3}{8}}\Gamma(\Delta)+\Delta^{\gamma}\Big)^{1-\varPi}.
			\end{split}
		\end{equation*}
		By H\"older's inequality, Chebyshev's inequality and Lemmas \ref{lem3.1} and \ref{lem3.3}, we derive that
		\begin{equation*}
			\begin{split}
				&\int_{0}^{t}\mathbb{E}\big(1+|\chi(s)|^{4l_{3}}+|\varpi_{\Delta}(\chi(s))|^{4l_{3}}\big)|\chi(s)-\varpi_{\Delta}(\chi(s))|^{4\sigma} ds\\
				\leq&\int_{0}^{t}\Big(\mathbb{E}\big(1+|\chi(s)|^{p}+|\varpi_{\Delta}(\chi(s))|^{p}\big)\Big)^\frac{4l_{3}}{p}\Big(\mathbb{E}|\chi(s)-\varpi_{\Delta}(\chi(s))|^\frac{4\sigma p}{p-4l_{3}}\Big)^\frac{p-4l_{3}}{p}ds\\
				\leq&C\int_{0}^{t}\Big(\mathbb{E}\big[\mathbb{I}_{\{|\chi(s)|\geq h^{-1}(\Gamma(\Delta))\}}|\chi(s)|^\frac{4\sigma p}{p-4l_{3}}\big]\Big)^\frac{p-4l_{3}}{p}ds\\
				\leq&C\int_{0}^{t}\Big(\big(\mathbb{P}\{|\chi(s)|\geq h^{-1}(\Gamma(\Delta))\}\big)^\frac{p-4l_{3}-4\sigma}{p-4l_{3}}\big(\mathbb{E}|\chi(s)|^p\big)^\frac{4\sigma}{p-4l_{3}}\Big)^\frac{p-4l_{3}}{p}ds\\
				\leq&C\int_{0}^{t}\Big(\frac{\mathbb{E}|\chi(s)|^p}{(h^{-1}(\Gamma(\Delta)))^p}\Big)^\frac{p-4l_{3}-4\sigma}{p}ds\\
				\leq&C\Big(h^{-1}(\Gamma(\Delta))\Big)^{4l_{3}+4\sigma-p},
			\end{split}
		\end{equation*}
		\begin{equation*}
			\begin{split}
				\int_{0}^{t}\mathbb{E}\big(1+|\chi(s)|^{2l_{1}}+|\varpi_{\Delta}(\chi(s))|^{2l_{1}}\big)|\chi(s)-\varpi_{\Delta}(\chi(s))|^{2\eta} ds\leq C\Big(h^{-1}(\Gamma(\Delta))\Big)^{2l_{1}+2\eta-p},
			\end{split}
		\end{equation*} 
		\begin{equation*}
			\begin{split}
				\int_{0}^{t}\mathbb{E}\big(1+|\chi(s)|^{2l_{2}}+|\varpi_{\Delta}(\chi(s))|^{2l_{2}}\big)|\chi(s)-\varpi_{\Delta}(\chi(s))|^2ds\leq C\Big(h^{-1}(\Gamma(\Delta))\Big)^{2l_{2}+2-p},
			\end{split}
		\end{equation*}
		\begin{equation*}
			\begin{split}
				\int_{0}^{t}\mathbb{E}\big(1+|\chi(s)|^{2l_{3}}+|\varpi_{\Delta}(\chi(s))|^{2l_{3}}\big)|\chi(s)-\varpi_{\Delta}(\chi(s))|^{2\sigma} ds\leq C\Big(h^{-1}(\Gamma(\Delta))\Big)^{2l_{3}+2\sigma-p}.
			\end{split}
		\end{equation*}
				By H\"older's inequality and Lemma \ref{lem3.3}, it holds that
				\begin{equation*}
					\begin{split}
						&\int_{0}^{t}\mathbb{E}(1+|\chi(s)|^{2l_{1}}+|\bar{\chi}(s)|^{2l_{1}})|\chi(s)-\bar{\chi}(s)|^{2\eta}ds\\
						&\leq C\int_{0}^{t}\left(\mathbb{E}\big(1+|\chi(s)|^{4l_{1}}+|\bar{\chi}(s)|^{4l_{1}}\big)\right)^{\frac{1}{2}}\left(\mathbb{E}|\chi(s)-\bar{\chi}(s)|^{4\eta}\right)^{\frac{1}{2}}ds\leq C\big(\Delta^{\frac{1}{2}}\Gamma(\Delta)\big)^{{2\eta}},
					\end{split}
				\end{equation*}
				\begin{equation*}
					\begin{split}
						\int_{0}^{t}\mathbb{E}\big(1+|\chi(s)|^{2l_{2}}+|\bar{\chi}(s)|^{2l_{2}}\big)|\chi(s)-\bar{\chi}(s)|^2ds\leq C\big(\Delta^{\frac{1}{2}}\Gamma(\Delta)\big)^{2},
					\end{split}
				\end{equation*}
				\begin{equation*}
					\begin{split}
						\int_{0}^{t}\mathbb{E}|\chi(s)-\bar{\chi}(s)|^2ds\leq C\big(\Delta^{\frac{1}{2}}\Gamma(\Delta)\big)^{2},
					\end{split}
				\end{equation*}
				\begin{equation*}
					\begin{split}
						\int_{0}^{t}\mathbb{E}\big(1+|\chi(s)|^{4l_{3}}+|\bar{\chi}(s)|^{4l_{3}}\big)|\chi(s)-\bar{\chi}(s)|^{4\sigma} ds\leq C\big(\Delta^{\frac{1}{2}}\Gamma(\Delta)\big)^{4\sigma},
					\end{split}
				\end{equation*}
				\begin{equation*}
					\begin{split}
						\int_{0}^{t}\mathbb{E}\big(1+|\chi(s)|^{2l_{3}}+|\bar{\chi}(s)|^{2l_{3}}\big)|\chi(s)-\bar{\chi}(s)|^{2\sigma}ds\leq C\big(\Delta^{\frac{1}{2}}\Gamma(\Delta)\big)^{2\sigma}.
					\end{split}
				\end{equation*}
				By combining all these estimations with Gronwall's inequality,  we conclude that
				\begin{equation*}
					\begin{split}
						\mathbb{E}|\delta(t\wedge\rho_{n})|^2
						&\leq C\big[\varepsilon^2+\Big(h^{-1}(\Gamma(\Delta))\Big)^{2l_{1}+2\eta-p}+\Big(h^{-1}(\Gamma(\Delta))\Big)^{2l_2+2-p}+\big(\Delta^{\frac{1}{2}}\Gamma(\Delta)\big)^{{2\eta}\wedge{2\sigma}}+\Delta^{2\gamma}\big]					\\&
						+C\big(\frac{\varepsilon^{2\sigma-1}}{\ln\Theta}\big)^2
						+C\big(\frac{\Theta}{\varepsilon\ln\Theta}\big)^2\Big(h^{-1}(\Gamma(\Delta))\Big)^{4l_{3}+4\sigma-p}+C\big(\frac{\Theta}{\varepsilon\ln\Theta}\big)^2\big(\Delta^{\frac{1}{2}}\Gamma(\Delta)\big)^{4\sigma}\\
						&+C\Big(\frac{1}{\ln\Delta^{-1}}+\big(\Delta^{\frac{1}{2}}\Gamma(\Delta)\big)^{\eta}+\Delta^{\frac{3}{8}}\Gamma(\Delta)+\Delta^{\gamma}\Big)^{1-\varPi}
						+C\Big(h^{-1}(\Gamma(\Delta))\Big)^{2l_{3}+2\sigma-p}\\
						&\leq
						C\Big(\frac{1}{\ln\Delta^{-1}}+\big(\Delta^{\frac{1}{2}}\Gamma(\Delta)\big)^{\eta}+\Delta^{\frac{3}{8}}\Gamma(\Delta)+\Delta^{\gamma}\Big)^{1-\varPi}
						+C\Big(h^{-1}(\Gamma(\Delta))\Big)^{l*}\\
						&+
						C\left[\varepsilon^2
						+\big(\frac{\varepsilon^{2\sigma-1}}{\ln\Theta}\big)^2
						+\big(\frac{\Theta}{\varepsilon\ln\Theta}\big)^2\Big(h^{-1}(\Gamma(\Delta))\Big)^{4l_{3}+4\sigma-p}+\big(\frac{\Theta}{\varepsilon\ln\Theta}\big)^2\big(\Delta^{\frac{1}{2}}\Gamma(\Delta)\big)^{4\sigma}\right],
					\end{split}
				\end{equation*}
				where $l*=\left[(2l_1+2\eta-p)\vee (2l_2+2-p)\vee (2l_3+2\sigma-p)\right]$.
				
				If $\sigma=\frac{1}{2}$, choosing $\varepsilon=\frac{1}{\ln\Delta^{-1}}$ and $\Theta=\Delta^{-\frac{1}{8}}$ yields that 
				\begin{equation}\label{twoer2}
					\begin{split}
						\mathbb{E}\big|\delta(t\wedge\rho_{n})\big|^2\leq&C\Big(\frac{1}{\ln\Delta^{-1}}+\big(\Delta^{\frac{1}{2}}\Gamma(\Delta)\big)^{\eta}+\Delta^{\frac{3}{8}}\Gamma(\Delta)+\Delta^{\gamma}\Big)^{1-\varPi}
						\\&+C\Big(h^{-1}(\Gamma(\Delta))\Big)^{l*}
						+C\Delta^{-\frac{1}{4}}\Big(h^{-1}(\Gamma(\Delta))\Big)^{4l_{3}+4\sigma-p}.
					\end{split}
				\end{equation}
				
				If $\sigma\in(\frac{1}{2},1)$, choosing $\varepsilon=\Delta^\frac{1}{2}\Gamma(\Delta)$, $\Theta=2$ gives that 
				\begin{equation*}
					\begin{split}
						\mathbb{E}\big|\delta(t\wedge\rho_{n})\big|^2\leq&C\Big(\big(\Delta^{\frac{1}{2}}\Gamma(\Delta)\big)^{({2\sigma-1})\wedge{\eta}}
						+\Delta^\gamma\Big)^{1-\varPi}\\&+C\Big(h^{-1}(\Gamma(\Delta))\Big)^{l*}
						+C\big(\Delta^\frac{1}{2}\Gamma(\Delta)\big)^{-2}\Big(h^{-1}(\Gamma(\Delta))\Big)^{4l_{3}+4\sigma-p}.
					\end{split}
				\end{equation*}
				The desired  assertion follows from the Fatou lemma by letting $n\rightarrow\infty$.	
			\end{proof}
			\begin{rem}	
				When taking the value of $\varPi$ close to $0$, the optimal convergence rate in $\mathcal{L}^{2}$ sense is obtained.
			\end{rem}
			\begin{rem}
				We only show the convergence rates in $\mathcal{L}^{2}$ sense, but there in no doubt that we can get the results in $\mathcal{L}^{p}(p\geq 2)$ sense with the aid of the technique of Theorem 4.4 in \cite{29}. However, from Theorem 4.4 in \cite{29}, we see that the  convergence rate will not increase with the increase of $p$. Thus, we omit the case when $p>2$.	
			\end{rem}
			\section{Convergence rates over [0,T] in $\mathcal{L}^{1}$ and $\mathcal{L}^{2}$ sense}
			\begin{lem}\label{lemma4.4}
				Let Assumptions \ref{a1}-\ref{a3} hold. Suppose that $K>||\xi||$ and $\bar{p}\geq\left[l_1\vee l_2\vee (2l_3)\right]$. Let $\Delta\in(0,1]$ be sufficiently small such that $h^{-1}(\Gamma(\Delta))\geq K$. Then,
				\begin{equation*}
					\begin{split}
						\mathbb{E}\sup_{0\leq t\leq T}|z(t\wedge\rho_{*})-\chi(t\wedge\rho_{*})|
						&\leq \left\{\begin{array}{ll}
							C\Big(\frac{1}{\ln\Delta^{-1}}+\big(\Delta^{\frac{1}{2}}\Gamma(\Delta)\big)^{\eta}+\Delta^{\frac{3}{8}}\Gamma(\Delta)+\Delta^{\gamma}
							\Big)^\frac{1}{2}, & \text { if } \sigma=\frac{1}{2}, \\			
							~~~\\
							C\Big(\big(\Delta^{\frac{1}{2}}\Gamma(\Delta)\big)^{{(2\sigma-1)}\wedge{\eta}}
							+\Delta^\gamma\Big)^{2\sigma-1 }, & \text { if } \sigma\in(\frac{1}{2},1), 
						\end{array}\right.
					\end{split}
				\end{equation*}
				where $\rho_{*}:=\rho_k\wedge\rho_{\Delta,k}$.
			\end{lem}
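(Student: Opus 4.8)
The plan is to run the same Yamada--Watanabe computation that proves Lemma \ref{lemma4.2}, but now to retain the martingale term of It\^o's formula and control its running supremum by the Burkholder--Davis--Gundy (BDG) inequality, feeding Lemma \ref{lemma4.2} back in so that no Gronwall step is needed. Write $\delta(t)=z(t)-\chi(t)$; on $[0,t\wedge\rho_{*}]$ every argument of the coefficients has modulus $\le K\le h^{-1}(\Gamma(\Delta))$, so there the truncated coefficients agree with the genuine ones and $\varpi_{\Delta}$ acts trivially, exactly as in Lemma \ref{lemma4.2}. Applying It\^o's formula to $U_{\Theta,\varepsilon}(\delta(t\wedge\rho_{*}))$ gives $U_{\Theta,\varepsilon}(\delta(t\wedge\rho_{*}))=A(t)+\tfrac12 B(t)+M(t)$, with $A(t)=\int_{0}^{t\wedge\rho_{*}}U'_{\Theta,\varepsilon}(\delta(s))[\alpha(\varPhi(z_{s}))-\alpha(\varPhi(\bar{\chi}_{s}))]\,ds$, $B(t)=\int_{0}^{t\wedge\rho_{*}}U''_{\Theta,\varepsilon}(\delta(s))|\beta(z(s))-\beta(\bar{\chi}(s))|^{2}\,ds$, and $M(t)=\int_{0}^{t\wedge\rho_{*}}U'_{\Theta,\varepsilon}(\delta(s))[\beta(z(s))-\beta(\bar{\chi}(s))]\,dB(s)$. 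By (\ref{ym4.3}), $\mathbb{E}\sup_{0\le t\le T}|\delta(t\wedge\rho_{*})|\le\varepsilon+\mathbb{E}\sup_{0\le t\le T}A(t)+\tfrac12\mathbb{E}\sup_{0\le t\le T}B(t)+\mathbb{E}\sup_{0\le t\le T}|M(t)|$, and I would estimate the three terms separately, finally choosing $(\varepsilon,\Theta)$ exactly as in Lemma \ref{lemma4.2}: $\varepsilon=1/\ln\Delta^{-1}$, $\Theta=\Delta^{-1/8}$ when $\sigma=\tfrac12$, and $\varepsilon=\Delta^{1/2}\Gamma(\Delta)$, $\Theta=2$ when $\sigma\in(\tfrac12,1)$.

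For the drift, decompose $\alpha_{i}(z(s))-\alpha_{i}(\bar{\chi}(s))=[\alpha_{i}(z(s))-\alpha_{i}(\chi(s))]+[\alpha_{i}(\chi(s))-\alpha_{i}(\bar{\chi}(s))]$ as for $J_{1}$ in Lemma \ref{lemma4.2}. The key point is that the ``bad'' pieces are nonpositive or linear \emph{pointwise} in $s$: $U'_{\Theta,\varepsilon}(\delta(s))[\alpha_{1}(z(s))-\alpha_{1}(\chi(s))]\le0$ by monotonicity of $\alpha_{1}$, and $U'_{\Theta,\varepsilon}(\delta(s))[\alpha_{2}(z(s))-\alpha_{2}(\chi(s))]\le L_{1}|\delta(s)|$ by Assumption \ref{a1} and $|U'_{\Theta,\varepsilon}|\le1$; hence these bounds survive $\sup_{0\le t\le T}$, and the remaining pieces are bounded by their absolute values via (\ref{ym4.2}) and Assumptions \ref{a1}, \ref{a4}, \ref{a3}. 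Thus $\sup_{0\le t\le T}A(t)$ is dominated by $C\int_{0}^{T\wedge\rho_{*}}|\delta(s)|\,ds$ plus the very discretization quantities that appeared in $J_{1}$; after taking expectations one uses $\int_{0}^{T}\mathbb{E}|\delta(s\wedge\rho_{*})|\,ds\le T\cdot(\text{right-hand side of Lemma \ref{lemma4.2}})$ together with Lemma \ref{lem3.3}, so $\mathbb{E}\sup_{0\le t\le T}A(t)\le C\cdot(\text{right-hand side of Lemma \ref{lemma4.2}})$, no Gronwall required. Since $B$ is nondecreasing (its integrand is $\ge0$ by (\ref{ym4.4})), $\sup_{0\le t\le T}B(t)=B(T)$, and $\tfrac12\mathbb{E}B(T)$ coincides with $J_{2}$ of Lemma \ref{lemma4.2} evaluated at $t=T$, so with the above $(\varepsilon,\Theta)$ it is again bounded by $C\cdot(\text{right-hand side of Lemma \ref{lemma4.2}})$.

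The martingale term is the heart of the matter. By BDG and $|U'_{\Theta,\varepsilon}|\le1$, $\mathbb{E}\sup_{0\le t\le T}|M(t)|\le C\,\mathbb{E}\big(\int_{0}^{T\wedge\rho_{*}}|\beta(z(s))-\beta(\bar{\chi}(s))|^{2}\,ds\big)^{1/2}$, and by Assumption \ref{a4} the integrand is $\le C(1+|z(s)|^{2l_{3}}+|\bar{\chi}(s)|^{2l_{3}})(|\delta(s)|^{2\sigma}+|\chi(s)-\bar{\chi}(s)|^{2\sigma})$. The $|\chi-\bar{\chi}|^{2\sigma}$ part contributes, via H\"older and Lemma \ref{lem3.3}, at most $C(\Delta^{1/2}\Gamma(\Delta))^{\sigma}$, which is absorbed by the asserted bound. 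For the $|\delta|^{2\sigma}$ part I would peel off the superlinear prefactor by Cauchy--Schwarz, $\big(\mathbb{E}\sup_{0\le s\le T}(1+|z(s)|^{2l_{3}}+|\bar{\chi}(s)|^{2l_{3}})\big)^{1/2}\big(\mathbb{E}\int_{0}^{T\wedge\rho_{*}}|\delta(s)|^{2\sigma}\,ds\big)^{1/2}$, the first factor being finite by Lemmas \ref{eseb} and \ref{esnb} --- this is precisely why the hypothesis here is stated with $\bar p$ (rather than $p$) $\ge[l_{1}\vee l_{2}\vee(2l_{3})]$, since one needs the \emph{supremum} moments of $z$ and $\chi$, hence of $\bar{\chi}$, of order $2l_{3}$. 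When $\sigma=\tfrac12$ the second factor equals $\mathbb{E}\int_{0}^{T\wedge\rho_{*}}|\delta(s)|\,ds\le T\cdot(\text{right-hand side of Lemma \ref{lemma4.2}})$, giving the stated $\tfrac12$-power. When $\sigma\in(\tfrac12,1)$, I would write $|\delta(s)|^{2\sigma}\le(\sup_{0\le u\le T}|\delta(u\wedge\rho_{*})|)^{2\sigma-1}|\delta(s)|$, apply H\"older in $\omega$ with the conjugate exponents $1/(2\sigma-1)$ and $1/(2-2\sigma)$ (admissible since $2\sigma-1\in(0,1)$), then use Young's inequality to move $\tfrac12\mathbb{E}\sup_{0\le t\le T}|\delta(t\wedge\rho_{*})|$ to the left-hand side; a short interpolation of $\mathbb{E}|\delta(s\wedge\rho_{*})|^{q}$ between Lemma \ref{lemma4.2} and the moment bounds of Lemmas \ref{lem3.1}, \ref{lem3.3} then leaves the $(2\sigma-1)$-power of the Lemma \ref{lemma4.2} bound.

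Putting the three estimates together and absorbing the supremum term gives $\mathbb{E}\sup_{0\le t\le T}|\delta(t\wedge\rho_{*})|\le\varepsilon+C\cdot(\text{the asserted bound})$, and since the chosen $\varepsilon$ ($1/\ln\Delta^{-1}$ or $\Delta^{1/2}\Gamma(\Delta)$) is itself dominated by the asserted bound for $\Delta$ small, the lemma follows. I expect the only real difficulty to be the martingale estimate in the range $\sigma\in(\tfrac12,1)$: one must split $|\beta(z)-\beta(\bar{\chi})|^{2}$, keep the superlinear factors under control through the supremum moments rather than through a crude bound in terms of $K$, and organise the H\"older/Young step so that the exponent produced is exactly $2\sigma-1$; the case $\sigma=\tfrac12$ is comparatively routine once Lemma \ref{lemma4.2} is available.
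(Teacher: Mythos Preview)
Your approach is essentially the paper's: apply It\^o's formula to $U_{\Theta,\varepsilon}(\delta)$, bound the drift and second-order terms exactly as in Lemma \ref{lemma4.2}, handle the martingale via BDG, and feed the pointwise estimate of Lemma \ref{lemma4.2}/Theorem \ref{thm4.3} back in. Two points of comparison are worth noting. First, the paper does keep a Gronwall step: in its $D_{1}$ it writes $C\int_{0}^{t}\mathbb{E}\sup_{0\le u\le s}|\delta(u\wedge\rho_{*})|\,ds$ and closes with Gronwall, whereas your shortcut $\int_{0}^{T}\mathbb{E}|\delta(s\wedge\rho_{*})|\,ds\le T\cdot(\text{RHS of Lemma \ref{lemma4.2}})$ is equally valid and slightly more direct. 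Second, for the martingale when $\sigma\in(\tfrac12,1)$ the paper does not first peel off the superlinear prefactor by Cauchy--Schwarz; instead it writes $|\delta(s)|^{2\sigma}=|\delta(s)|\cdot|\delta(s)|^{2\sigma-1}$, pulls out one full power of $\sup|\delta|$ (so the BDG bound becomes $(\sup|\delta|)^{1/2}(\int(1+\cdots)|\delta|^{2\sigma-1})^{1/2}$), applies Young with exponents $(2,2)$ to absorb $\tfrac12\mathbb{E}\sup|\delta|$, and then bounds $\mathbb{E}\int(1+|z|^{2l_{3}}+|\chi|^{2l_{3}})|\delta|^{2\sigma-1}\le C\int(\mathbb{E}|\delta|)^{2\sigma-1}$ by a single H\"older step. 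This yields the $(2\sigma-1)$-power immediately, without any interpolation of higher moments of $\delta$; your route (extract $|\delta|^{2\sigma-1}$ as the sup, H\"older with $1/(2\sigma-1)$ and $1/(2-2\sigma)$, then interpolate $\mathbb{E}|\delta|^{q}$) is more circuitous and makes it harder to land exactly on the exponent $2\sigma-1$.
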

			\begin{proof}
				By It\^o's formula, for any $t\in[0,T]$, we know that
				\begin{equation*}
					\begin{split}
						\mathbb{E}\sup_{0\leq s\leq t}|\delta(s\wedge\rho_{*})|
						\leq&\varepsilon+\mathbb{E}\sup_{0\leq s\leq t}\int_{0}^{s\wedge\rho_{*}}{U'}_{\Theta,\varepsilon}(\delta(u))\big[\alpha(\varPhi(z_u))-\alpha_{\Delta}(\varPhi(\bar{\chi}_u))\big]du\\
						&+\frac{1}{2}\mathbb{E}\sup_{0\leq s\leq t}\int_{0}^{s\wedge\rho_{*}}{U''}_{\Theta,\varepsilon}(\delta(u))|\beta(z(u))-\beta_{\Delta}(\bar{\chi}(u))|^2 du\\
						&+\mathbb{E}\sup_{0\leq s\leq t}\int_{0}^{s\wedge\rho_{*}}{U'}_{\Theta,\varepsilon}(\delta(u))[\beta(z(u))-\beta_{\Delta}(\bar{\chi}(u))]dB(u)\\
						=:&\varepsilon+D_{1}+D_{2}+D_{3}.
					\end{split}
				\end{equation*}
				In a similar way to the proof in Lemma \ref{lemma4.2}, it is easy to see that
				\begin{equation*}
					\begin{split}
						D_{1}=&
						\mathbb{E}\sup_{0\leq s\leq t}\int_{0}^{s\wedge\rho_{*}}{U'}_{\Theta,\varepsilon}(\delta(u))\big[\alpha(\varPhi(z_u))-\alpha(\varPhi(\bar{\chi}_u))\big]du\\
						\leq&C\Big[\int_{0}^{t}\mathbb{E}\sup_{0\leq u\leq s}|\delta(u\wedge\rho_{*})|ds+\big(\Delta^\frac{1}{2}\Gamma(\Delta)\big)^{\eta}+\Delta^{\gamma}\Big],
					\end{split}
				\end{equation*}
				\begin{equation*}
					\begin{split}
						D_{2}
						&=\frac{1}{2}\mathbb{E}\sup_{0\leq s\leq t}\int_{0}^{s\wedge\rho_{*}}{U''}_{\Theta,\varepsilon}(\delta(u))|\beta(z(u))-\beta(\bar{\chi}(u))|^2 du\\
						&\leq C\left[\frac{\varepsilon^{2\sigma-1}}{\ln\Theta}+\big(\frac{\Theta}{\varepsilon\ln\Theta}\big)\big(\Delta^\frac{1}{2}\Gamma(\Delta)\big)^{2\sigma}\right].
					\end{split}
				\end{equation*}
				By (\ref{ym4.2}), Assumption \ref{a4}, BDG's inequality and H\"older's inequality, we have
				\begin{equation*}
					\begin{split}
						D_{3}
						\leq& C\mathbb{E}\Big(\int_{0}^{t\wedge\rho_{*}}|\beta(z(s))-\beta(\bar{\chi}(s))|^{2}ds\Big)^\frac{1}{2}\\
						\leq&C\mathbb{E}\Big(\int_{0}^{t\wedge\rho_{*}}\big(1+|z(s)|^{2l_{3}}+|\chi(s)|^{2l_{3}}\big)|z(s)-\chi(s)|^{2\sigma}ds\Big)^\frac{1}{2}\\
						&+C\mathbb{E}\Big(\int_{0}^{t\wedge\rho_{*}}\big(1+|\chi(s)|^{2l_{3}}+|\bar{\chi}(s)|^{2l_{3}}\big)|\chi(s)-\bar{\chi}(s)|^{2\sigma}ds\Big)^\frac{1}{2}.\\
					\end{split}
				\end{equation*}
				
				When $\sigma=\frac{1}{2}$, by Young's inequality and Lemmas \ref{eseb}, \ref{esnb}, we see that
				\begin{equation*}
					\begin{split}
						D_{3}
						\leq&C\Big(1+\mathbb{E}\sup_{0\leq s\leq t}|z(s)|^{2l_3}+\mathbb{E}\sup_{0\leq s\leq t}|\chi(s)|^{2l_3}\Big)^\frac{1}{2}\Big(\int_{0}^{t}\mathbb{E}|z(s)-\chi(s)|ds\Big)^\frac{1}{2}\\
						&+C\Big(1+\mathbb{E}\sup_{0\leq s\leq t}|\chi(s)|^{2l_3}\Big)^\frac{1}{2}\Big(\int_{0}^{t}\mathbb{E}|\chi(s)-\bar{\chi}(s)|ds\Big)^\frac{1}{2}\\
						\leq&C\Big(\mathbb{E}|\delta(t)|\Big)^\frac{1}{2}+\Big(\Delta^\frac{1}{2}\Gamma(\Delta)\Big)^\frac{1}{2}.
					\end{split}
				\end{equation*}	
				By applying Gronwall's inequality  and choosing $\varepsilon=\frac{1}{\ln\Delta^{-1}}$, $\Theta=\Delta^{-\frac{1}{8}}$, we get from Theorem \ref{thm4.3} that
			$$	\mathbb{E}\sup_{0\leq s\leq t}|\delta(s\wedge\rho_{*})|
						\leq C\Big(\frac{1}{\ln\Delta^{-1}}+\big(\Delta^{\frac{1}{2}}\Gamma(\Delta)\big)^{\eta}+\Delta^{\frac{3}{8}}\Gamma(\Delta)+\Delta^{\gamma}
						\Big)^\frac{1}{2}.$$
				
				When $\sigma\in(\frac{1}{2},1)$, by Young's inequality, H\"older's inequality and Lemmas \ref{eseb}, \ref{esnb}, we derive that
				\begin{equation*}
					\begin{split}
						D_{3}
						\leq&C\mathbb{E}\Big[\big(\sup_{0\leq s\leq t}|\delta(s\wedge\rho_{*})|\big)^\frac{1}{2}\Big(\int_{0}^{t}\big(1+|z(s)|^{2l_{3}}+|\chi(s)|^{2l_{3}}\big)|z(s)-\chi(s)|^{2\sigma-1}ds\Big)^\frac{1}{2}\Big]\\
						&+C\mathbb{E}\Big(\int_{0}^{t}\big(1+|\chi(s)|^{2l_{3}}+|\bar{\chi}(s)|^{2l_{3}}\big)|\chi(s)-\bar{\chi}(s)|^{2\sigma}ds\Big)^\frac{1}{2}\\
						\leq&\frac{1}{2}\mathbb{E}\sup_{0\leq s\leq t}|\delta(s\wedge\rho_{*})|+C\mathbb{E}\int_{0}^{t}\big(1+|z(s)|^{2l_{3}}+|\chi(s)|^{2l_{3}}\big)|z(s)-\chi(s)|^{2\sigma-1}ds\\
						&+\Big(1+\mathbb{E}\sup_{0\leq s\leq t}|\chi(s)|^{2l_3}\Big)^\frac{1}{2}\Big(\int_{0}^{t}\mathbb{E}|\chi(s)-\bar{\chi}(s)|^{2\sigma}ds\Big)^\frac{1}{2}\\
						\leq&\frac{1}{2}\mathbb{E}\sup_{0\leq s\leq t}|\delta(s\wedge\rho_{*})|+C\int_{0}^{t}\big(\mathbb{E}|z(s)-\chi(s)|\big)^{2\sigma-1}ds+C\Big(\int_{0}^{t}\mathbb{E}|\chi(s)-\bar{\chi}(s)|^{2\sigma}ds\Big)^\frac{1}{2}.
					\end{split}
				\end{equation*}
				Due to Gronwall's inequality and Theorem \ref{thm4.3}, choosing $\varepsilon=\Delta^\frac{1}{2}\Gamma(\Delta)$, $\Theta=2$ means that
				\begin{equation*}
					\begin{split}
						\mathbb{E}\sup_{0\leq s\leq t}|\delta(s\wedge\rho_{*})|
						\leq C\Big(\big(\Delta^{\frac{1}{2}}\Gamma(\Delta)\big)^{{(2\sigma-1)}\wedge{\eta}}
						+\Delta^\gamma\Big)^{2\sigma-1}.
					\end{split}
				\end{equation*}
			\end{proof}

			\begin{thm}
				Let the assumptions in Lemma \ref{lemma4.4} hold. If  $\Gamma(\Delta)\geq h(\big(\Delta^\frac{1}{2}\Gamma(\Delta)\big)^{-1})$ holds for every sufficiently small $\Delta\in(0,1]$, then we have
				\begin{equation*}
					\begin{split}
						\mathbb{E}\sup_{0\leq t\leq T}|z(t)-\chi(t)|\leq \left\{\begin{array}{ll}
							C\Big(\frac{1}{\ln\Delta^{-1}}+\big(\Delta^{\frac{1}{2}}\Gamma(\Delta)\big)^{\eta}+\Delta^{\frac{3}{8}}\Gamma(\Delta)+\Delta^{\gamma}
							\Big)^\frac{1}{2}, & \text { if } \sigma=\frac{1}{2}, \\			
							~~~\\	C\Big(\big(\Delta^{\frac{1}{2}}\Gamma(\Delta)\big)^{{(2\sigma-1)}\wedge{\eta}}
							+\Delta^\gamma\Big)^{2\sigma-1 }, & \text { if } \sigma\in(\frac{1}{2},1).
						\end{array}\right.
					\end{split}
				\end{equation*}
				
			\end{thm}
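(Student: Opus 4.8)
The plan is to imitate the proof of Theorem~\ref{thm4.3}, but with the pathwise estimate of Lemma~\ref{lemma4.4} in place of the pointwise one of Lemma~\ref{lemma4.2}. Put $\delta(t)=z(t)-\chi(t)$ and recall $\rho_*=\rho_k\wedge\rho_{\Delta,k}$. The first step is to split according to whether the exit time exceeds $T$:
$$\mathbb{E}\sup_{0\le t\le T}|\delta(t)|=\mathbb{E}\Big[\sup_{0\le t\le T}|\delta(t)|\,\mathbb{I}_{\{\rho_*>T\}}\Big]+\mathbb{E}\Big[\sup_{0\le t\le T}|\delta(t)|\,\mathbb{I}_{\{\rho_*\le T\}}\Big].$$
On $\{\rho_*>T\}$ one has $t\wedge\rho_*=t$ for every $t\in[0,T]$, hence $\sup_{0\le t\le T}|\delta(t)|\,\mathbb{I}_{\{\rho_*>T\}}\le\sup_{0\le t\le T}|\delta(t\wedge\rho_*)|$, and the first term is controlled by Lemma~\ref{lemma4.4} precisely by the right-hand side of the claimed inequality (in each of the regimes $\sigma=\tfrac12$ and $\sigma\in(\tfrac12,1)$).

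For the second term I would use the Young inequality $ab\le\tfrac{\lambda}{2}a^2+\tfrac{1}{2\lambda}b^2$ with $a=\sup_{0\le t\le T}|\delta(t)|$ and $b=\mathbb{I}_{\{\rho_*\le T\}}$, which gives
$$\mathbb{E}\Big[\sup_{0\le t\le T}|\delta(t)|\,\mathbb{I}_{\{\rho_*\le T\}}\Big]\le\frac{\lambda}{2}\,\mathbb{E}\sup_{0\le t\le T}|\delta(t)|^2+\frac{1}{2\lambda}\,\mathbb{P}(\rho_*\le T).$$
Here $\mathbb{E}\sup_{0\le t\le T}|\delta(t)|^2\le C$ follows from Lemmas~\ref{eseb} and~\ref{esnb} (the lower bound on $\bar p$ in Lemma~\ref{lemma4.4} ensures the $\mathcal{L}^2$ supremum moments of $z$ and $\chi$ are finite), while $\mathbb{P}(\rho_*\le T)\le C/K^2$ by Lemma~\ref{lemma4.1}; so this term is at most $\tfrac{C\lambda}{2}+\tfrac{C}{2\lambda K^2}$.

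It then remains to choose the parameters. Taking $\lambda=\Delta^{1/2}\Gamma(\Delta)$ and $K=\big(\Delta^{1/2}\Gamma(\Delta)\big)^{-1}$ balances the two contributions and yields $\tfrac{C\lambda}{2}+\tfrac{C}{2\lambda K^2}=C\Delta^{1/2}\Gamma(\Delta)$. As in Theorem~\ref{thm4.3} one checks that these choices are admissible: $K>\|\xi\|$ for all sufficiently small $\Delta$ since $\Delta^{1/2}\Gamma(\Delta)\le L_0\Delta^{1/4}\to0$, and $h^{-1}(\Gamma(\Delta))\ge K$ because the standing hypothesis $\Gamma(\Delta)\ge h\big((\Delta^{1/2}\Gamma(\Delta))^{-1}\big)=h(K)$ gives $h^{-1}(\Gamma(\Delta))\ge K$ through the increasing inverse $h^{-1}$. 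Since $\Delta^{1/2}\Gamma(\Delta)\to0$ and the exponent $1$ strictly exceeds each (sub-$1$) exponent occurring inside the Lemma~\ref{lemma4.4} bound — e.g. $(\Delta^{1/2}\Gamma(\Delta))^{1}\le(\Delta^{3/8}\Gamma(\Delta))^{1/2}$ when $\sigma=\tfrac12$, and $(\Delta^{1/2}\Gamma(\Delta))^{1}\le C\big((\Delta^{1/2}\Gamma(\Delta))^{(2\sigma-1)\wedge\eta}+\Delta^\gamma\big)^{2\sigma-1}$ when $\sigma\in(\tfrac12,1)$ — the extra term $C\Delta^{1/2}\Gamma(\Delta)$ is absorbed, leaving exactly the asserted estimate.

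I do not expect a genuine obstacle here once Lemmas~\ref{lemma4.4}, \ref{lemma4.1}, \ref{eseb} and \ref{esnb} are available; the only points needing care are the bookkeeping checks that the parameter choices $\lambda=\Delta^{1/2}\Gamma(\Delta)$, $K=(\Delta^{1/2}\Gamma(\Delta))^{-1}$ satisfy $K>\|\xi\|$ and $h^{-1}(\Gamma(\Delta))\ge K$, and the verification that the $\Delta^{1/2}\Gamma(\Delta)$ remainder is dominated by the principal rate coming from Lemma~\ref{lemma4.4}. (The companion $\mathcal{L}^2$-over-$[0,T]$ bound would be obtained the same way, replacing $a^2$ by a higher power in the Young step and correspondingly strengthening the moment requirement on $p$.)
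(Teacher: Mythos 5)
Your proposal is correct and follows essentially the same route as the paper: split on $\{\rho_*>T\}$ versus $\{\rho_*\le T\}$, apply Young's inequality with the moment bound and $\mathbb{P}(\rho_*\le T)\le C/K^2$ from Lemma \ref{lemma4.1}, choose $\lambda=\Delta^{1/2}\Gamma(\Delta)$ and $K=(\Delta^{1/2}\Gamma(\Delta))^{-1}$, and invoke Lemma \ref{lemma4.4} for the localized term. Your extra checks (that $h^{-1}(\Gamma(\Delta))\ge K$ and that the $\Delta^{1/2}\Gamma(\Delta)$ remainder is absorbed by the principal rate) are exactly the bookkeeping the paper leaves implicit.
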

			
			\begin{proof}
				For $\lambda>0$, using Young's inequality leads to
				\begin{equation*}
					\begin{split}
						\mathbb{E}\sup_{0\leq s\leq t}|\delta(s)|\leq&\mathbb{E}\sup_{0\leq s\leq t}|\delta(s)\mathbb{I}_{\{\rho_{*}>T\}}|+\frac{C\lambda}{2}+\frac{C}{2\lambda K^2}.
					\end{split}
				\end{equation*}
				We know 	$h^{-1}\big(\Gamma(\Delta)\big)\geq\big(\Delta^\frac{1}{2}\Gamma(\Delta)\big)^{-1}$,
				since $\Gamma(\Delta)\geq h\Big(\big(\Delta^\frac{1}{2}\Gamma(\Delta)\big)^{-1}\Big)$. Choosing $\lambda=\Delta^\frac{1}{2}\Gamma(\Delta)$, $K=\big(\Delta^\frac{1}{2}\Gamma(\Delta)\big)^{-1}$ yields that
				\begin{equation*}
					\begin{split}
						\mathbb{E}\sup_{0\leq s\leq t}|\delta(s)|\leq&\mathbb{E}\sup_{0\leq s\leq t}|\delta(s)\mathbb{I}_{\{\rho_{*}>T\}}|+\Delta^\frac{1}{2}\Gamma(\Delta).
					\end{split}
				\end{equation*}	
				Thereupon, the desired result can be obtained.
			\end{proof}
			
			\begin{thm}
				Let the assumptions in Theorem \ref{LL11} hold. Then		
				\begin{equation*}
					\begin{split}
						\mathbb{E}\sup_{0\leq t\leq T}|z(t)-\chi(t)|^2\leq
						\left\{\begin{array}{ll}
							C\Big(\Xi_1(\Delta)+\Xi_2(\Delta)
							+\Delta^{-\frac{1}{4}}\Xi_3(\Delta)
							\Big), & \text {if }  \sigma=\frac{1}{2}, \\   
							~~~\\ C\Big(\Xi_4(\Delta)+\Xi_2(\Delta)
							+\big(\Delta^\frac{1}{2}\Gamma(\Delta)\big)^{-2}\Xi_3(\Delta)\Big), & \text {if }  \sigma\in(\frac{1}{2},1),
						\end{array}\right.
					\end{split}
				\end{equation*}
				where the notations are all the same as those in Theorem \ref{LL11}.
			\end{thm}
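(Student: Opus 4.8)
The plan is to run the argument of Theorem~\ref{LL11} almost verbatim, the single genuinely new ingredient being Doob's (equivalently Burkholder--Davis--Gundy's) $\mathcal{L}^{2}$ maximal inequality for the stochastic integral term. First I would fix an integer $n>\|\xi\|$, put $\delta(t)=z(t)-\chi(t)$, and introduce the stopping time $\rho_{n}=\inf\{t\in[0,T]:|z(t)|\vee|\chi(t)|\geq n\}$ as in Theorem~\ref{LL11}. Applying It\^o's formula to $U_{\Theta,\varepsilon}(\delta(\cdot\wedge\rho_{n}))$ and using the decomposition $U_{\Theta,\varepsilon}(\delta(s\wedge\rho_{n}))=S_{1}(s)+S_{2}(s)+S_{3}(s)$ with $S_{1}=S_{11}+S_{12}+S_{13}$ exactly as there (here $S_{3}(s)=\int_{0}^{s\wedge\rho_{n}}U'_{\Theta,\varepsilon}(\delta(u))[\beta(z(u))-\beta_{\Delta}(\bar{\chi}(u))]\,dB(u)$ is a stopped martingale in $s$), I would note that $U_{\Theta,\varepsilon}\geq0$ and that the $\alpha_{1}$-monotonicity contribution to $S_{11}$ is nonpositive for every $s$ (by the sign argument used for $J_{11}$ in Lemma~\ref{lemma4.2}); discarding it gives, for all $s$, the pointwise bound $U_{\Theta,\varepsilon}(\delta(s\wedge\rho_{n}))\leq|\widetilde{S}_{1}(s)|+S_{2}(s)+|S_{3}(s)|$, where $\widetilde{S}_{1}$ gathers the remaining (one-sided Lipschitz and delay) pieces of $S_{1}$. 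Combined with $(\ref{ym4.3})$ this yields
$$\mathbb{E}\sup_{0\leq s\leq t}|\delta(s\wedge\rho_{n})|^{2}\leq2\varepsilon^{2}+C\,\mathbb{E}\Big(\sup_{0\leq s\leq t}|\widetilde{S}_{1}(s)|\Big)^{2}+C\,\mathbb{E}\Big(\sup_{0\leq s\leq t}S_{2}(s)\Big)^{2}+C\,\mathbb{E}\sup_{0\leq s\leq t}|S_{3}(s)|^{2}.$$

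Next I would estimate the three terms. After invoking Assumptions~\ref{a1}--\ref{a3}, $(\ref{ym4.2})$--$(\ref{ym4.4})$ and the truncation bound $|\alpha_{1,\Delta}|\vee|\alpha_{2,\Delta}|\vee|\beta_{\Delta}|\leq\Gamma(\Delta)$, the integrands of $\widetilde{S}_{1}$ and $S_{2}$ are dominated in absolute value by nonnegative integrands; since those integrals are monotone in the upper limit, $\sup_{0\leq s\leq t}|\widetilde{S}_{1}(s)|$ and $\sup_{0\leq s\leq t}S_{2}(s)$ are each bounded by the corresponding integral over $[0,t\wedge\rho_{n}]$. Squaring and applying the Cauchy--Schwarz inequality in time---which costs only a factor $T$ and merely doubles exponents already present---reproduces precisely the quantities handled in Theorem~\ref{LL11}: the Gronwall term $C\int_{0}^{t}\mathbb{E}\sup_{0\leq u\leq s}|\delta(u\wedge\rho_{n})|^{2}\,ds$ from the one-sided Lipschitz part of $\alpha_{2}$ and from $\alpha_{3}$; the truncation-error terms of order $(h^{-1}(\Gamma(\Delta)))^{l*}$ and $(\Theta/(\varepsilon\ln\Theta))^{2}(h^{-1}(\Gamma(\Delta)))^{4l_{3}+4\sigma-p}$ obtained via Chebyshev's inequality and Lemmas~\ref{lem3.1} and \ref{lem3.3}; and the time-discretisation terms of order $(\Delta^{1/2}\Gamma(\Delta))^{2\eta\wedge2\sigma}$, $(\Theta/(\varepsilon\ln\Theta))^{2}(\Delta^{1/2}\Gamma(\Delta))^{4\sigma}$ from Lemma~\ref{lem3.3}, together with the $(\varepsilon^{2\sigma-1}/\ln\Theta)^{2}$ term from the $U''$-estimate. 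For the martingale term, Doob's inequality gives $\mathbb{E}\sup_{0\leq s\leq t}|S_{3}(s)|^{2}\leq4\,\mathbb{E}|S_{3}(t\wedge\rho_{n})|^{2}$, and by It\^o's isometry and $(\ref{ym4.2})$ the right-hand side is exactly the expression already estimated in Theorem~\ref{LL11}; in particular $\mathbb{E}\int_{0}^{t\wedge\rho_{n}}(1+|z(s)|^{2l_{3}}+|\chi(s)|^{2l_{3}})|z(s)-\chi(s)|^{2\sigma}\,ds$ is controlled by the H\"older trick with exponent $\varPi$ and Theorem~\ref{thm4.3}, producing the $(1-\varPi)$-powered bundle $\Xi_{1}(\Delta)$ (resp.\ $\Xi_{4}(\Delta)$).

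Collecting everything, I would arrive at $\mathbb{E}\sup_{0\leq s\leq t}|\delta(s\wedge\rho_{n})|^{2}\leq C\,\mathcal{R}(\Delta,\varepsilon,\Theta)+C\int_{0}^{t}\mathbb{E}\sup_{0\leq u\leq s}|\delta(u\wedge\rho_{n})|^{2}\,ds$, where $\mathcal{R}(\Delta,\varepsilon,\Theta)$ is the very remainder obtained in the proof of Theorem~\ref{LL11}. Gronwall's inequality then removes the integral term, and Fatou's lemma as $n\to\infty$ (legitimate since $\rho_{n}\to T$ a.s.\ by Lemmas~\ref{eseb} and \ref{esnb}) transfers the bound to $\mathbb{E}\sup_{0\leq t\leq T}|\delta(t)|^{2}$. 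Choosing $\varepsilon=1/\ln\Delta^{-1}$, $\Theta=\Delta^{-1/8}$ when $\sigma=\tfrac12$, and $\varepsilon=\Delta^{1/2}\Gamma(\Delta)$, $\Theta=2$ when $\sigma\in(\tfrac12,1)$, exactly as in Theorem~\ref{LL11}, then delivers the two asserted estimates with the same $\Xi_{1},\dots,\Xi_{4}$.

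I expect the only delicate point to be the sign bookkeeping in the first paragraph: one must split off the monotone $\alpha_{1}$-part before taking absolute values, so that the remaining non-sign-definite pieces of the drift never force moments beyond those already assumed. Once this is done, Doob's inequality collapses the maximal stochastic-integral term to a quantity estimated at the endpoint, and the Cauchy--Schwarz-in-time step only re-creates exponents already controlled in Theorem~\ref{LL11}; hence the hypotheses of Theorem~\ref{LL11} suffice verbatim, with no new moment bounds for $z$ or $\chi$ beyond Lemmas~\ref{lem3.1}, \ref{lem3.3}, \ref{eseb} and \ref{esnb}.
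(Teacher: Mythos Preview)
Your proposal is correct and follows essentially the same approach as the paper. The paper's own proof is extremely terse---it merely notes that BDG's inequality yields
\[
\mathbb{E}\sup_{0\leq s\leq t}\Big[\int_{0}^{s\wedge\rho_{n}}{U'}_{\Theta,\varepsilon}(\delta(u))[\beta(z(u))-\beta_{\Delta}(\bar{\chi}(u))]\,dB(u)\Big]^{2}\leq C\,\mathbb{E}\int_{0}^{t\wedge\rho_{n}}|{U'}_{\Theta,\varepsilon}(\delta(s))|^{2}\big|\beta(z(s))-\beta_{\Delta}(\bar{\chi}(s))\big|^{2}\,ds
\]
and then declares that the remainder ``resembles the proof of Theorem~\ref{LL11}.'' Your write-up fills in exactly the details the paper omits: the pointwise monotonicity in the upper limit for the bounded drift and second-variation integrals, the nonpositivity of the $\alpha_{1}$-contribution (allowing it to be discarded before taking absolute values), and the observation that Doob/BDG reduces the maximal stochastic-integral term to the same endpoint quantity estimated via It\^o's isometry in Theorem~\ref{LL11}. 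The Gronwall--Fatou conclusion and the choices of $\varepsilon,\Theta$ are identical to the paper's.
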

			\begin{proof}	
				An application of BDG's inequality leads to
				\begin{equation*}
					\begin{split}
						&\mathbb{E}\sup_{0\leq s\leq t}\Big[ \int_{0}^{s\wedge\rho_{n}}{U'}_{\Theta,\varepsilon}(\delta(u))[\beta(z(u))-\beta_{\Delta}(\bar{\chi}(u))]dB(u) \Big]^2
						\leq C\mathbb{E} \int_{0}^{t\wedge\rho_{n}}|{U'}_{\Theta,\varepsilon}(\delta(s))|^2\big|\beta(z(s))-\beta_{\Delta}(\bar{\chi}(s))\big|^2ds.\\ 
					\end{split}
				\end{equation*}
				Then the proof process resembles the proof of Theorem \ref{LL11}, so we omit it. 	
			\end{proof}

			\begin{rem}
				There is no doubt that the four main conclusions in this paper can be generalized to the corresponding nonautonomous equations.
				For the processing technology of nonautonomous equations, please refer to \cite{xin3}.
			\end{rem}

			\begin{rem}\label{muldel}
				From the conclusions, we observe that the number of delay variables only exerts influence on the constant $C$ but not  impacts the convergence rates, which is checked in the numerical experiment in Section 6.
			\end{rem}

			\section{Stochastic volatility model and its numerical experiment}
			
			As is known to all, the stock prices always fluctuate, which will make an impact on the values of financial products. So investors constantly look for a financial instrument that reduces the volatility of stock prices. 
			Under this background, VIX  plays a key role, which  is a measure of overall market volatility. 
			From {\cite{60}}, we see that: when the market is volatile and the price fluctuates sharply, the VIX  tends to rise; when the market rises slowly in a long-term bull market, the VIX remains low and stable.
			There is a general  volatility model used to value the volatility of financial products and capture the dynamic changes of the VIX:
			$$dV(t)=\big(c_{1}+\frac{c_{2}}{V(t)}+c_{3}\frac{V(t)}{\ln V(t)}+c_{4}V(t)+c_{5}V^{2}(t)\big)dt+kV^{\kappa}(t)dB(t).$$
			Here, $V$ represents the volatility and $k, c_{j}$ are constants for $j=1,2,\cdots,5$. And the value of $\kappa$ is an important feature to distinguish different stochastic volatility models \cite{60}.
			From \cite{61}, by taking $\kappa=3/2, c_{1}={c_{2}}=c_{3}=0$, it becomes:
			$$dV(t)=\big(c_{4}V(t)+c_{5}V^{2}(t)\big)dt+kV^{\frac{3}{2}}(t)dB(t),$$
			which is one of the best models tested for describing the behavior of the VIX, and can explain the phenomenon of the finance theory in a simple way. Then the risk-neutral process follows:
			\begin{equation}\label{VIX}
				dV(t)=\big(c_{4}V(t)+c_{5}V^{2}(t)-\lambda^{*}kV^{\frac{3}{2}}(t)\big)dt+kV^{\frac{3}{2}}(t)dB(t).
			\end{equation}
			For more details about stochastic volatility models and their numerical schemes, please refer to \cite{xin27,60,61,xin26} and references therein.
			
			Next, as described in the introduction, we perform the TEMS for the  following scalar MDSDE (by choosing appropriate constants in (\ref{vv2})):
			\begin{equation}\label{vq2}
				\begin{split}
					dz(t)=&\big(-3z(t)|z(t)|-4z^{3}(t)+2z(t)+z(t-1)
					+3z(t-0.25)-z(t)|z(t)|^{\frac{1}{2}}\big)dt+|z(t)|^{\frac{3}{2}}dB(t),
				\end{split}
			\end{equation}
			with the initial data $\xi(t)=|t|^\frac{1}{2}+2$, $t\in[-1,0]$.
			Obviously, $\alpha_1(z_1)=-z_1|z_1|^{\frac{1}{2}}, \alpha_2(z_1)=-3z_1|z_1|-4z_1^{3},\alpha_3(z_1,z_2,z_3)=2z_1+3z_2+z_3$.
			One can easily see that Assumptions \ref{a1}-\ref{a3} hold. 
			We verify that the first inequality of Assumption \ref{a1} holds with $l_{1}=1$ and $\eta=\frac{1}{2}$, then we give the proof process of the second inequality of Assumption \ref{a1} in detail:
			\begin{equation*}
				\begin{split}
					&(z_1-y_1)(\alpha_2(z_1)-\alpha_2(y_1))\\
					&=(z_1-y_1)(-3z_1|z_1|-4z_1^{3}+3y_1|y_1|+4y_1^{3})\\
					&\leq -3(|z_1|+|y_1|)(|z_1|-|y_1|)^2-4|z_1-y_1|^2(z_1^2+z_1y_1+y_1^2)\\
					&\leq -4|z_1-y_1|^2|z_1^2+y_1^2|+4|z_1-y_1|^2|z_1y_1|\\
					&\leq |z_1-y_1|^2,
				\end{split}
			\end{equation*}
			where the inequality $(|a|+|b|)(|a|-|b|)^2\leq (a-b)(a|a|-b|b|), \forall a,b\in \mathbb{R}$ has been used.
			Define $h(w)=8w^3$ and $\Gamma(\Delta)=\Delta^{-\frac{1}{4}}$. The two functions meet the requirements (\ref{tru}) and (\ref{truncated}), and $h^{-1}\big(\Gamma(\Delta)\big)=\Delta^{-1/12}/2$.  Since the analytical solution is difficult to be expressed, we use the numerical solution with stepsize $\Delta=2^{-13}$ as the analytical
			solution. Then the $\mathcal{L}^1$-convergence rate can be computed by the mean error between the exact solution and numerical solution with stepsize $\Delta=2^{-12}, 2^{-11}, 2^{-10}, 2^{-9}$ at $T=2$ of 500 independent trajectories. The convergence rate, which is about 0.125, is shown in Figure \ref {tu1}.

Moreover, to check the conclusion in Remark \ref{muldel}, we perform the experiment for the following examples.
\begin{equation}\label{mul1}
	\begin{split}
		dz(t)=&\big(-4z^{3}(t)+z(t)+z(t-2^{-9})\big)dt+|z(t)|^{\frac{1}{2}}dB(t).
	\end{split}
\end{equation}
\begin{equation}\label{mul2}
	\begin{split}
		dz(t)=&\big(-4z^{3}(t)+z(t)+\sum_{j=1}^{256}z(t-j\cdot 2^{-9})\big)dt+|z(t)|^{\frac{1}{2}}dB(t).
	\end{split}
\end{equation}
\begin{equation}\label{mul3}
	\begin{split}
		dz(t)=&\big(-4z^{3}(t)+z(t)+\sum_{j=1}^{512}z(t-j\cdot 2^{-9})\big)dt+|z(t)|^{\frac{1}{2}}dB(t).
	\end{split}
\end{equation}
We see that there are 1 delay, 256 delays, 512 delays in (\ref{mul1}), (\ref{mul2}), (\ref{mul3}), respectively.
Note the result in \cite{xin29}:
\begin{equation}\label{mumuer}
	\begin{split}
		\log err_{\Delta}\approx \log C +p^* \log \Delta,
	\end{split}
\end{equation}
where $p^*$ is the convergence rate.

From Figure \ref{tu2}, we can observe that:
as the number of delay variables increases, the error increases, but the convergence rate hardly changes. 
By (\ref{mumuer}), the numerical experiment is consistent with the theoretical analysis.

\begin{figure}[htbp]
	\centering
	\begin{minipage}{0.49\linewidth}
		\centering
		\includegraphics[width=0.9\linewidth]{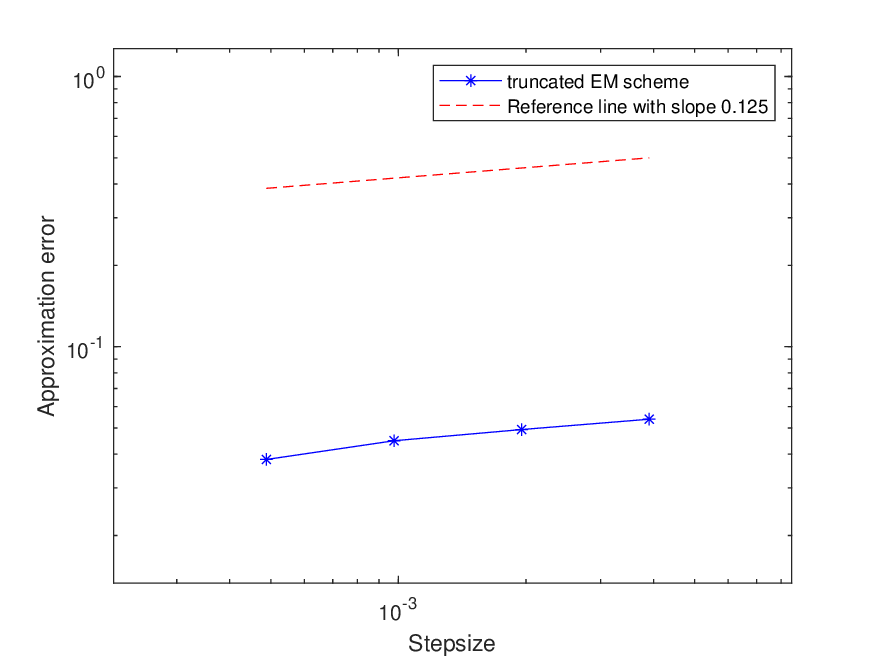}
		\caption{\label{tu1} Convergence rate of TEMS for (\ref{vq2})}
	\end{minipage}
	\begin{minipage}{0.49\linewidth}
		\centering
		\includegraphics[width=0.9\linewidth]{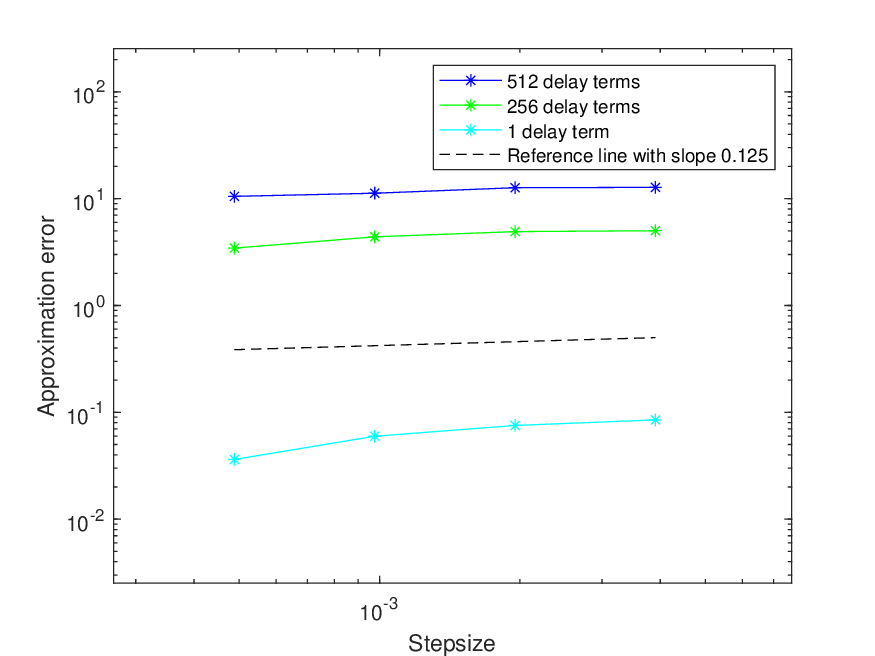}
		\caption{\label{tu2} Convergence rates of TEMS for (\ref{mul1})-(\ref{mul3})}
	\end{minipage}
\end{figure}



\begin{thebibliography}{33}

	\bibitem{xin13} A. Ahlborn, U. Parlitz, Stabilizing unstable steady states using multiple delay feedback control, Phys. Rev. Lett. 93 (2014) 264101.

\bibitem{xin20} A. Alfonsi, On the discretization schemes for the CIR (and Bessel squared) processes, Monte Carlo Methods Appl. 11 (2005) 355-384.

\bibitem{1} E. Allen, Modeling with It\^o Stochastic Differential Equations, Springer, Dordrecht, 2007.





\bibitem{56} D.F. Anderson, D.J. Higham, Y. Sun, Multilevel Monte Carlo for stochastic differential equations with small noise, SIAM J. Numer. Anal. 54 (2) (2016) 505-529.

\bibitem{2} L. Arnold, Stochastic Differential Equations: Theory and Applications, John Wiley, New York, 1974.




\bibitem{3} C.T.H. Baker, E. Buckwar, Numerical analysis of explicit one-step methods for stochastic delay differential equations, J. Comput. Math. 3 (2000) 315-335.

\bibitem{xin12} J. Bao, X. Huang, Approximations of McKean-Vlasov stochastic differential equations with irregular coefficients, J. Theoret. Probab. 35 (2022) 1187-1215.

\bibitem{xin18} J. Bao, X. Huang, C. Yuan, Convergence rate of Euler-Maruyama scheme for SDEs with H\"older-Dini continuous drifts, J. Theoret. Probab. 32 (2019) 848-871.

\bibitem{57} W.J. Beyn, E. Isaak, R. Kruse, Stochastic C-stability and B-consistency of explicit and implicit Euler-type schemes, J. Sci. Comput. 67 (3) (2016) 955-987.


\bibitem{xin22} S.J. Brown, P.H. Dybvig, The empirical implications of the Cox, Ingersoll, Ross theory of the term structure of interest rates, The Journal of Finance 41 (3) (1986) 617-630.


\bibitem{xin24} R.R. Chen, L. Scott, Pricing interest rate options in a two-factor Cox-Ingersoll-Ross model of the term structure, The review of financial studies 5 (4) (1992) 613-636.


\bibitem{xin27} A. Cozma, M. Mariapragassam, C. Reisinger, Convergence of an Euler scheme for a hybrid stochastic-local volatility model with stochastic rates in foreign exchange markets, SIAM J. Financial Math. 9 (1) (2018) 127-170.

\bibitem{xin17} S. Deng, C. Fei, W. Fei, X. Mao, Tamed EM schemes for neutral stochastic differential delay equations with superlinear diffusion coefficients, J. Comput. Appl. Math. 388 (2021) 113269.

\bibitem{xin21} S. Dereich, A. Neuenkirch, L. Szpruch, An Euler-type method for the
strong approximation of the Cox-Ingersoll-Ross process, Proc. R. Soc. A Math. Phys. Eng. Sci. 468 (2012) 1105-1115.

\bibitem{xin14} C. Fei, W. Fei, X. Mao, M. Shen, L. Yan, Stability analysis of highly nonlinear hybrid multiple-delay stochastic differential equations, J. Appl. Anal. Comput. 9 (2019) 1053-1070.

\bibitem{5} W. Fei, L. Hu, X. Mao, D. Xia, Advances in the truncated Euler-Maruyama method for stochastic differential delay equations, Commun. Pure Appl. Anal. 19 (4) (2020) 2081-2100.

\bibitem{6} S. Gao, J. Hu, L. Tan, C. Yuan, Strong convergence rate of truncated Euler-Maruyama method for stochastic differential delay equations with Poisson jumps, Front. Math. China 16 (2021) 395-423.

\bibitem{7} X. Gao, Y. Liu, Y. Wang, H. Yang, M. Yang, Tamed-Euler method for nonlinear switching diffusion systems with locally H\"older diffusion coefficients, Chaos Solitons Fractals 151 (2021) 111224.
\bibitem{60} J. Goard, M. Mazur, Stochastic volatility models and the pricing of VIX options, Math. Finance. 23 (3) (2013) 439-458.

\bibitem{8} Q. Guo, W. Liu, X. Mao, R. Yue, The partially truncated Euler-Maruyama method and its stability and boundedness, Appl. Numer. Math. 115 (2017) 235-251.

\bibitem{9} Q. Guo, X. Mao, R. Yue, The truncated Euler-Maruyama method for stochastic differential delay equations, Numer. Algorithms 78 (2) (2018) 599-624.

\bibitem{xin88} I. Gy\"ongy, A note on Euler's approximations, Potential Anal. 8 (1998) 205-16.

\bibitem{xin1} I. Gy\"ongy, M. R\'{a}sonyi, A note on Euler approximations for SDEs with H\"older continuous diffusion coefficients, Stochastic Process. Appl. 121 (10) (2011) 2189-2200.



\bibitem{xin29} D.J. Higham, An algorithmic introduction to numerical simulation of stochastic differential equations, SIAM Rev. 43 (3) (2001) 525-546.


\bibitem{12} L. Hu, X. Li, X. Mao, Convergence rate and stability of the truncated Euler-Maruyama method for stochastic differential equations, J. Comput. Appl. Math. 337 (2018) 274-289.

\bibitem{10} X. Huang, X. Wang, Path dependent McKean-Vlasov SDEs with H\"older continuous diffusion, Discrete Contin. Dyn. Syst. Ser. S, 16 (5) (2023) 982-998.




\bibitem{13} M. Hutzenthaler, A. Jentzen, P.E. Kloeden, Strong and weak divergence in finite time of Euler's method for stochastic differential equations with non-globally Lipschitz continuous coefficients, Proc. R. Soc. A, Math. Phys. Eng. Sci. 467 (2011) 1563-1576.

\bibitem{14} M. Hutzenthaler, A. Jentzen, P.E. Kloeden, Strong convergence of an explicit numerical method for SDEs with nonglobally Lipschitz continuous coefficients, Ann. Appl. Probab. 22 (4) (2012) 1611-1641.

\bibitem{xin32} T.T. Kieu, D.T. Luong, H.L. Ngo,  Tamed-adaptive Euler-Maruyama approximation for SDEs with locally Lipschitz continuous drift and locally H\"older continuous diffusion coefficients, Stoch. Anal. Appl. 40 (4) (2022) 714-734.

\bibitem{50} P.E. Kloeden, E. Platen, Numerical Solution of Stochastic Differential Equations, Springer, Berlin, 1992.




\bibitem{61} A.L. Lewis, Option Valuation under Stochastic Volatility: with Mathematica Code, Finance Press, 2000.

\bibitem{15} X. Li, X. Mao, G. Yin, Explicit numerical approximations for stochastic differential equations in finite and infinite horizons: truncation methods, convergence in p th moment and stability, IMA J. Numer. Anal. 39 (2) (2019) 847-892.

\bibitem{xin15} X. Li, Q. Zhu, D. O'Reganc, pth Moment exponential stability of impulsive stochastic functional differential equations and application to control problems of NNs, J. Franklin Inst. 351 (2014) 4435-4456.


\bibitem{xin31} H. Liu, B. Shi, F. Wu, Tamed Euler-Maruyama approximation of McKean-Vlasov stochastic differential equations with super-linear drift and H\"older diffusion coefficients, Appl. Numer. Math. 183 (2023) 56-85.

\bibitem{18} M. Liu, W. Cao, Z. Fan, Convergence and stability of the semi-implicit Euler method for a linear stochastic differential delay equation, J. Comput. Appl. Math. 170 (2004) 255-268.

\bibitem{55} W. Liu, X. Mao, Strong convergence of the stopped Euler-Maruyama method for nonlinear stochastic differential equations, Appl. Math. Comput. 223 (2013) 389-400.

\bibitem{xin3} W. Liu, X. Mao, J. Tang, Y. Wu, Truncated Euler-Maruyama method for classical and time-changed non-autonomous stochastic differential equations, Appl. Numer. Math. 153 (2020) 66-81.

\bibitem{xin26} R. Lord, R. Koekkoek, D.V. Dijk, A comparison of biased simulation schemes for stochastic volatility models, Quant. Finance 10 (2) (2010) 177-194.






\bibitem{20} X. Mao, Stochastic Differential Equations and Applications, 2nd ed., Horwood, Chichester, 2007.


\bibitem{22}	X. Mao, The truncated Euler-Maruyama method for stochastic differential equations, J. Comput. Appl. Math. 290 (2015) 370-384.

\bibitem{23}	X. Mao, Convergence rates of the truncated Euler-Maruyama method for stochastic differential equations, J. Comput. Appl. Math. 296 (2016) 362-375.


\bibitem{52} X. Mao, L. Szpruch, Strong convergence and stability of implicit numerical methods for stochastic differential equations with non-globally Lipschitz continuous coefficients, J. Comput. Appl. Math. 238 (2013) 14-28. 

\bibitem{53} M. Milo\v{s}evi\'{c}, Implicit numerical methods for highly nonlinear neutral stochastic differential equations with time-dependent delay, Appl. Math. Comput. 244 (2014) 741-760.


\bibitem{x00xinx2} M. Milo\v{s}evi\'{c}, M. Jovanovi\'{c},  An application of Taylor series in the approximation of solutions to stochastic differential equations with time-dependent delay, J. Comput. Appl. Math. 235 (15) (2011) 4439-4451.




\bibitem{25} G.N. Milstein, E. Platen, H. Schurz, Balanced implicit methods for stiff stochastic system, SIAM J. Numer. Anal. 35 (1998) 1010-1019.
\bibitem{16} H.L. Ngo, D.T. Luong, Tamed Euler-Maruyama approximation for stochastic differential equations with locally H\"older continuous diffusion coefficients, Statist. Probab. Lett. 145 (2019) 133-140.



\bibitem{28} H.L. Ngo, D.T. Luong, Strong rate of tamed Euler-Maruyama approximation for stochastic differential equations with H\"older continuous diffusion coefficient, Braz. J. Probab. Stat. (2017) 24-40.

\bibitem{58} H.L. Ngo, D. Taguchi, On the Euler-Maruyama approximation for one-dimensional stochastic differential equations with irregular coefficients, IMA J. Numer. Anal. 37 (4) (2017) 1864-1883.

\bibitem{xin23} L. Overbeck, T. Ryden, Estimation in the cox-ingersoll-ross model, Econometric Theory 13 (3) (1997) 430-461.

\bibitem{x00xinx3} A. Petrov\'{c}, M. Milo\v{s}evi\'{c}, The truncated Euler-Maruyama method for highly nonlinear neutral stochastic differential equations with time-dependent delay, Filomat 35 (7) (2021) 2457-2484.




\bibitem{xin16} A. Rathinasamy, M. Balachandran, Mean-square stability of semi-implicit Euler method for linear stochastic differential equations with multiple delays and Markovian switching, Appl. Math. Comput. 206 (2008) 968-979.



\bibitem{26} S. Sabanis, A note on tamed Euler approximations, Electron. Commun. Probab. 18 (2013) 1-10.




\bibitem{51} L. Szpruch, X. Mao, D.J. Higham, J. Pan, Numerical simulation of a strongly nonlinear Ait-Sahalia-type interest rate model, BIT 51(2) (2011) 405-425.

\bibitem{xin2} T. Yamada, S. Watanabe, On the uniqueness of solutions of stochastic differential equations, J. Math. Kyoto Univ. 11 (1971) 155-167.



\bibitem{xin19} B. Yan, The Euler scheme with irregular coefficients, Ann. Probab. 30 (2002) 1172-1194.





\bibitem{31} H. Yang, J. Huang, Convergence and stability of modified partially truncated Euler-Maruyama method for nonlinear stochastic differential equations with H\"older
continuous diffusion coefficient, J. Comput. Appl. Math, 404 (2022) 113895.


\bibitem{29} H. Yang, F. Wu, P.E. Kloeden, X. Mao, The truncated Euler-Maruyama method for stochastic differential equations with H\"older diffusion coefficients, J. Comput. Appl. Math. 366 (2020) 112379.



\bibitem{xin11} X. Zhang, A discretized version of Krylovs estimate and its applications, Electron. J. Probab. 24 (2019) 1-17.

\end{thebibliography}
\end{document}